\newtheorem{thm}{Theorem}[section]
\newtheorem{lemma}[thm]{Lemma}
\newtheorem{cor}[thm]{Corollary}
\newtheorem{rem}[thm]{Remark}
\newcommand{\be}{\begin{equation}}
\newcommand{\ee}{\end{equation}}
\newcommand{\bea}{\begin{eqnarray}}
\newcommand{\eea}{\end{eqnarray}}
\newcommand{\bean}{\begin{eqnarray*}}
\newcommand{\eean}{\end{eqnarray*}}
\def\non{\nonumber}
\long\def\delete#1{}
\definecolor{VeryLightBlue}{rgb}{0.9,0.9,1}
\definecolor{LightBlue}{rgb}{0.8,0.8,1}
\definecolor{MidBlue}{rgb}{0.5,0.5,1}
\definecolor{DarkBlue}{rgb}{0,0,0.6}
\definecolor{Blue}{rgb}{0,0,1}
\definecolor{Gold}{rgb}{1,0.843,0}
\definecolor{LightGreen}{rgb}{0.88,1,0.88}
\definecolor{MidGreen}{rgb}{0.6,1,0.6}
\definecolor{DarkGreen}{rgb}{0,0.6,0}
\definecolor{VeryLightYellow}{rgb}{1,1,0.9}
\definecolor{LightYellow}{rgb}{1,1,0.6}
\definecolor{MidYellow}{rgb}{1,1,0.5}
\definecolor{DarkYellow}{rgb}{1,1,0.2}
\definecolor{DarkPurple}{rgb}{.6,0,1}
\definecolor{Red}{rgb}{1,0,0}
\definecolor{VeryLightRed}{rgb}{1,0.9,0.9}
\definecolor{LightRed}{rgb}{1,0.8,0.8}
\definecolor{MidRed}{rgb}{1,0.55,0.55}
\def\Cay{{\rm Cay}}
\def\s{\sigma}
\def\Z{\mathbb{Z}}
\begin{document}

\title
{\Large \sc \bf Group distance magic and antimagic graphs}
\date{}
\author{S. Cichacz\\
{\small AGH University of Science and Technology, Faculty of Applied Mathematics,}\\
{\small Al. Mickiewicza 30, 30-059 Krak\'{o}w, Poland}\\
{\small Email: cichacz@agh.edu.pl}\\
D. Froncek\\
{\small Department of Mathematics and Statistics, University of Minnesota Duluth,}\\ 
{\small 1117 University Dr., Duluth, MN 55812-3000, U.S.A.}\\
{\small Email: dalibor@d.umn.edu} \\
K. Sugeng\\
{\small Department of Mathematics, Faculty of Mathematics and Natural Sciences,}\\
{\small University of Indonesia, Kampus UI Depok, Depok 16424, Indonesia}\\
{\small Email: kiki@sci.ui.ac.id}\\
Sanming Zhou\thanks{Corresponding author.}\\
{\small School of Mathematics and Statistics, The
University of Melbourne,}\\ 
{\small Parkville, VIC 3010, Australia}\\
{\small Email: sanming@unimelb.edu.au}}

\openup 0.5\jot
\maketitle
\date

\begin{abstract}
Given a graph $G$ with $n$ vertices and an Abelian group $A$ of order $n$, an {$A$-distance antimagic labelling} of $G$ is a bijection from $V(G)$ to $A$ such that the vertices of $G$ have pairwise distinct weights, where the weight of a vertex is the sum (under the operation of $A$) of the labels assigned to its neighbours. An {$A$-distance magic labelling} of $G$ is a bijection from $V(G)$ to $A$ such that the weights of all vertices of $G$ are equal to the same element of $A$. In this paper we study these new labellings under a general setting with a focus on product graphs. We prove among other things several general results on group antimagic or magic labellings for Cartesian, direct and strong products of graphs. As applications we obtain several families of graphs admitting group distance antimagic or magic labellings with respect to elementary Abelian groups, cyclic groups or direct products of such groups.

\medskip
\textbf{Keywords:}~Distance magic labelling; distance antimagic labelling; group labelling

\medskip
\textbf{2010 Mathematics Subject Classification:}~05C78
\end{abstract}

\section{Introduction}
 
A {\em distance magic labelling} of a graph $G = (V(G), E(G))$ with $|V(G)|=n$ is a bijection from $V(G)$ to $\{1,2,\dots,n\}$ such that the weights of all vertices are equal to the same constant called the {\em magic constant}, where the {\em weight} of a vertex is the sum of the labels of its neighbours. A {\em distance antimagic labelling} of $G$ is a bijection from $V(G)$ to $\{1,2,\dots,n\}$ such that different vertices have distinct weights. A graph that admits a distance magic (antimagic) labelling is called a {\em distance magic (antimagic) graph}. Motivated by the construction of magic squares, distance magic labelling received considerable attention in recent years. The reader is referred to \cite{afk} for a survey on distance magic labelling, \cite{BM,wal} for various kinds of magic and antimagic labelling problems for graphs, and \cite{gal} for a dynamic survey of graph labellings.

The purpose of the present paper is to study distance magic and antimagic labelling by using elements of Abelian groups. Given a graph $G$ with $n$ vertices and an Abelian group $A$ of order $n$, we define an {\em $A$-distance magic labelling} of $G$ to be a bijection $f$ from $V(G)$ to $A$ such that the weight of every vertex $x \in V(G)$ is equal to the same element of $A$, called the {\em magic constant} of $G$ with respect to $f$. Here the \emph{weight} of $x$ under $f$ is defined as
$$
w_f(x) = \sum_{y \in N_G(x)} f(y)
$$
with the understanding that the addition is performed in the group $A$, where $N_G(x)$ is the (open) neighbourhood of $x$ in $G$. (We usually write $w(x)$ in place of $w_f(x)$ and $N(x)$ instead of $N_G(x)$ if there is no risk of confusion.) We define an \emph{$A$-distance antimagic labelling} of $G$ to be a bijection $f$ from $V(G)$ to $A$ such that the weights of all vertices of $G$ under $f$ are pairwise distinct. 
Obviously, every distance magic graph with $n$ vertices admits a $\Z_n$-distance magic labelling, but the converse is not necessarily true. It is also clear that under any $A$-distance antimagic labelling every element of $A$ occurs as the weight of precisely one vertex of $G$. 

In the special case when the group involved is cyclic, the concept of group distance magic labelling was introduced by the second author \cite{Fr}. The present paper represents the first attempt towards a systematic study of group distance magic and group distance antimagic labellings of graphs under a general setting. The theme for these new labellings is the following question for various families of graph-group pairs $(G, A)$ with $|V(G)| = |A|$: Under what conditions does $G$ admit an $A$-distance antimagic/magic labelling?
  
The main results in this paper are as follows. In Section \ref{sec:nec-suff} we give a necessary condition for a graph with an even number of vertices to be distance antimagic with respect to an Abelian group with a unique involution (Theorem \ref{thm:r-n-even}). We then give sufficient conditions for a Cayley graph on an Abelian group to be distance antimagic or magic with respect to the same group (Theorem \ref{obs:cay}), and discuss the consequences of these results to Cayley graphs on elementary Abelian groups.

In Section \ref{sec:cart} we study group distance antimagic and magic labellings of Cartesian products of graphs. Among other things we prove two general results, Theorems \ref{thm:cart-prod-gen} and \ref{thm:cart-prod-gen1}, which provide machineries for constructing new group distance antimagic or magic graphs. These can be applied to many special cases, and we illustrate this by a few corollaries with a focus on hypercubes, Hamming graphs and Cartesian products of cycles.

In Section \ref{sec:dir} we prove a general result (Theorem \ref{gr_pr}) that can be used to construct group distance antimagic graphs by means of direct products of graphs. We then give a few families of such graphs in four corollaries.

In Section \ref{sec:Zn} we focus on distance antimagic graphs with respect to cyclic groups $\Z_n$. We obtain several general results (Theorems \ref{thm:bal}, \ref{thm:dir}, \ref{thm:str} and \ref{Knregular}) for Cartesian, direct and strong products, and we then use them to give concrete families of $\Z_n$-distance antimagic groups in a few corollaries. In Section \ref{sec:circulant} we give two sufficient conditions for a circulant graph with $n$ vertices to be antimagic with respect to $\Z_n$. We conclude the paper with remarks and open problems.

We notice that graph labelling by Abelian groups has been studied in the literature under different settings. For example, in \cite{ComNelPal} vertex-magic and edge-magic total labellings of graphs by Abelian groups were studied. In \cite{CCN}, edge-magic total-labellings of countably infinite graphs by $\Z$ were investigated. In \cite{LL, LS, SLS}, vertex-magic edge-labellings of graphs by Abelian groups were studied.  

We refer the reader to \cite{Scott} for group-theoretic terminology. We use $A_1 \times \cdots \times A_d$ to denote the direct product of groups $A_1, \ldots, A_d$. In particular, $\Z_{q}^{d} = \Z_q \times \cdots \times \Z_q$ ($d$ factors), and when $q=p$ is a prime $\Z_{p}^d$ is an elementary Abelian $p$-group. As usual we write the operation of an Abelian group $A$ additively, denote its identity element by $0$, and write the inverse element of $x \in A$ by $-x$. All groups in the paper are finite and Abelian, and all graphs considered are finite, simple and undirected. As usual the cardinality of a set $X$ is denoted by $|X|$.

\section{A necessary condition, and sufficient conditions for Cayley graphs}
\label{sec:nec-suff}

A well-known result \cite{Scott} in group theory asserts that a finite group contains involutions (that is, elements with order 2) if and only if it is of even order. Denote $\s(A) = \sum_{x \in A} x$. Then $\s(A)$ is equal to the sum of the involutions of $A$. The following lemma will be used in the proof of Theorem \ref{thm:r-n-even}.

\begin{lemma}(\cite[Lemma 8]{ComNelPal})
\label{lem:inv} Let $A$  be a finite Abelian group.
\begin{itemize}
  \item[\rm (a)] If $A$ has exactly one involution, say, $a$, then $\s(A)= a$.
  \item[\rm (b)] If $A$ has no involutions, or more than one involution, then $\s(A)=0$.
\end{itemize}
\end{lemma}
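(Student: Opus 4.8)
The plan is to build everything on the involution-pairing observation already recorded above the statement, namely that $\s(A)$ equals the sum of the involutions of $A$. Recall why this holds: in the sum $\sum_{x\in A} x$ one pairs each element $x$ with its inverse $-x$; whenever $x\neq -x$ the two cancel, so the only surviving contributions come from the elements satisfying $x=-x$, i.e.\ $2x=0$. These are precisely the identity (contributing $0$) together with the involutions, whence $\s(A)$ equals the sum of all involutions $a$. With this reduction in hand, both parts become statements about the sum of the involutions alone.

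Part (a) is then immediate: if $a$ is the unique involution, the sum over involutions is the single term $a$, so $\s(A)=a$. For part (b), the case of no involutions is likewise immediate, since the sum over an empty set of involutions is $0$. The substantive case, and the one I expect to be the main obstacle, is when $A$ has more than one involution. Here I would first invoke the standard fact that $H=\{x\in A: 2x=0\}$ is a subgroup of $A$ in which every nonidentity element has order $2$; thus $H$ is an elementary Abelian $2$-group, $H\cong \Z_2^{\,k}$, and the hypothesis of at least two involutions forces $k\ge 2$. Since $\s(A)$ is the sum of the nonidentity elements of $H$, it equals $\sum_{x\in H}x$.

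To evaluate this last sum I would fix one involution $a$ and pair each $x\in H$ with $x+a$; because $2x=0$ each such pair sums to $x+(x+a)=a$, and these $2^{k-1}$ pairs partition $H$, giving $\s(A)=2^{k-1}a$. As $k\ge 2$ the coefficient $2^{k-1}$ is even and $2a=0$, so $\s(A)=0$, completing the proof. The only delicate point is this final evaluation: one must check that the pairing $x\mapsto\{x,x+a\}$ genuinely partitions $H$ into two-element blocks (it does, since $a\neq 0$ guarantees $x\neq x+a$), and that the parity of $2^{k-1}$ is invoked correctly. An alternative to the pairing argument is a direct coordinate computation in $\Z_2^{\,k}$, where each coordinate sums to $2^{k-1}\equiv 0\pmod 2$ once $k\ge 2$; either route disposes of the multiple-involution case.
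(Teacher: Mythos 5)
Your proof is correct; the paper itself gives no proof of this lemma (it cites \cite[Lemma 8]{ComNelPal} and merely records the key reduction that $\s(A)$ equals the sum of the involutions of $A$), and your argument fills in exactly that outline: the inverse-pairing observation, then the fixed-point-free pairing $x\mapsto x+a$ inside the elementary Abelian $2$-subgroup $H=\{x\in A: 2x=0\}$ to show the sum of all involutions vanishes once there are at least two of them. Nothing further is needed.
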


\begin{thm}
\label{thm:r-n-even}
Let $G$ be an $r$-regular graph on $n$ vertices, where $n$ is even. Then for any Abelian group $A$ of order $n$ with exactly one involution, $G$ cannot be $A$-distance antimagic unless $r$ is odd.
\end{thm}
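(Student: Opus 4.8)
The plan is to compute the sum of all vertex weights in two different ways and compare the results. Suppose, for a contradiction, that $f \colon V(G) \to A$ is an $A$-distance antimagic labelling. Since $|V(G)| = |A| = n$ and the weights $w_f(x)$ are by assumption pairwise distinct, the map $x \mapsto w_f(x)$ must be a bijection from $V(G)$ onto $A$. Hence
$$
\sum_{x \in V(G)} w_f(x) = \sum_{a \in A} a = \s(A).
$$

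First I would evaluate the same sum by interchanging the order of summation. Writing out $w_f(x) = \sum_{y \in N(x)} f(y)$, each label $f(y)$ is counted once for every vertex $x$ with $y \in N(x)$, i.e.\ once for each of the $r$ neighbours of $y$, using that $G$ is $r$-regular. Therefore
$$
\sum_{x \in V(G)} w_f(x) = r \sum_{y \in V(G)} f(y) = r\,\s(A),
$$
the last equality holding because $f$ is a bijection onto $A$. Comparing the two evaluations yields $(r-1)\,\s(A) = 0$ in $A$.

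To finish I would invoke Lemma \ref{lem:inv}(a): since $A$ has exactly one involution $a$, we have $\s(A) = a$, where $a$ satisfies $2a = 0$ and $a \neq 0$, so $a$ has additive order exactly $2$. The relation $(r-1)a = 0$ then forces $2 \mid (r-1)$, that is, $r$ is odd. Contrapositively, if $r$ is even then no such labelling $f$ can exist, which is precisely the claim.

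I do not anticipate a serious obstacle here: the argument is a clean double-counting identity combined with the structural fact about $\s(A)$ supplied by Lemma \ref{lem:inv}. The only points requiring care are the bookkeeping in the order-swap — ensuring each label is weighted by the common degree $r$ — and the elementary observation that an involution has additive order exactly $2$, so that $(r-1)a = 0$ is equivalent to the stated parity condition on $r$.
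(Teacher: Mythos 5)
Your proposal is correct and follows essentially the same route as the paper: both compute $\sum_{x} w_f(x)$ once as $\s(A)$ (weights form a bijection onto $A$) and once as $r\,\s(A)$ (double counting via $r$-regularity), then invoke Lemma \ref{lem:inv}(a) to conclude. The only cosmetic difference is that you phrase the conclusion as $(r-1)\s(A)=0$ forcing $r$ odd, whereas the paper assumes $r$ even and derives the contradiction $\s(A)=0$; these are the same argument.
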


\begin{proof}
Let $A$ be an Abelian group of order $n$ with exactly one involution, say, $a$.
Suppose $r$ is even and $G$ is $A$-distance antimagic with $f$ the desired labelling. Denote $w(G)=\sum_{x \in V(G)}w(x)$. Since $f$ is an $A$-antimagic labelling, we have $w(G)=\s(A)$. On the other hand, since $G$ is $r$-regular, $w(G)=\sum_{x \in V(G)}\left(\sum_{y \in N(x)}f(y)\right)=r\s(A)$. Hence $r\s(A) = \s(A)$. Since $r$ is even and by Lemma \ref{lem:inv} $\s(A)=a$ is the involution, we have $r\s(A) = 0$, which implies $\s(A) = 0$, a contradiction.
\end{proof}

Since the cyclic group $\Z_n$ has exactly one involution when $n$ is even, Theorem~\ref{thm:r-n-even} implies the following result.

\begin{cor}
\label{obs:zn-even}
Let $G$ be an $r$-regular graph on $n$ vertices such that both $n$ and $r$ are even. Then $G$ is not
 $\mathbb{Z}_n$-distance antimagic.
\end{cor}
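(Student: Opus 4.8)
The plan is to deduce this corollary as a direct specialization of Theorem~\ref{thm:r-n-even}. The theorem asserts that an $r$-regular graph on an even number $n$ of vertices cannot be $A$-distance antimagic for any Abelian group $A$ of order $n$ with exactly one involution, unless $r$ is odd. So the only work is to verify that the cyclic group $\Z_n$, with $n$ even, satisfies the hypothesis ``$A$ has exactly one involution,'' and then to observe that $r$ even is precisely the excluded case.

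First I would recall that in $\Z_n$ an element $x$ is an involution exactly when $2x \equiv 0 \pmod{n}$ with $x \not\equiv 0$. When $n$ is even, $x = n/2$ is the unique such element, since $2x \equiv 0 \pmod n$ forces $x \in \{0, n/2\}$. Hence $\Z_n$ has exactly one involution whenever $n$ is even, which is the required property. (This is exactly the observation already stated in the sentence preceding the corollary in the excerpt.)

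Having established that $\Z_n$ qualifies as a group with a unique involution, I would simply apply Theorem~\ref{thm:r-n-even} with $A = \Z_n$: it tells us that $G$ cannot be $\Z_n$-distance antimagic unless $r$ is odd. Since we are given that $r$ is even, the exception does not apply, and we conclude that $G$ is not $\Z_n$-distance antimagic, as desired.

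There is essentially no obstacle here, since the corollary is an immediate consequence of the theorem once the involution count for $\Z_n$ is settled. The only point requiring any care is the elementary group-theoretic fact about the unique involution in $\Z_n$ for even $n$, but this is routine and already noted in the text. The proof is therefore a one-line invocation of Theorem~\ref{thm:r-n-even} preceded by this observation.
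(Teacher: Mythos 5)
Your proposal is correct and matches the paper's own argument exactly: the paper derives this corollary by noting that $\Z_n$ has exactly one involution (namely $n/2$) when $n$ is even and then invoking Theorem~\ref{thm:r-n-even}. Nothing further is needed.
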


The fundamental theorem of finite Abelian groups \cite{Scott} asserts that any finite Abelian group $A$ can be decomposed into the direct product of cyclic subgroups of prime power orders, and this decomposition is unique up to ordering of the cyclic subgroups. It can be verified that, if in this decomposition there are exactly $t$ cyclic subgroups with order a power of $2$, then $A$ has exactly $2^t-1$ involutions. In particular, if $|A| = n \equiv 2 \pmod 4$, then $A$ has exactly one involution. Thus, by Theorem ~\ref{thm:r-n-even}, we obtain:

\begin{cor}
\label{obs:n2mod4}
Let $G$ be an $r$-regular graph on $n$ vertices such that $n \equiv 2 \pmod 4$ and $r$ is even. There does not exists an Abelian group $A$ of order $n$ such that $G$ is $A$-distance antimagic.
\end{cor}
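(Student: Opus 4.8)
The plan is to reduce this statement to Theorem~\ref{thm:r-n-even} by establishing that \emph{every} Abelian group $A$ of order $n$ has exactly one involution when $n \equiv 2 \pmod 4$. Once this structural fact is in hand, the corollary is immediate: since $n$ is even and $r$ is even, Theorem~\ref{thm:r-n-even} tells us that $G$ cannot be $A$-distance antimagic for such an $A$; and as $A$ ranges over all Abelian groups of order $n$, we conclude that no Abelian group of order $n$ witnesses $A$-distance antimagicness of $G$.

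To count involutions, first I would write $n = 2m$ with $m$ odd, which is exactly the content of $n \equiv 2 \pmod 4$; thus the highest power of $2$ dividing $n$ is $2^1$. Applying the fundamental theorem of finite Abelian groups, I decompose $A$ as a direct product of cyclic groups of prime-power order. Because the full $2$-part of $|A|$ equals $2$, exactly one of these cyclic factors has order a power of $2$, and that factor is forced to be $\Z_2$; no factor of order $4, 8, \dots$ can occur since $4 \nmid n$.

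Next I would invoke the fact, recalled in the excerpt, that an Abelian group whose decomposition contains exactly $t$ cyclic factors of $2$-power order has exactly $2^t - 1$ involutions. (If one prefers to see this directly: the involutions of $A$ together with the identity form the subgroup of elements of order dividing $2$, which is elementary Abelian of the form $(\Z_2)^t$, of size $2^t$, so there are $2^t - 1$ non-identity elements of order $2$.) With $t = 1$ we obtain exactly $2^1 - 1 = 1$ involution in $A$, a conclusion independent of which group of order $n$ we chose.

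There is essentially no serious obstacle here: the argument is a direct application of Theorem~\ref{thm:r-n-even} once the involution count is settled, and that count is a routine consequence of the fundamental theorem. The only point requiring a modicum of care is the universal quantifier over $A$; one must verify the involution count for every Abelian group of order $n$, not merely for $\Z_n$, and the computation above does precisely that, since it depends only on the $2$-part of $n$.
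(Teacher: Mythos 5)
Your argument is correct and follows the paper's own route exactly: the paper likewise derives the corollary by noting, via the fundamental theorem of finite Abelian groups, that when $n \equiv 2 \pmod 4$ the decomposition of any Abelian group of order $n$ contains exactly one cyclic factor of $2$-power order (hence exactly $2^1-1=1$ involution), and then applying Theorem~\ref{thm:r-n-even}. Your added justification of the $2^t-1$ involution count is a harmless elaboration of a step the paper leaves as ``can be verified.''
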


The \emph{exponent} \cite{Scott} of a finite Abelian group $A$, denoted $\exp(A)$, is the least positive integer $m$ such that $m x = 0$ for every $x \in A$. In particular, $\exp(\Z_{n_1} \times \cdots \times \Z_{n_k})$ is equal to the least common multiple of $n_1, \ldots, n_k$.

Given an Abelian group $A$ and a subset $S \subseteq A \setminus \{0\}$ such that $-S = S$, where $-S = \{-s: s \in S\}$, the \emph{Cayley graph} $\Cay(A, S)$ of $A$ with respect to the {\em connection set} $S$ is defined to have vertex set $A$ such that $x, y \in A$ are adjacent if and only if $x - y \in S$. It is evident that $\Cay(A, S)$ is an $|S|$-regular graph.

\begin{thm}\label{obs:cay}
Let $A$ be a finite Abelian group of order $n = |A|$, and $G = \Cay(A, S)$ a Cayley graph on $A$ of degree $r = |S|$.
\begin{itemize}
\item[\rm (a)] If $n$ and $r$ are coprime, then $G$ is $A$-distance antimagic, and any automorphism of $A$ is an $A$-distance antimagic labelling of $G$.
\item[\rm (b)] If $\exp(A)$ is a divisor of $r$, then $G$ is $A$-distance magic, and any automorphism $f$ of $A$ is an $A$-distance magic labelling of $G$ with magic constant $\sum_{s \in S} f(s)$.
\end{itemize}
\end{thm}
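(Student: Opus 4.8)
The plan is to exploit the fact that a group automorphism interacts linearly with Cayley adjacency, which collapses the weight function into an affine map and reduces both parts to a statement about multiplication by $r$ on $A$. Since $S = -S$, the neighbourhood of a vertex $x$ in $G = \Cay(A, S)$ is $N(x) = \{x + s : s \in S\}$. Let $f$ be any automorphism of $A$; being a bijection it is a legitimate labelling, and being a homomorphism it satisfies $f(x + s) = f(x) + f(s)$. Hence
$$w_f(x) = \sum_{s \in S} f(x + s) = \sum_{s \in S} \big(f(x) + f(s)\big) = r\, f(x) + c, \qquad c := \sum_{s \in S} f(s),$$
where $c$ does not depend on $x$. (Taking $f$ to be the identity automorphism already shows such labellings exist.) Everything now hinges on the endomorphism $\mu_r : a \mapsto r a$ of $A$.

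For part (b), if $\exp(A)$ divides $r$ then $r a = 0$ for every $a \in A$ by the definition of the exponent, so $\mu_r$ is the zero map and $w_f(x) = c = \sum_{s \in S} f(s)$ for all $x$. Thus every vertex carries the same weight $c$, which is exactly the claimed magic constant, and $G$ is $A$-distance magic.

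For part (a), I would show that $\gcd(n, r) = 1$ forces $\mu_r$ to be a bijection. Indeed, if $r a = 0$ then the order of $a$ divides $r$; since it also divides $n = |A|$ by Lagrange's theorem, it divides $\gcd(n, r) = 1$, whence $a = 0$. So $\mu_r$ has trivial kernel and, $A$ being finite, is a bijection. Consequently $x \mapsto w_f(x) = \mu_r(f(x)) + c$ is a composition of the bijections $f$, $\mu_r$ and translation by $c$, hence itself a bijection of $A$; in particular the weights are pairwise distinct and $G$ is $A$-distance antimagic.

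There is no serious obstacle: the crux is simply the observation that an automorphism turns the weight into an affine function of the label, after which both claims are immediate. The only points needing a little care are the injectivity of $\mu_r$ in part (a), where one combines ``the order of $a$ divides $r$'' with Lagrange's theorem to invoke the coprimality hypothesis, and the recognition in part (b) that $\exp(A) \mid r$ is precisely the condition annihilating the $r\, f(x)$ term uniformly in $x$.
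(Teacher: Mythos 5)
Your proposal is correct and follows essentially the same route as the paper: both compute $w_f(x) = r f(x) + \sum_{s\in S} f(s)$ using the homomorphism property, then in (a) deduce injectivity from the fact that the order of a nonzero element would have to divide both $r$ and $n$, and in (b) kill the $r f(x)$ term via $\exp(A)\mid r$. Your packaging of the key step as "multiplication by $r$ is a bijection with trivial kernel" is only a cosmetic rephrasing of the paper's contradiction argument.
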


\begin{proof}
Let $f$ be an automorphism of $A$. Since $A$ is Abelian and the neighbourhood in $G$ of each $x \in A$ is $\{x+s: s \in S\}$, the weight of $x \in A$ under $f$ is given by
$$
w(x) = \sum_{s \in S} f(x+s) = \sum_{s \in S} (f(x) + f(s)) = r f(x) + \sum_{s \in S} f(s).
$$

(a) Suppose $n$ and $r$ are coprime. If $w(x) = w(y)$ for distinct $x, y \in A$, then $r f(x) = r f(y)$ and hence $r (f(x) - f(y)) = 0$. Since $f(x) \ne f(y)$, this implies that the order $o(f(x) - f(y))$ of $f(x) - f(y)$ in $A$, which is greater than 1 as $f(x) - f(y) \ne 0$, is a divisor of $r$.
 On the other hand, $o(f(x) - f(y))$ is a divisor of $n$. Thus $o(f(x) - f(y))$ is a common divisor of $n$ and $r$ that is greater than 1, contradicting our assumption $\gcd(n, r) = 1$. Therefore, $w(x) \ne w(y)$ for distinct $x, y \in A$ and so $f$ is an $A$-distance antimagic labelling of $G$.

(b) Suppose $\exp(A)$ divides $r$. Then for distinct $x, y \in A$, $o(f(x) - f(y))$ is a divisor of $r$ and as such $r (f(x) - f(y)) = 0$. Thus the computation above yields $w(x) = w(y)$. Therefore, $f$ is an $A$-distance magic labelling of $G$ with magic constant $\sum_{s \in S} f(s)$.
\end{proof}

In part (b) of Theorem \ref{obs:cay}, the magic constant $\sum_{s \in S} f(s)$ relies on the automorphism $f$ of $A$. This shows that a graph may have several magic labellings with respect to the same group but with distinct magic constants.
Since $\exp(\Z_{p}^d) = p$, Theorem \ref{obs:cay} implies the following corollary.

\begin{cor}
\label{cor:cay}
Let $p \ge 2$ be a prime, and let $r \ge 2$ and $d \ge 1$ be integers.
\begin{itemize}
\item[\rm (a)] If $p$ is not a divisor of $r$, then any Cayley graph on $\Z_{p}^d$ with degree $r$ is $\Z_{p}^d$-distance antimagic, and any automorphism of $\Z_{p}^d$ is a $\Z_{p}^d$-distance antimagic labelling.
\item[\rm (b)] If $p$ is a divisor of $r$, then any Cayley graph $\Cay(\Z_{p}^d, S)$ on $\Z_{p}^d$ with degree $r$ is $\Z_{p}^d$-distance magic, and any automorphism $f$ of $\Z_{p}^d$ is a $\Z_{p}^d$-distance magic labelling with magic constant $\sum_{s \in S} f(s)$.
\end{itemize}
\end{cor}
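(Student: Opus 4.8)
The plan is to derive Corollary~\ref{cor:cay} as a direct specialization of Theorem~\ref{obs:cay}, using the single structural fact that $\exp(\Z_p^d) = p$. The corollary is essentially a translation of the two conditions in Theorem~\ref{obs:cay} into divisibility-by-$p$ statements, so almost all the work has already been done; what remains is to verify that the hypotheses match up correctly in each case.

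First I would handle part (a). The relevant hypothesis in Theorem~\ref{obs:cay}(a) is that $n = |A|$ and $r$ are coprime. Here $A = \Z_p^d$ has order $n = p^d$, whose only prime divisor is $p$. So $\gcd(p^d, r) = 1$ precisely when $p \nmid r$. Thus the assumption ``$p$ is not a divisor of $r$'' is equivalent to $\gcd(n, r) = 1$, and applying Theorem~\ref{obs:cay}(a) immediately gives that every Cayley graph on $\Z_p^d$ of degree $r$ is $\Z_p^d$-distance antimagic, with any automorphism serving as a labelling. This is the entirety of the argument for (a).

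Next I would handle part (b). The hypothesis in Theorem~\ref{obs:cay}(b) is that $\exp(A) \mid r$. Since $\exp(\Z_p^d) = p$, this condition reads simply as $p \mid r$, which is exactly the stated assumption ``$p$ is a divisor of $r$''. Applying Theorem~\ref{obs:cay}(b) then yields that $\Cay(\Z_p^d, S)$ is $\Z_p^d$-distance magic and that any automorphism $f$ is a magic labelling with magic constant $\sum_{s \in S} f(s)$, matching the claim verbatim. The one fact I am relying on and would state (or cite) at the outset is the evaluation $\exp(\Z_p^d) = p$, which follows because every nonzero element of $\Z_p^d$ has order $p$, so $p$ is the least positive integer annihilating the whole group.

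There is no real obstacle here: the only thing to be careful about is making the equivalences in the two cases genuinely iff-statements rather than one-directional implications, and confirming that the conditions $p \nmid r$ and $p \mid r$ in (a) and (b) are mutually exclusive and exhaustive, so that the two parts together cover every possible degree $r$. Since $p$ is prime, exactly one of $p \mid r$ and $p \nmid r$ holds, so the dichotomy is clean and the corollary follows with essentially no computation beyond invoking $\exp(\Z_p^d) = p$ and Theorem~\ref{obs:cay}.
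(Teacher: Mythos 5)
Your proposal is correct and matches the paper's own derivation: the paper likewise obtains this corollary directly from Theorem~\ref{obs:cay} via the single observation that $\exp(\Z_p^d)=p$ (together with the fact that $p$ is the only prime dividing $|\Z_p^d|=p^d$, so coprimality of $n$ and $r$ reduces to $p\nmid r$). Nothing is missing.
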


In particular, for the elementary Abelian 2-groups $\Z_{2}^d$, Corollary \ref{cor:cay} yields the following result.

\begin{cor}
\label{cor:cay1}
Let $d \ge 2$ be an integer.
\begin{itemize}
\item[\rm (a)] Any Cayley graph on $\Z_{2}^d$ with an odd degree is $\Z_{2}^d$-distance antimagic, with any automorphism of $\Z_{2}^d$ as a $\Z_{2}^d$-distance antimagic labelling.
\item[\rm (b)] Any Cayley graph $\Cay(\Z_{2}^d, S)$ on $\Z_{2}^d$ with an even degree $|S|$ is $\Z_{2}^d$-distance magic, with any automorphism $f$ of $\Z_{2}^d$ as a $\Z_{2}^d$-distance magic labelling with magic constant $\sum_{s \in S} f(s)$.
\end{itemize}
\end{cor}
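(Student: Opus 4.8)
The plan is to obtain Corollary \ref{cor:cay1} as the special case $p = 2$ of Corollary \ref{cor:cay}, which has already been established. The only substantive observation is that, for the prime $p = 2$, the divisibility hypotheses appearing in Corollary \ref{cor:cay} collapse into parity conditions on the degree: the statement ``$p$ is not a divisor of $r$'' is, for $p = 2$, literally the statement ``$r$ is odd'', and ``$p$ is a divisor of $r$'' is, for $p = 2$, the statement ``$r$ is even''. So the proof amounts to checking that these translations are correct and then citing the corresponding parts of Corollary \ref{cor:cay}.

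First I would treat part (a). Suppose $G$ is a Cayley graph on $\Z_{2}^{d}$ of odd degree $r$. Then $2 \nmid r$, so the hypothesis of Corollary \ref{cor:cay}(a) (with $p = 2$) is satisfied. That corollary then asserts directly that $G$ is $\Z_{2}^{d}$-distance antimagic and that any automorphism of $\Z_{2}^{d}$ serves as a $\Z_{2}^{d}$-distance antimagic labelling, which is exactly the claim.

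For part (b) I would argue symmetrically. If $G = \Cay(\Z_{2}^{d}, S)$ has even degree $|S|$, then $2 \mid |S|$, so the hypothesis of Corollary \ref{cor:cay}(b) (again with $p = 2$) holds. Applying that part yields that $G$ is $\Z_{2}^{d}$-distance magic, that any automorphism $f$ of $\Z_{2}^{d}$ is a $\Z_{2}^{d}$-distance magic labelling, and that the associated magic constant is $\sum_{s \in S} f(s)$. This is precisely the assertion of part (b).

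There is essentially no obstacle to overcome here: the result is a direct specialization, and the only point requiring (trivial) verification is the equivalence between divisibility by the prime $2$ and the parity of the degree. One might add a remark that the restriction $d \ge 2$ in the statement simply ensures that $\Z_{2}^{d}$ admits connection sets of both odd and even size large enough to produce nontrivial graphs, but it plays no role in the logical deduction, which goes through verbatim from Corollary \ref{cor:cay}.
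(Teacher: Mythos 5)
Your proposal is correct and matches the paper exactly: the paper derives Corollary \ref{cor:cay1} as the special case $p=2$ of Corollary \ref{cor:cay}, with the divisibility conditions translating into parity of the degree just as you describe. No issues.
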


Since the \emph{hypercube} $Q_d$ is the Cayley graph $\Cay(\Z_2^d, S)$ with $S = \{(1, 0, \ldots, 0), \ldots, (0, 0, \ldots, 1)\}$, Corollary \ref{cor:cay1} implies:

\begin{cor}
\label{thm:mag-cube-Z_2^d}
Let $d \ge 2$ be an integer.
\begin{itemize}
\item[\rm (a)] If $d$ is odd, then $Q_d$ is $\Z_{2}^d$-distance antimagic.
\item[\rm (b)] If $d$ is even, then $Q_d$ is $\Z_{2}^d$-distance magic with magic constant $(1, \ldots, 1)$ (\cite{CiFrGrKo}).  
\end{itemize}
\end{cor}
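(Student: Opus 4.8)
The plan is to read off from the given description that $Q_d$ is literally the Cayley graph $\Cay(\Z_2^d, S)$ whose connection set $S$ consists of the $d$ standard basis vectors, so that $Q_d$ is a Cayley graph on $\Z_2^d$ of degree $|S| = d$. Since the parity of the degree coincides with the parity of $d$, both parts then fall out of Corollary \ref{cor:cay1} applied according to whether $d$ is odd or even, with essentially no further work beyond identifying the magic constant in part (b).

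For part (a), when $d$ is odd the degree $|S| = d$ is odd, so I would invoke Corollary \ref{cor:cay1}(a) directly to conclude that $Q_d$ is $\Z_2^d$-distance antimagic, with any automorphism of $\Z_2^d$ serving as a labelling.

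For part (b), when $d$ is even the degree $|S| = d$ is even, so Corollary \ref{cor:cay1}(b) gives that $Q_d$ is $\Z_2^d$-distance magic with magic constant $\sum_{s \in S} f(s)$ for any automorphism $f$. To obtain the specific value $(1, \ldots, 1)$ claimed, I would take $f$ to be the identity automorphism, reducing the magic constant to $\sum_{s \in S} s$. Summing the $d$ standard basis vectors in $\Z_2^d$, the $i$th coordinate receives a single $1$, namely from the basis vector $e_i$, and zeros from all the others, so the total is $(1, \ldots, 1)$.

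The only content beyond citing the previous corollary is this coordinate-wise evaluation of $\sum_{s \in S} s$ in $\Z_2^d$, which is a one-line computation; there is no substantial obstacle, the statement being an immediate specialisation of Corollary \ref{cor:cay1} to the hypercube.
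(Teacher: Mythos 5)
Your proposal is correct and follows exactly the paper's own route: the paper derives this corollary by noting that $Q_d = \Cay(\Z_2^d, S)$ is a Cayley graph of degree $d$ and applying Corollary \ref{cor:cay1}, with the magic constant $(1,\ldots,1)$ obtained from the trivial automorphism, just as you do. No gaps.
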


As we will see in the next section, both parts of this corollary are special cases of some more general results, namely Corollary \ref{cor:cart-prod-1} and Theorem \ref{thm:mag-ham}, respectively. In Theorem \ref{thm:hypercube} and Remark \ref{rem:Q6} we will give more sufficient conditions for $Q_d$ to be $\Z_{2}^d$-distance antimagic. The magic constant $(1, \ldots, 1)$ in (b) above is obtained from the trivial automorphism of $\Z_{2}^d$. Different choices of the automorphism $f$ in (b) of Corollary \ref{cor:cay1} may result in different magic constants for $Q_d$.

\section{Labelling Cartesian products}
\label{sec:cart}

The {\em Cartesian product} $G_1 \Box \cdots \Box G_d$ of $d$ graphs $G_1, \ldots, G_d$ is defined \cite{IK} to be the graph with vertex set $V(G_1) \times \cdots \times V(G_d)$ such that $(x_1, \ldots, x_d)$ and $(y_1, \ldots, y_d)$ are adjacent if and only if $x_{i} \neq y_{i}$ for exactly one $i$, and for this $i$, $x_{i}$ and $y_{i}$ are adjacent in $G_i$. The Cartesian product $H_{q_1, \ldots, q_d} = K_{q_1} \Box \cdots \Box K_{q_d}$
of complete graphs is called a {\em Hamming graph}, where $q_1, \ldots, q_d \ge 2$ are integers. Equivalently, $H_{q_1, \ldots, q_d}$ is the Cayley graph on $\Z_{q_1} \times \cdots \times \Z_{q_d}$ with respect to the connection set $S = \{(s_1, \ldots, s_d): s_i \in \Z_{q_i}, 1 \le i \le d,\;\mbox{there exists exactly one $i$ such that $s_i \neq 0$}\}$. If $q_1 = \cdots = q_d = q$, we write $H(d, q)$ in place of $H_{q, \ldots, q}$. Thus $H(d, q)$ is the Cayley graph on $\Z_{q}^d$ in which $(x_1, \ldots, x_d)$ and $(y_1, \ldots, y_d)$ are adjacent if and only if they differ at exactly one coordinate.  

\begin{thm}
\label{ham}
Let $d \ge 1$ and $q \ge 2$ be integers.
\begin{itemize}
\item[\rm (a)] If $d$ and $q$ are coprime, then $H(d,q)$ is $\Z_{q^d}$-distance antimagic. In particular, $K_q$ is $\Z_{q}$-distance antimagic for any $q \ge 2$.
\item[\rm (b)] If both $d$ and $q$ are even, then $H(d,q)$ is not $\Z_{q^d}$-distance antimagic.
\end{itemize}
\end{thm}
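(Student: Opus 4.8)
The plan is to treat the two parts separately, with part (b) following immediately from the general obstruction already established and part (a) requiring an explicit labelling.

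For part (b), I would simply check the parity hypotheses. Since $d$ is even and $q$ is even, $q-1$ is odd, so the degree $r = d(q-1)$ is even; also $n = q^d$ is even. Thus $H(d,q)$ is an $r$-regular graph on $n$ vertices with both $n$ and $r$ even, and Corollary~\ref{obs:zn-even} immediately gives that $H(d,q)$ is not $\Z_{q^d}$-distance antimagic. No further work is needed.

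For part (a), the subtlety is that although $H(d,q)$ is a Cayley graph on $\Z_q^d$ (so Theorem~\ref{obs:cay} applies for that group), here we want antimagicness with respect to the \emph{cyclic} group $\Z_{q^d}$, and $\Z_q^d \not\cong \Z_{q^d}$ when $d \ge 2$. So I would construct an explicit labelling. Identify $V(H(d,q))$ with $\{0,1,\dots,q-1\}^d$ and use the base-$q$ (radix) bijection
\[
f(x_1,\dots,x_d) = \sum_{i=1}^d x_i\, q^{\,i-1} \in \Z_{q^d}.
\]
The neighbours of $v=(x_1,\dots,x_d)$ are exactly the vertices obtained by replacing a single coordinate $x_i$ by some $c \ne x_i$, which changes the label by $(c - x_i)\,q^{i-1}$. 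I would then sum over all $d(q-1)$ neighbours, splitting the result into a term $d(q-1)f(v)$ plus a correction $\sum_{i} q^{i-1}\sum_{c\ne x_i}(c-x_i)$. Using $\sum_{c=0}^{q-1}(c-x_i)=\tfrac{q(q-1)}{2}-q x_i$ and $\sum_i q^i x_i = q f(v)$, the weight collapses to the affine form
\[
w(v) = \bigl((d-1)(q-1)-1\bigr)\,f(v) + \frac{q(q^d-1)}{2} \pmod{q^d},
\]
with multiplier $m = (d-1)(q-1)-1$ and a fixed additive constant.

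Since the constant is the same for every vertex, the weights are pairwise distinct precisely when multiplication by $m$ is a bijection of $\Z_{q^d}$, that is, when $\gcd(m,q^d)=1$, equivalently $\gcd(m,q)=1$ (as $q^d$ and $q$ share the same prime divisors). The decisive observation is that $m \equiv (d-1)(-1)-1 \equiv -d \pmod q$, whence $\gcd(m,q)=\gcd(d,q)=1$ by hypothesis; thus $f$ is a $\Z_{q^d}$-distance antimagic labelling. The case $d=1$ gives $m=-1$ and recovers the claim that $K_q$ is $\Z_q$-distance antimagic. The main obstacle is the weight computation: the crux is recognizing that it simplifies to an affine function of the label and pinning down $m$ modulo $q$. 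Once $w(v)=m\,f(v)+\text{const}$ is established, the conclusion reduces to the elementary fact that multiplication by $m$ permutes $\Z_{q^d}$ iff $\gcd(m,q)=1$.
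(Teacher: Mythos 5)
Your proposal is correct and follows essentially the same route as the paper: the same base-$q$ bijection $f(x)=\sum_i x_i q^{i-1}$, the same reduction of the weight to an affine function $w(v)=m f(v)+\tfrac{q(q^d-1)}{2}$ with $m=(d-1)(q-1)-1=d(q-1)-q\equiv -d\pmod q$, the same coprimality argument, and the same appeal to Corollary~\ref{obs:zn-even} for part (b). No substantive differences.
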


\begin{proof}
(a) Define
$$
f(x) = \sum_{i=1}^d x_i q^{i-1},\; x = (x_1, \ldots, x_d),\; x_i \in \{0, 1, \ldots, q-1\}.
$$
Then $0 \le f(x) \le q^d - 1$ and $f$ is a bijection from $\Z_{q}^d$ to $\Z_{q^d}$.
The $d(q-1)$ neighbours of $x$ are $y_{j, t} = (x_1, \ldots, x_{j-1}, t, x_{j+1}, \ldots, x_d)$, $1 \le j \le d$, $t \in \Z_{q} \setminus \{x_j\}$. Note that the label of $y_{j, t}$ is $f(y_{j, t}) = f(x) - x_j q^{j-1} + t q^{j-1}$. Thus the weight of $x$ under $f$ is given by
\bean
w(x) & = & d(q-1)f(x) + \sum_{j=1}^d \left(\sum_{t \ne x_j} (t - x_j) q^{j-1}\right)\\
        & = & d(q-1)f(x) + \sum_{j=1}^d \left(\sum_{t=0}^{q-1} (t - x_j) q^{j-1}\right)\\
        & = & d(q-1)f(x) + \sum_{j=1}^d \left(\frac{q(q-1)}{2} \cdot q^{j-1} - x_j q^{j}\right)\\
        & = & d(q-1)f(x) + \frac{q(q-1)}{2} \cdot \frac{q^{d}-1}{q-1} - qf(x)\\
        & = & \left(d(q-1)-q\right)f(x) + \frac{q(q^d-1)}{2}.\\
\eean
Since $\gcd(d, q) = 1$ by our assumption, we have $\gcd(d(q-1)-q, q^d) = 1$. Since $f$ is a bijection from $\Z_{q}^d$ to $\Z_{q^d}$, it follows that for different $x, x' \in \Z_{q}^d$ we have $w(x) \ne w(x')$. In other words, $f$ is a $\Z_{q^d}$-distance antimagic labelling of $H(d, q)$.

In particular, since $K_q \cong H(1,d)$ and $\gcd(1, q) = 1$, it follows from what we proved above that $K_q$ is $\Z_{q}$-distance antimagic for any $q \ge 2$.

(b) Since both $d$ and $q$ are even, the degree $d(q-1)$ and the order $q^d$ of $H(d, q)$ are even. Thus, by Corollary \ref{obs:zn-even}, $H(d,q)$ is not $\Z_{q^d}$-distance antimagic.
\end{proof}

Since $Q_d = H(d, 2)$, Theorem \ref{ham} implies the following result.

\begin{cor}
\label{thm:cube-Zn}
The $d$-dimensional hypercube $Q_d$ is $\Z_{2^d}$-distance antimagic if and only if $d$ is odd.
\end{cor}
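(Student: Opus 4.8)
The plan is to derive this directly from Theorem~\ref{ham} by specializing to $q = 2$, using the identification $Q_d = H(d, 2)$ that is noted in the paragraph preceding that theorem. The essential observation is that, once $q$ is fixed to be $2$, the parity of $d$ cleanly separates the two cases covered by the two parts of Theorem~\ref{ham}: the coprimality hypothesis of part (a) and the ``both even'' hypothesis of part (b) become exactly the conditions ``$d$ odd'' and ``$d$ even''. So the whole corollary reduces to reading off the two parts of the earlier theorem.

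First I would establish the ``if'' direction. Suppose $d$ is odd. Since $q = 2$ is even, we have $\gcd(d, 2) = 1$, so $d$ and $q$ are coprime. Theorem~\ref{ham}(a) then applies with $q = 2$ and yields that $H(d, 2) = Q_d$ is $\Z_{2^d}$-distance antimagic. (One may note in passing that the explicit labelling $f(x) = \sum_{i=1}^d x_i 2^{i-1}$ from the proof of Theorem~\ref{ham}(a) is precisely the map sending a binary string to the integer it encodes, which is a natural bijection $\Z_2^d \to \Z_{2^d}$.)

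Next I would handle the ``only if'' direction by contraposition. Suppose $d$ is even. Then both $d$ and $q = 2$ are even, so the hypothesis of Theorem~\ref{ham}(b) is met, and that part shows directly that $H(d, 2) = Q_d$ is not $\Z_{2^d}$-distance antimagic. Since every integer $d \ge 1$ is either odd or even, these two cases are mutually exclusive and exhaustive, so together they establish the claimed equivalence: $Q_d$ is $\Z_{2^d}$-distance antimagic if and only if $d$ is odd.

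The hard part, such as it is, has already been done inside Theorem~\ref{ham}: part (a) rests on the weight computation together with the arithmetic fact that $\gcd(d(q-1) - q, q^d) = 1$ whenever $\gcd(d, q) = 1$, and part (b) invokes the regularity obstruction of Corollary~\ref{obs:zn-even}. For the corollary itself there is essentially no obstacle; the only thing to verify is that the two hypotheses of Theorem~\ref{ham} specialize, at $q = 2$, to complementary parity conditions on $d$, which they plainly do.
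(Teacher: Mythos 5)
Your proposal is correct and matches the paper exactly: the paper derives this corollary by the same specialization of Theorem~\ref{ham} to $q=2$, noting that $Q_d = H(d,2)$ so that part (a) covers odd $d$ and part (b) covers even $d$. Nothing further is needed.
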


\begin{thm}
\label{thm:mag-ham}
Suppose $d, q \ge 2$ are integers such that $q$ is a divisor of $d$. Then $H(d, q)$ is $\Z_{q}^{d}$-distance magic, with magic constant $\left(\frac{q}{2}, \ldots, \frac{q}{2}\right)$ when $q$ is even and $(0, \ldots, 0)$ when $q$ is odd.
\end{thm}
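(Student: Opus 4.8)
The plan is to produce an explicit magic labelling and to observe that one is already handed to us by Theorem~\ref{obs:cay}. Recall that $H(d,q)$ is precisely the Cayley graph $\Cay(\Z_{q}^{d}, S)$ with
$$
S = \{(s_1, \ldots, s_d) : s_i \in \Z_q,\ \text{exactly one } s_i \neq 0\},
$$
a regular graph of degree $r = |S| = d(q-1)$. The key observation is that the divisibility hypothesis of Theorem~\ref{obs:cay}(b) is satisfied: since $\exp(\Z_{q}^{d}) = q$ and $\gcd(q, q-1) = 1$, the assumption $q \mid d$ is equivalent to $q \mid d(q-1) = r$, i.e.\ to $\exp(\Z_q^d) \mid r$. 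Hence Theorem~\ref{obs:cay}(b) applies verbatim and tells us that $H(d,q)$ is $\Z_{q}^{d}$-distance magic, with any automorphism $f$ of $\Z_q^d$ serving as a magic labelling of magic constant $\sum_{s \in S} f(s)$.

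It then remains only to evaluate the magic constant for a convenient choice of $f$. Taking $f$ to be the identity automorphism, I would compute $c = \sum_{s \in S} s$ coordinatewise. Fixing a coordinate $k$, the only $s \in S$ with nonzero $k$-th entry are the vectors carrying some $t \in \Z_q \setminus \{0\}$ in position $k$ and zeros elsewhere; each such vector contributes $t$ to the $k$-th coordinate of $c$ and nothing to the others. Therefore every coordinate of $c$ equals
$$
\sum_{t=1}^{q-1} t = \frac{q(q-1)}{2} \pmod q,
$$
so that $c = \bigl(\tfrac{q(q-1)}{2}, \ldots, \tfrac{q(q-1)}{2}\bigr)$ in $\Z_q^d$.

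The final step is the reduction of $\tfrac{q(q-1)}{2}$ modulo $q$, which is the only place I expect genuine care to be needed. When $q$ is odd, $\tfrac{q-1}{2}$ is an integer, so $\tfrac{q(q-1)}{2} = q \cdot \tfrac{q-1}{2} \equiv 0 \pmod q$ and the constant is $(0, \ldots, 0)$. When $q$ is even, $\tfrac{q}{2}$ is an integer and $\tfrac{q(q-1)}{2} = \tfrac{q}{2}(q-1) \equiv -\tfrac{q}{2} \equiv \tfrac{q}{2} \pmod q$, giving the constant $\bigl(\tfrac{q}{2}, \ldots, \tfrac{q}{2}\bigr)$, exactly as stated. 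One could alternatively bypass Theorem~\ref{obs:cay}(b) and verify magicness by hand: under the identity labelling the $k$-th coordinate of $w(x)$ works out to $\tfrac{q(q-1)}{2} - d\,x_k \pmod q$, and the hypothesis $q \mid d$ annihilates the $x_k$ term; this reproves the needed instance of Theorem~\ref{obs:cay}(b) from scratch and delivers the same constant.
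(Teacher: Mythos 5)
Your proposal is correct, but it reaches the conclusion by a genuinely different route from the paper. The paper proves Theorem~\ref{thm:mag-ham} by a self-contained, coordinate-wise computation of the weight under the identity labelling, arriving at $w(x) = -d(x_1,\ldots,x_d) + \frac{q(q-1)}{2}(1,\ldots,1)$ and then using $q \mid d$ to annihilate the first term in $\Z_q^d$ --- which is exactly the ``by hand'' verification you sketch in your closing sentence (your formula $w(x)_k \equiv \frac{q(q-1)}{2} - d\,x_k \pmod q$ matches theirs). Your primary argument instead recognises $H(d,q)$ as the Cayley graph $\Cay(\Z_q^d,S)$ of degree $r=d(q-1)$, notes that $\gcd(q,q-1)=1$ makes $q\mid d$ equivalent to $\exp(\Z_q^d)=q$ dividing $r$, and invokes Theorem~\ref{obs:cay}(b), so that all that remains is evaluating $\sum_{s\in S}s$; your coordinate-wise evaluation and the parity-sensitive reduction of $\frac{q(q-1)}{2}$ modulo $q$ are both correct. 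This is a real economy: the paper only records the Cayley-graph shortcut for the special case $q=p$ prime (via Corollary~\ref{cor:cay}(b)) in the remark following the theorem, whereas you observe that the general Theorem~\ref{obs:cay}(b) already covers every $q$ dividing $d$. What the paper's direct computation buys in exchange is an explicit expression for $w(x)$ valid for all $d$ and $q$, not only those with $q\mid d$; for the statement at hand, your derivation is shorter and equally complete.
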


\begin{proof}
Define
$$
f(x) = (x_1, \ldots, x_d),\; x = (x_1, \ldots, x_d) \in \Z_q^d.
$$
Then $f$ is a bijection from the vertex set $\Z_q^d$ of $H(d, q)$ to the group $\Z_q^d$. With operations modulo $q$ the weight of $x$ under $f$ is given by
\bean
w(x) & = & \sum_{i = 1}^d \sum_{t \in \Z_q \setminus \{x_i\}} (x_1, \ldots, x_{i-1}, t, x_{i+1}, \ldots, x_d) \\
& = & \sum_{i = 1}^d \left((q-1)x_1, \ldots, \frac{q(q-1)}{2} - x_i, \ldots, (q-1)x_d\right) \\
& = & \sum_{i = 1}^d \left\{(q-1)(x_1, \ldots, x_d)+\left(0, \ldots, \frac{q(q-1)}{2} - qx_i, \ldots, 0\right)\right\}\\
& = & \sum_{i = 1}^d \left\{-(x_1, \ldots, x_d)+\left(0, \ldots, \underbrace{\frac{q(q-1)}{2}}_{i-th}, \ldots, 0\right)\right\} \\
& = & -d(x_1, \ldots, x_d) + \frac{q(q-1)}{2} (1, \ldots, 1).
\eean

Since $q$ is a divisor of $d$ by our assumption, we have $d(x_1, \ldots, x_d) = (0, \ldots, 0)$ in $\Z_{q}^{d}$. Therefore, $w(x) = \frac{q(q-1)}{2} (1, \ldots, 1)$ for every vertex $x \in \Z_q^d$ of $H(d, q)$, and hence $H(d, q)$ is $\Z_{q}^{d}$-distance magic. If $q$ is even, then the magic constant is $\frac{q(q-1)}{2} (1, \ldots, 1) = \frac{q}{2}(q-1)(1, \ldots, 1) = -\frac{q}{2}(1, \ldots, 1) = \frac{q}{2}(1, \ldots, 1)$. If $q$ is odd, then the magic constant is $\frac{q-1}{2} q (1, \ldots, 1) = (0, \ldots, 0)$.
\end{proof}

Since $H(d, q)$ has degree $d(q-1)$, in the special case when $q=p$ is a prime factor of $d$, the fact that $H(d, p)$ is $\Z_{p}^d$-distance magic is also implied by part (b) of Corollary \ref{cor:cay}.

In the special case when $q=2$, Theorem \ref{thm:mag-ham} gives (b) of Corollary \ref{thm:mag-cube-Z_2^d}.

\begin{thm}
\label{thm:cart-prod-gen}
Let $G_i$ be an $r_i$-regular graph with $n_i \ge 2$ vertices, $1 \le i \le k$. Let $A_i$ be an Abelian group of order $n_i$ such that $\exp(A_i)$ is a divisor of $r - r_i$, $1 \le i \le k$, where $r = \sum_{i=1}^k r_i$.
\begin{itemize}
\item[\rm (a)] If $G_i$ is $A_i$-distance antimagic for $1 \le i \le k$, then $G_1 \Box \cdots \Box G_k$ is $A_1 \times \cdots \times A_k$-distance antimagic.
\item[\rm (b)] If $G_i$ is $A_i$-distance magic for $1 \le i \le k$, then $G_1 \Box \cdots \Box G_k$ is $A_1 \times \cdots \times A_k$-distance magic.
\end{itemize}
\end{thm}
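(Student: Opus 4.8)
The plan is to use the natural coordinatewise product labelling and reduce the weight of a product vertex to the tuple of the weights of its coordinates. For each $i$ let $f_i \colon V(G_i) \to A_i$ be a labelling witnessing the hypothesis (an $A_i$-distance antimagic labelling in part (a), an $A_i$-distance magic labelling in part (b)), and define $f \colon V(G_1 \Box \cdots \Box G_k) \to A_1 \times \cdots \times A_k$ by $f(x_1, \ldots, x_k) = (f_1(x_1), \ldots, f_k(x_k))$. Since each $f_i$ is a bijection, so is $f$, and it remains only to understand the weights $w_f$.

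The heart of the argument is computing $w_f(x)$ for $x = (x_1, \ldots, x_k)$. I would first recall that, by the definition of the Cartesian product, the neighbours of $x$ are exactly the vertices obtained by replacing one coordinate $x_i$ by a neighbour $z \in N_{G_i}(x_i)$ and leaving the remaining coordinates fixed. Summing the labels of all such neighbours and reading off the result coordinate by coordinate, the $j$-th coordinate of $w_f(x)$ splits into two contributions: the neighbours in direction $i = j$ contribute $\sum_{z \in N_{G_j}(x_j)} f_j(z) = w_{f_j}(x_j)$, the weight of $x_j$ inside $G_j$; while for each $i \ne j$ the $j$-th coordinate stays equal to $f_j(x_j)$ across all $r_i$ neighbours in direction $i$ (using that $G_i$ is $r_i$-regular), contributing $r_i f_j(x_j)$. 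Summing over all $i \ne j$ gives $(r - r_j) f_j(x_j)$, so the $j$-th coordinate of $w_f(x)$ equals $w_{f_j}(x_j) + (r - r_j) f_j(x_j)$.

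The decisive step — and the one I expect to be the crux — is the exponent hypothesis, which is precisely the device that collapses these cross terms. Since $\exp(A_j)$ divides $r - r_j$, we have $(r - r_j) g = 0$ for every $g \in A_j$, so the off-diagonal contribution $(r - r_j) f_j(x_j)$ vanishes identically. Hence $w_f(x) = (w_{f_1}(x_1), \ldots, w_{f_k}(x_k))$: the weight of a product vertex is simply the tuple of the weights of its coordinates. Getting this separation clean (correct bookkeeping of the $i = j$ versus $i \ne j$ terms, and invoking the exponent condition in the right place) is the only real content of the proof.

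With this formula in hand both parts are immediate. For (a), suppose $w_f(x) = w_f(x')$ for $x = (x_1,\ldots,x_k)$ and $x' = (x_1',\ldots,x_k')$; then $w_{f_j}(x_j) = w_{f_j}(x_j')$ for every $j$, and since each $f_j$ is $A_j$-distance antimagic this forces $x_j = x_j'$ for all $j$, whence $x = x'$. Thus $f$ is $A_1 \times \cdots \times A_k$-distance antimagic. For (b), if each $w_{f_j}$ is constant equal to the magic constant $\mu_j$ of $G_j$, then $w_f(x) = (\mu_1, \ldots, \mu_k)$ for every vertex $x$, so $f$ is $A_1 \times \cdots \times A_k$-distance magic with magic constant $(\mu_1, \ldots, \mu_k)$.
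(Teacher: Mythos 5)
Your proposal is correct and follows essentially the same route as the paper: the coordinatewise product labelling, the computation showing the $j$-th coordinate of the weight is $w_{f_j}(x_j) + (r - r_j)f_j(x_j)$, and the use of the exponent hypothesis to annihilate the cross term, after which both parts are immediate. No gaps.
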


\begin{proof}
(a) Since $G_i$ is $A_i$-distance antimagic, it admits an $A_i$-distance antimagic labelling, say, $g_i: V(G_i) \rightarrow A_i$, $1 \le i \le k$. Define  
\be
\label{eq:f}
f(x_1, \ldots, x_k) = (g_1(x_1), \ldots, g_k(x_k)),\; x_i \in V(G_i), 1 \le i \le k.
\ee
Then $f$ is a bijection from $V(G_1) \times \cdots \times V(G_k)$ to $A_1 \times \cdots \times A_k$. Denote by $w_{G_i}(x_i)$ the weight of $x_i$ under $g_i$. Then the weight of $x = (x_1, \ldots, x_k)$ under $f$ is given by
\bea
w(x) & = & \sum_{i=1}^k\; \sum_{x_i':\, x_i x_i' \in E(G_i)} f(x_1, \ldots, x_{i-1}, x'_i, x_{i+1}, \ldots, x_k)\non \\
& = & \sum_{i=1}^k\; \sum_{x_i':\, x_i x_i' \in E(G_i)} (g_1(x_1), \ldots, g_{i-1}(x_{i-1}), g_i(x'_i), g_{i+1}(x_{i+1}), \ldots, g_k(x_k))\non \\
& = & \sum_{i=1}^k (r_i g_1(x_1), \ldots, r_i g_{i-1}(x_{i-1}), w_{G_i}(x_i), r_ig_{i+1}(x_{i+1}), \ldots, r_i g_k(x_k))\non \\
& = & r(g_1(x_1), \ldots, g_k(x_k)) + \sum_{i=1}^k (0, \ldots, 0, w_{G_i}(x_i) - r_ig_{i}(x_{i}), 0, \ldots, 0)\non \\
& = & (w_{G_1}(x_1) + (r-r_1)g_{1}(x_1), \ldots, w_{G_k}(x_k) + (r-r_k)g_{k}(x_k)). \label{eq:w}
\eea
Since $\exp(A_i)$ is a divisor of $r-r_i$ by our assumption, we have $(r-r_i)g_{i}(x_i) = 0$ in $A_i$ for each $i$. Hence $w(x) = (w_{G_1}(x_1), \ldots, w_{G_k}(x_k))$. (In all computations, the operation on the $i$th coordinate is performed in $A_i$.) Since each $G_i$ is assumed to be $A_i$-distance antimagic, it follows that $f$ is an $A_1 \times \cdots \times A_k$-distance antimagic labelling of $G_1 \Box \cdots \Box G_k$.

(b) Define $f$ as in (a) with the understanding that each $g_i$ is an $A_i$-distance magic labelling of $G_i$. The computation above yields $w(x) = (w_{G_1}(x_1), \ldots, w_{G_k}(x_k))$ for every $x$.
Since $g_i$ is an $A_i$-distance magic labelling of $G_i$, say, with magic constant $\mu_i \in A_i$, we have $w_{G_i}(x_i) = \mu_i$ for all $x_i \in V(G_i)$. Therefore, $w(x) = (\mu_1, \ldots, \mu_k)$ for all $x$, and hence $G_1 \Box \cdots \Box G_k$ is $A_1 \times \cdots \times A_k$-distance magic.
\end{proof}

Since $\exp(\Z_{n}^{d}) = n$, Theorem \ref{thm:cart-prod-gen} implies:

\begin{cor}
\label{thm:cart-prod}
Let $d_i, r_i \ge 1$ and $n_i \ge 2$ be integers, $1 \le i \le k$. Let $G_i$ be an $r_i$-regular graph with $n_i^{d_i}$ vertices, $1 \le i \le k$. Suppose $n_i$ is a divisor of $r - r_i$, $1 \le i \le k$, where $r = \sum_{i=1}^k r_i$.
\begin{itemize}
\item[\rm (a)] If $G_i$ is $\Z_{n_i}^{d_i}$-distance antimagic for $1 \le i \le k$, then $G_1 \Box \cdots \Box G_k$ is $\Z_{n_1}^{d_1} \times \cdots \times \Z_{n_k}^{d_k}$-distance antimagic.
\item[\rm (b)] If $G_i$ is $\Z_{n_i}^{d_i}$-distance magic for $1 \le i \le k$, then $G_1 \Box \cdots \Box G_k$ is $\Z_{n_1}^{d_1} \times \cdots \times \Z_{n_k}^{d_k}$-distance magic.
\end{itemize}
\end{cor}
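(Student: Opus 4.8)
The plan is to derive this statement as a direct specialization of Theorem~\ref{thm:cart-prod-gen}, with no new combinatorial work required. For each $i$ with $1 \le i \le k$ I would take the Abelian group $A_i = \Z_{n_i}^{d_i}$. This choice has order $|A_i| = n_i^{d_i}$, which matches the number of vertices of $G_i$ by hypothesis, so $A_i$ is an admissible group for $G_i$ in the sense demanded by Theorem~\ref{thm:cart-prod-gen}.

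The only premise of Theorem~\ref{thm:cart-prod-gen} that still needs verification is the exponent condition $\exp(A_i) \mid r - r_i$, where $r = \sum_{i=1}^k r_i$. Here I would invoke the identity $\exp(\Z_{n_i}^{d_i}) = n_i$ recalled just before the corollary, which itself follows from the formula $\exp(\Z_{m_1} \times \cdots \times \Z_{m_k}) = \lcm(m_1, \ldots, m_k)$ applied to $d_i$ equal factors. Under this identity the requirement $\exp(A_i) \mid r - r_i$ becomes precisely $n_i \mid r - r_i$, which is exactly the divisibility hypothesis of the corollary. Thus both ingredients needed by the general theorem—the order matching and the exponent divisibility—are in place for every $i$.

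With these checks complete, parts (a) and (b) follow at once by substituting $A_i = \Z_{n_i}^{d_i}$ into Theorem~\ref{thm:cart-prod-gen}. If each $G_i$ is $\Z_{n_i}^{d_i}$-distance antimagic, then part (a) of that theorem gives that $G_1 \Box \cdots \Box G_k$ is $\Z_{n_1}^{d_1} \times \cdots \times \Z_{n_k}^{d_k}$-distance antimagic; if instead each $G_i$ is $\Z_{n_i}^{d_i}$-distance magic, part (b) yields the corresponding magic conclusion. Since the argument is purely a matter of instantiating the explicit groups and translating the exponent condition through the formula $\exp(\Z_{n_i}^{d_i}) = n_i$, I do not expect any genuine obstacle: the entire substance of the corollary resides in Theorem~\ref{thm:cart-prod-gen}, and the one nontrivial ingredient is the exponent computation for $\Z_{n_i}^{d_i}$.
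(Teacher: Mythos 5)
Your proposal is correct and matches the paper's own derivation exactly: the paper deduces the corollary from Theorem~\ref{thm:cart-prod-gen} by taking $A_i = \Z_{n_i}^{d_i}$ and noting $\exp(\Z_{n_i}^{d_i}) = n_i$, so the hypothesis $n_i \mid r - r_i$ is precisely the required exponent condition. Nothing further is needed.
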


Setting $k = 2$ and $n_1 = n_2 = 2$ in Corollary \ref{thm:cart-prod}, we obtain:

\begin{cor}
\label{cor:cart-prod-2}
Let $G, H$ be regular graphs with $2^d, 2^e$ vertices, respectively. Suppose both $G$ and $H$ have even degrees.
\begin{itemize}
\item[\rm (a)] If $G$ is $\Z_2^d$-distance antimagic and $H$ is $\Z_2^e$-distance antimagic, then $G \Box H$ is $\Z_2^{d+e}$-distance antimagic.
\item[\rm (b)] If $G$ is $\Z_2^d$-distance magic and $H$ is $\Z_2^e$-distance magic, then $G \Box H$ is $\Z_2^{d+e}$-distance magic.
\end{itemize}
\end{cor}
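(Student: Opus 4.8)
The plan is to derive the statement directly as the special case $k = 2$, $n_1 = n_2 = 2$, $d_1 = d$, $d_2 = e$ of Corollary \ref{thm:cart-prod}, so that almost all of the work has already been done; what remains is to confirm that the hypotheses of that corollary are satisfied under the present assumptions. With this choice of parameters, $G_1 = G$ has $2^d = n_1^{d_1}$ vertices and $G_2 = H$ has $2^e = n_2^{d_2}$ vertices, so the order conditions hold automatically. Moreover the group $\Z_{n_1}^{d_1} \times \Z_{n_2}^{d_2}$ becomes $\Z_2^{d} \times \Z_2^{e}$, which I would identify with $\Z_2^{d+e}$ via the obvious isomorphism obtained by concatenating the two coordinate tuples.

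The one hypothesis requiring checking is the divisibility condition $n_i \mid r - r_i$, where $r = r_1 + r_2$ and $r_i$ denotes the (common) degree of $G_i$. For $i = 1$ this reads $2 \mid r - r_1 = r_2$, and for $i = 2$ it reads $2 \mid r - r_2 = r_1$. Hence the condition of Corollary \ref{thm:cart-prod} is equivalent to requiring both $r_1$ and $r_2$ to be even, which is exactly the present assumption that $G$ and $H$ have even degrees. This elementary parity observation is the only substantive point in the argument.

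Having verified the hypotheses, I would invoke Corollary \ref{thm:cart-prod}(a) to conclude that if $G$ is $\Z_2^d$-distance antimagic and $H$ is $\Z_2^e$-distance antimagic, then $G \Box H$ is $\Z_2^{d} \times \Z_2^{e}$-distance antimagic, and therefore $\Z_2^{d+e}$-distance antimagic after the identification above; part (b) follows identically from Corollary \ref{thm:cart-prod}(b) with ``antimagic'' replaced by ``magic'' throughout. Since the result is a pure specialization, I anticipate no genuine obstacle: the only thing that could go wrong is a mismatch in the divisibility hypothesis, and the parity check disposes of precisely that concern.
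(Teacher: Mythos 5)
Your proposal is correct and is exactly the paper's route: the paper obtains this corollary by setting $k=2$ and $n_1=n_2=2$ in Corollary \ref{thm:cart-prod}, and your parity check that $n_i \mid r-r_i$ reduces to both degrees being even is precisely the only verification needed.
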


Theorem \ref{thm:cart-prod-gen} and its corollaries above enable us to construct group distance antimagic/magic graphs from known ones. As an example, from Corollaries \ref{cor:cay} and \ref{thm:cart-prod} we obtain the following result (a general result involving more than two factors can be formulated similarly).

\begin{cor}
\label{cor:two}
Let $p_i \ge 2$ be a prime and $d_i, r_i \ge 1$ be integers with $2 \le r_i \le p_i^{d_i}$ for $i = 1, 2$. Let $G_i$ be any Cayley graph on $\Z_{p_i}^{d_i}$ with degree $r_i$ for $i = 1, 2$.
\begin{itemize}
\item[\rm (a)] If $p_1$ divides $r_2$ but not $r_1$, and $p_2$ divides $r_1$ but not $r_2$, then $G_1 \Box G_2$ is $\Z_{p_1}^{d_1} \times \Z_{p_2}^{d_2}$-distance antimagic.
\item[\rm (b)] If both $p_1$ and $p_2$ divide each of $r_1$ and $r_2$, then $G_1 \Box G_2$ is $\Z_{p_1}^{d_1} \times \Z_{p_2}^{d_2}$-distance magic.
\end{itemize}
\end{cor}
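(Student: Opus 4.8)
The plan is to derive this corollary by chaining the two earlier results: Corollary~\ref{cor:cay}, which decides whether a given Cayley graph on an elementary Abelian $p$-group is distance antimagic or distance magic according to a single divisibility, and Corollary~\ref{thm:cart-prod}, which transports such a property through a Cartesian product once an accompanying divisibility condition holds. Concretely, I would apply Corollary~\ref{thm:cart-prod} with $k=2$, $n_1=p_1$ and $n_2=p_2$, so that the factor groups are $\Z_{p_1}^{d_1}$ and $\Z_{p_2}^{d_2}$, the ambient group is $\Z_{p_1}^{d_1}\times\Z_{p_2}^{d_2}$, and the total degree is $r=r_1+r_2$. With these choices the divisibility hypothesis $n_i\mid r-r_i$ of Corollary~\ref{thm:cart-prod} reads $p_i\mid r-r_i$.

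The key bookkeeping step is to translate that hypothesis into conditions on $r_1,r_2$. Since $k=2$ we have $r-r_1=r_2$ and $r-r_2=r_1$, so the product-theorem requirement $n_i\mid r-r_i$ becomes exactly the pair of cross conditions $p_1\mid r_2$ and $p_2\mid r_1$. For part~(a) I would then note that the hypotheses $p_1\nmid r_1$ and $p_2\nmid r_2$ feed Corollary~\ref{cor:cay}(a) to make each $G_i$ a $\Z_{p_i}^{d_i}$-distance antimagic graph, while the cross conditions $p_1\mid r_2$ and $p_2\mid r_1$ supply precisely the divisibility needed to invoke Corollary~\ref{thm:cart-prod}(a). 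Combining these yields that $G_1\Box G_2$ is $\Z_{p_1}^{d_1}\times\Z_{p_2}^{d_2}$-distance antimagic.

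Part~(b) follows by the same route with the antimagic inputs replaced by magic ones. Here the hypothesis that both $p_1$ and $p_2$ divide each of $r_1$ and $r_2$ does double duty: the diagonal parts $p_1\mid r_1$ and $p_2\mid r_2$ let Corollary~\ref{cor:cay}(b) make each $G_i$ a $\Z_{p_i}^{d_i}$-distance magic graph, and the cross parts $p_1\mid r_2$ and $p_2\mid r_1$ again verify the divisibility hypothesis of Corollary~\ref{thm:cart-prod}(b). Applying that corollary then gives that $G_1\Box G_2$ is $\Z_{p_1}^{d_1}\times\Z_{p_2}^{d_2}$-distance magic.

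There is no genuine analytic obstacle here; the argument is a verification that the hypotheses of the two source corollaries align. The only point requiring care is keeping the roles of the divisibilities straight---distinguishing the diagonal conditions $p_i\mid r_i$ (or their negations), which select magic versus antimagic behaviour of each factor, from the cross conditions $p_1\mid r_2,\ p_2\mid r_1$, which activate the Cartesian-product theorem. One may also remark that the bound $r_i\le p_i^{d_i}$ serves only to guarantee that a Cayley graph of degree $r_i$ on $\Z_{p_i}^{d_i}$ can exist, so that the statement is not vacuous; it plays no part in the logical deduction itself.
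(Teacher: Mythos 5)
Your proposal is correct and matches the paper's intended derivation exactly: the paper states this corollary as an immediate consequence of Corollary~\ref{cor:cay} combined with Corollary~\ref{thm:cart-prod} (with $k=2$, $n_i=p_i$), which is precisely the chaining you carry out, including the correct identification of the cross conditions $p_1\mid r_2$ and $p_2\mid r_1$ as the product-theorem hypotheses. No gaps.
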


Theorem \ref{ham} and part (a) of Theorem \ref{thm:cart-prod-gen} together imply the following result.

\begin{cor}
\label{cor:cart-prod-1}
Let $d_i \ge 1$ and $q_i \ge 2$ be integers which are coprime, $1 \le i \le k$. If $q_i^{d_i}$ is a divisor of $\sum_{j \ne i} d_j (q_j - 1)$ for $1 \le i \le k$, then $H(d_1, q_1) \Box \cdots \Box H(d_k, q_k)$ is $\Z_{q_1^{d_1}} \times \cdots \times \Z_{q_k^{d_k}}$-distance antimagic.
\end{cor}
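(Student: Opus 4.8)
The plan is to realize this corollary as a direct application of part (a) of Theorem~\ref{thm:cart-prod-gen}, feeding in the building blocks supplied by Theorem~\ref{ham}(a). So first I would set $G_i = H(d_i, q_i)$ and $A_i = \Z_{q_i^{d_i}}$ for $1 \le i \le k$, and record the relevant parameters of each factor. Recall from the discussion preceding Theorem~\ref{ham} that $H(d_i, q_i)$ has $n_i = q_i^{d_i}$ vertices and is $r_i$-regular with $r_i = d_i(q_i - 1)$. Since $A_i = \Z_{q_i^{d_i}}$ is cyclic of order $q_i^{d_i}$, its exponent is $\exp(A_i) = q_i^{d_i}$. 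Thus $|A_i| = n_i$, as required by Theorem~\ref{thm:cart-prod-gen}.

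Next I would verify that the hypotheses of Theorem~\ref{thm:cart-prod-gen}(a) are exactly the hypotheses of the corollary, suitably translated. Writing $r = \sum_{i=1}^k r_i = \sum_{i=1}^k d_i(q_i - 1)$, we have
$$
r - r_i = \sum_{j \ne i} d_j(q_j - 1),
$$
so the requirement that $\exp(A_i) = q_i^{d_i}$ be a divisor of $r - r_i$ coincides precisely with the assumption that $q_i^{d_i} \mid \sum_{j \ne i} d_j(q_j - 1)$ for each $i$. This handles the exponent condition of Theorem~\ref{thm:cart-prod-gen}. The only remaining input needed is that each factor $G_i = H(d_i, q_i)$ is itself $A_i = \Z_{q_i^{d_i}}$-distance antimagic. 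This is supplied by Theorem~\ref{ham}(a): since $d_i$ and $q_i$ are coprime by hypothesis, $H(d_i, q_i)$ is indeed $\Z_{q_i^{d_i}}$-distance antimagic.

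With all hypotheses of Theorem~\ref{thm:cart-prod-gen}(a) verified, its conclusion gives that $G_1 \Box \cdots \Box G_k = H(d_1, q_1) \Box \cdots \Box H(d_k, q_k)$ is $A_1 \times \cdots \times A_k = \Z_{q_1^{d_1}} \times \cdots \times \Z_{q_k^{d_k}}$-distance antimagic, which is the assertion. Since the result is obtained purely by assembling two previously established theorems, I do not anticipate any genuine obstacle; the only point demanding care is the bookkeeping of parameters, namely confirming that the coprimality hypothesis is what licenses Theorem~\ref{ham}(a) for each factor while the divisibility hypothesis is exactly the exponent-divisibility condition $\exp(A_i) \mid r - r_i$ needed to merge the factor labellings via Theorem~\ref{thm:cart-prod-gen}(a). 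These two hypotheses play logically distinct roles, and keeping them separate is the main thing to get right.
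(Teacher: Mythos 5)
Your proposal is correct and matches the paper's argument exactly: the paper derives this corollary by combining Theorem \ref{ham}(a) (using coprimality of $d_i$ and $q_i$ to make each $H(d_i,q_i)$ a $\Z_{q_i^{d_i}}$-distance antimagic factor) with Theorem \ref{thm:cart-prod-gen}(a) (using the divisibility hypothesis as the condition $\exp(A_i) \mid r - r_i$). Your parameter bookkeeping, including $r_i = d_i(q_i-1)$ and $r - r_i = \sum_{j \ne i} d_j(q_j-1)$, is precisely what the paper intends.
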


We will use a special case of this corollary, with all $(d_i, q_i) = (1, 2)$, in the proof of the following result.

\begin{thm}
\label{thm:hypercube}
Let $d \ge 3$ be an integer. If $d$ is odd or $d\equiv0\pmod4$, then $Q_d$ is $\Z_{2}^{d}$-distance antimagic.
\end{thm}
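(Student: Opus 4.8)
The plan is to handle the two cases separately, reducing each to ingredients already available: Corollary~\ref{cor:cart-prod-1} for odd $d$, and Theorem~\ref{thm:cart-prod-gen}(a) together with a single explicit base labelling for $d\equiv0\pmod4$.

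For odd $d$ I would write $Q_d=\underbrace{K_2\,\Box\cdots\Box\,K_2}_{d}$ and apply Corollary~\ref{cor:cart-prod-1} with $k=d$ and all $(d_i,q_i)=(1,2)$, so that each factor is $H(1,2)=K_2$ and the product group is $\underbrace{\Z_2\times\cdots\times\Z_2}_{d}=\Z_2^{d}$. The divisibility hypothesis there reads $q_i^{d_i}=2\mid\sum_{j\ne i}d_j(q_j-1)=d-1$, which holds precisely because $d$ is odd, and we conclude that $Q_d$ is $\Z_2^{d}$-distance antimagic.

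For $d\equiv0\pmod4$ I would write $d=4m$ and use $Q_a\Box Q_b=Q_{a+b}$ to express $Q_{4m}=\underbrace{Q_4\,\Box\cdots\Box\,Q_4}_{m}$. Then I apply Theorem~\ref{thm:cart-prod-gen}(a) with each $G_i=Q_4$, $A_i=\Z_2^{4}$, $r_i=4$ and $r=4m$: the exponent condition is $\exp(\Z_2^{4})=2\mid r-r_i=4m-4=4(m-1)$, which holds for every $m\ge1$, and the product group is $(\Z_2^{4})^{m}=\Z_2^{4m}$. Hence, \emph{provided} each factor $Q_4$ is $\Z_2^{4}$-distance antimagic, the theorem yields that $Q_{4m}$ is $\Z_2^{4m}$-distance antimagic, covering all $d\equiv0\pmod4$ simultaneously (with $m=1$ the bare base case). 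Note that exactly the same strategy fails for $d\equiv2\pmod4$, since every hypercube factor forced by the exponent condition is even-dimensional and the smallest candidate base, $Q_2=C_4$, is not antimagic (two antipodal vertices always share a weight); this is why those dimensions are left aside.

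Everything thus rests on the base case $d=4$, which I expect to be the only real obstacle. The automorphism labellings used above are useless here: for even $d$ any automorphism $f$ of $\Z_2^{d}$ gives, as in the proof of Theorem~\ref{obs:cay}, $w(x)=d\,f(x)+\sum_{s\in S}f(s)=\sum_{s\in S}f(s)$, a constant, hence a magic rather than antimagic labelling, so a genuinely non-linear bijection $f:\Z_2^{4}\to\Z_2^{4}$ is needed. To organise the search I would note that the weight operator $w=Tf$, where $(Tg)(x)=\sum_{i=1}^{4}g(x+e_i)$, acts on the multilinear expansion of a coordinate function by $T(x^{S})=\sum_{i\in S}x^{S\setminus\{i\}}$; in particular $T$ lowers degree by one, satisfies $T^{2}=0$, and has $\operatorname{im}T=\ker T$ of dimension $8$, so every coordinate of $w$ must lie in this $8$-dimensional space. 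It then suffices to pick the four coordinates of $f$ so that $f$ is a bijection and $w$ separates all $16$ vertices. Since three independent even linear forms such as $x_1+x_2,\,x_1+x_3,\,x_1+x_4$ already separate the eight cosets of $\langle(1,1,1,1)\rangle$, while the quadratic kernel element $x_1x_2+x_1x_3+x_2x_3$ (whose $T$-preimage $x_1x_2x_3$ is cubic) flips sign under translation by $(1,1,1,1)$ and thus separates the two points within each coset, a suitable non-linear $f$ exists. I would therefore exhibit one such labelling as an explicit table and simply verify that its sixteen weights exhaust $\Z_2^{4}$, completing the base case and hence the theorem.
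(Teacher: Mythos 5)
Your overall route is the paper's: the odd case is handled identically via Corollary~\ref{cor:cart-prod-1} with all $(d_i,q_i)=(1,2)$, and for $d=4m$ the paper likewise reduces everything to the single base case $Q_4$ and then takes Cartesian powers (it iterates the two-factor Corollary~\ref{cor:cart-prod-2} by induction rather than invoking Theorem~\ref{thm:cart-prod-gen}(a) with $m$ factors at once, but that is only a difference of packaging; your check $\exp(\Z_2^{4})=2\mid 4(m-1)$ is correct, as is your side remark on why $d\equiv2\pmod4$ is out of reach of this scheme).

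The gap is the base case $Q_4$, which is precisely where the paper does its only concrete work: it exhibits an explicit $4\times4$ table of labels on $C_4\Box C_4$ and one checks that the sixteen weights exhaust $\Z_2^{4}$. You promise such a table but do not produce one, and the existence argument you sketch does not close on its own. What you actually establish is that the weight operator $T$ satisfies $T^2=0$ with $\operatorname{im}T=\ker T$ of dimension $8$, and that four functions in $\operatorname{im}T$ --- say $x_1+x_2$, $x_1+x_3$, $x_1+x_4$ and $x_1x_2+x_1x_3+x_2x_3$ --- can be juxtaposed into a bijection of $\Z_2^{4}$. That shows a bijective \emph{candidate} for $w$ exists inside $(\operatorname{im}T)^4$; it does not show that this candidate is realised as $Tf$ for some \emph{bijective} $f$. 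The obvious preimage $(x_1x_2,\,x_1x_3,\,x_1x_4,\,x_1x_2x_3)$ is not a bijection (it sends both $0$ and $e_1$ to $0$), so one must perturb each coordinate by elements of $\ker T$ --- which leaves $w$ unchanged --- until $f$ becomes bijective, and you have not argued that such a perturbation exists. It does (the paper's table is a witness), but as written the crucial step is asserted rather than proved. Supply the explicit labelling, or complete the kernel-perturbation argument, and your proof coincides with the paper's.
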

 
\begin{proof}
Choosing all $d_i = 1$ and $q_i = 2$, we have $Q_k = H(d_1, q_1) \Box \cdots \Box H(d_k, q_k)$, and $q_i^{d_i}$ divides $\sum_{j \ne i} d_j (q_j - 1)$ if and only if $k$ is odd. Thus, by Corollary \ref{cor:cart-prod-1}, if $k$ is odd, then $Q_k$ is $\Z_{2}^{k}$-distance antimagic.

Next we prove that $Q_4$ is $\Z_{2}^{4}$-distance antimagic. In fact, $Q_4$ can be viewed as $C_4 \Box C_4$, and one can verify that it admits a $\Z_{2}^{4}$-distance antimagic labelling as given in the following table:
\begin{center}
\begin{tabular}{|c|c|c|c|}
  \hline
  % after \\: \hline or \cline{col1-col2} \cline{col3-col4} ...
  (0,0,0,0) & (0,1,0,0) & (1,1,1,1) & (1,1,0,0) \\
  (0,0,0,1) & (1,0,1,1) & (1,1,1,0) & (0,1,0,1) \\
  (0,0,1,1) & (1,0,1,0) & (1,0,0,1) & (0,1,1,0) \\
  (0,0,1,0) & (0,1,1,1) & (1,0,0,0) & (1,1,0,1) \\
  \hline
\end{tabular}
\end{center}
(In the table every entry represents a vertex label, and the vertex neighbours are the entries immediately above, below, on the left and on the right, where the top row is considered to be next to the bottom and the first column next to the last.)

Since $Q_4$ is $\Z_{2}^{4}$-distance antimagic, by part (a) of Corollary \ref{cor:cart-prod-2}, $Q_4 \Box Q_4$ is $\Z_{2}^{8}$-distance antimagic as $\Z_{2}^{4} \times \Z_{2}^{4} \cong \Z_{2}^{8}$. Similarly, by induction and part (a) of Corollary \ref{cor:cart-prod-2}, one can see that $Q_{4t} \cong Q_4 \Box \cdots \Box Q_4$ ($t$ factors) is $\Z_{2}^{4t}$-distance antimagic. In other words, if $k \ge 4$ is a multiple of $4$, then $Q_{k}$ is $\Z_{2}^{k}$-distance antimagic.  
\end{proof}
 
\begin{rem}
\label{rem:Q6}
If $Q_k$ is $\Z_{2}^{k}$-distance antimagic for some integer $k \ge 6$ with $k \equiv 2 \pmod4$, then $Q_d$ is $\Z_{2}^{d}$-distance antimagic for every integer $d \geq k$ with $d\equiv2\pmod4$. 

In particular, if we can prove that $Q_6$ is $\Z_{2}^{6}$-distance antimagic, then by Theorem \ref{thm:hypercube}, $Q_d$ is $\Z_{2}^{d}$-distance antimagic for all integers $d \ge 3$.
\end{rem}

In fact, we have $d = 4t + k$ for some $t$ and so $Q_d \cong Q_{4t} \Box Q_k$. We may assume $t > 0$. Since $Q_{4t}$ is $\Z_{2}^{4t}$-distance antimagic by Theorem \ref{thm:hypercube}, if $Q_k$ is $\Z_{2}^{k}$-distance antimagic, then by part (a) of Corollary \ref{cor:cart-prod-2}, $Q_d$ is $\Z_{2}^{d}$-distance antimagic.

Unfortunately, at the time of writing we do not know whether $Q_6$ is $\Z_{2}^{6}$-distance antimagic.  

Applying part (a) of Corollary \ref{thm:cart-prod} to cycles, we obtain the following result, where $C_n$ denotes the cycle of length $n$.

\begin{thm}
\label{cor:prod-cycle}
\begin{itemize}
\item[\rm (a)] If $n \ge 3$ is an odd integer, then $C_n$ is $\Z_n$-distance antimagic.
\item[\rm (b)] Let $n_1, \ldots, n_k \ge 3$ be odd integers, where $k \ge 2$. If each $n_i$ is a divisor of $k-1$, then $C_{n_1} \Box \cdots \Box C_{n_k}$ is $\Z_{n_1} \times \cdots \times \Z_{n_k}$-distance antimagic.
In particular, for any prime $p$ and any integer $d \ge 1$, $C_{p} \Box \cdots \Box C_{p}$ ($pd+1$ factors) is $\Z_p^{pd+1}$-distance antimagic.
\end{itemize}
\end{thm}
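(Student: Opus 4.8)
The plan is to treat each cycle as a Cayley graph and reduce both parts to results already proved in the excerpt: Theorem~\ref{obs:cay}(a) for a single cycle and part~(a) of Corollary~\ref{thm:cart-prod} for the product. For part~(a), I would write $C_n = \Cay(\Z_n, \{1, -1\})$, a $2$-regular Cayley graph on $\Z_n$, so its degree is $r = 2$. When $n$ is odd, $\gcd(n, 2) = 1$, and Theorem~\ref{obs:cay}(a) applies immediately, showing that any automorphism of $\Z_n$---in particular the identity map $f(x) = x$---is a $\Z_n$-distance antimagic labelling. (Concretely, under the identity the weight of $x$ is $w(x) = (x-1) + (x+1) = 2x$, and $w(x) = w(y)$ forces $2(x - y) = 0$, whence $x = y$ since $n$ is odd.)

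For part~(b), I would invoke Corollary~\ref{thm:cart-prod}(a) with $G_i = C_{n_i}$, $d_i = 1$, $r_i = 2$, so that $G_i$ has $n_i^{d_i} = n_i$ vertices and $\Z_{n_i}^{d_i} = \Z_{n_i}$. Here $r = \sum_{i=1}^k r_i = 2k$, so $r - r_i = 2k - 2 = 2(k-1)$. By part~(a) each $C_{n_i}$ is $\Z_{n_i}$-distance antimagic, so the only hypothesis of the corollary that needs checking is the divisibility condition $n_i \mid r - r_i$. Since each $n_i$ is odd, the stated assumption $n_i \mid (k-1)$ is equivalent to $n_i \mid 2(k-1) = r - r_i$; thus the hypothesis holds and Corollary~\ref{thm:cart-prod}(a) gives that $C_{n_1} \Box \cdots \Box C_{n_k}$ is $\Z_{n_1} \times \cdots \times \Z_{n_k}$-distance antimagic.

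Finally, for the ``in particular'' statement I would specialise to $n_1 = \cdots = n_k = p$ with $p$ an odd prime and $k = pd + 1$ factors. Then $k - 1 = pd$ is divisible by $p$, so the divisibility hypothesis of part~(b) is satisfied and the conclusion reads that $C_p \Box \cdots \Box C_p$ ($pd+1$ factors) is $\Z_p \times \cdots \times \Z_p = \Z_p^{pd+1}$-distance antimagic. I do not anticipate any real obstacle: the whole argument is a bookkeeping application of earlier results. The only genuinely substantive point is the passage between the divisibility condition stated here ($n_i \mid k-1$) and the one demanded by Corollary~\ref{thm:cart-prod} ($n_i \mid 2(k-1)$); this hinges on the oddness of the $n_i$, which is the very same property that makes each factor distance antimagic over $\Z_{n_i}$ in part~(a).
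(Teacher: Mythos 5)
Your proposal is correct and follows essentially the same route as the paper: part (a) uses the consecutive labelling $0,1,\ldots,n-1$ (your identity-automorphism argument via Theorem~\ref{obs:cay}(a) is just a formalisation of the paper's direct verification), and part (b) is exactly the paper's application of Corollary~\ref{thm:cart-prod}(a) with $d_i=1$, $r_i=2$, $r-r_i=2(k-1)$. Your observation that $n_i \mid k-1$ gives $n_i \mid 2(k-1)$ (here only the implication is needed, not the equivalence via oddness) is the same bookkeeping step the paper performs.
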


\begin{proof}
(a) Label the vertices along $C_n$ by $0, 1, \ldots, n-1$ consecutively. Since $n$ is odd, one can verify that this is a $\Z_n$-distance antimagic labelling of $C_n$.

(b) By (a), each $C_{n_i}$ is $\Z_{n_i}$-distance antimagic as $n_i$ is odd. Applying Corollary \ref{thm:cart-prod} and noting that all $d_i = 1$ and $r_i = 2$, we have $r - r_i = 2(k-1)$ and so the result follows.
\end{proof}

In part (b) of Theorem \ref{cor:prod-cycle}, $k$ ought to be relatively large comparable to $n_1, \ldots, n_k$. It would be interesting to find other conditions under which the same result holds.

We now give a generalisation of part (a) of Theorem \ref{thm:cart-prod-gen}. To this end we introduce the following concept which will also be used in the next section. Given an $r$-regular graph $G$ with $n$ vertices and an Abelian group $A$ of order $n$, a bijection $f: V(G) \rightarrow A$ is called an \emph{$A$-balanced labelling} if $w(x) = r f(x)$ for every $x \in V(G)$.  

\begin{thm}
\label{thm:cart-prod-gen1}
Let $G_i$ be an $r_i$-regular graph with $n_i \ge 2$ vertices, $1 \le i \le k$. Let $A_i$ be an Abelian group of order $n_i$, $1 \le i \le k$, and let $r = \sum_{i=1}^k r_i$. Suppose $\{1, \ldots, k\}$ is partitioned into, say, $I = \{1, \ldots, k_1\}$, $J = \{k_1 + 1, \ldots, k_2\}$ and $L = \{k_2 + 1, \ldots, k\}$ for some $0 \le k_1 \le k_2 \le k$, possibly with empty parts, such that the following conditions are satisfied:
\begin{itemize}
\item[\rm (a)] for each $i \in I$, $G_i$ is $A_i$-distance magic and $r-r_i$ is coprime to $n_i$; \item[\rm (b)] for each $j \in J$, $G_j$ is $A_j$-distance antimagic and $\exp(A_j)$ is a divisor of $r-r_j$;
\item[\rm (c)] for each $l \in L$, $G_l$ admits an $A_l$-balanced labelling and $r$ is coprime to $n_l$.
\end{itemize}
Then $G_1 \Box \cdots \Box G_k$ is $A_1 \times \cdots \times A_k$-distance antimagic.
\end{thm}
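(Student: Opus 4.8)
The plan is to reuse the product labelling and the coordinatewise weight formula already established in the proof of Theorem \ref{thm:cart-prod-gen}. For each index $i$ I would pick a labelling $g_i : V(G_i) \to A_i$ matching its type: an $A_i$-distance magic labelling for $i \in I$, an $A_j$-distance antimagic labelling for $j \in J$, and an $A_l$-balanced labelling for $l \in L$. Define $f$ exactly as in \eqref{eq:f}. The derivation of \eqref{eq:w} is purely formal and uses nothing about the $g_i$ beyond their being bijections, so it still applies and tells us that the $i$th coordinate of $w(x)$ equals $w_{G_i}(x_i) + (r-r_i)g_i(x_i)$, a quantity depending on $x_i$ only.

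The heart of the argument is to check that, under each of (a), (b), (c), this $i$th coordinate is an \emph{injective} function of $x_i$. For $i \in I$ the distance magic property makes $w_{G_i}(x_i)$ a constant $\mu_i$, so the coordinate is $\mu_i + (r-r_i)g_i(x_i)$; here I would invoke the standard fact that when $\gcd(m, |A|) = 1$ the map $a \mapsto ma$ is an automorphism of the finite Abelian group $A$ (from $m(a-b)=0$ the order of $a-b$ divides both $m$ and $|A|$, hence equals $1$). Since $\gcd(r-r_i, n_i) = 1$, multiplication by $r-r_i$ is a bijection of $A_i$, and composing with the bijection $g_i$ keeps the coordinate injective. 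For $j \in J$ the hypothesis $\exp(A_j) \mid (r-r_j)$ forces $(r-r_j)g_j(x_j) = 0$, so the coordinate reduces to $w_{G_j}(x_j)$, which is injective in $x_j$ because $g_j$ is distance antimagic. For $l \in L$ the balanced condition gives $w_{G_l}(x_l) = r_l g_l(x_l)$, so the coordinate collapses to $r g_l(x_l)$, which is injective because $\gcd(r, n_l) = 1$ makes multiplication by $r$ an automorphism of $A_l$ and $g_l$ is a bijection.

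To finish, I would argue that if $w(x) = w(x')$ then the two weight vectors agree coordinatewise, and the injectivity just established in every coordinate forces $x_i = x'_i$ for all $i$, so $x = x'$; thus $f$ is an $A_1 \times \cdots \times A_k$-distance antimagic labelling. The only step needing genuine care — and the main ``obstacle'' — is the automorphism lemma underpinning the $I$ and $L$ cases; everything else is a clean case analysis built on the already-proven formula \eqref{eq:w}, with the three hypotheses tailored precisely so that each factor's coordinate separately detects its own variable.
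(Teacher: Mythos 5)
Your proposal is correct and follows essentially the same route as the paper: the same product labelling $f$ from \eqref{eq:f}, the same coordinatewise weight formula \eqref{eq:w}, and the same three-way case analysis showing the $t$th component is $\mu_t+(r-r_t)g_t(x_t)$ for $t\in I$, $w_{G_t}(x_t)$ for $t\in J$, and $rg_t(x_t)$ for $t\in L$, each injective in $x_t$. The ``automorphism lemma'' you flag as the main obstacle is handled in the paper by exactly the order-divisibility argument you sketch, so there is nothing to add.
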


\begin{proof}
For $i \in I$, let $g_i$ be an $A_i$-distance magic labelling of $G_i$, with corresponding magic constant $\mu_i$. For $j \in J$, let $g_j$ be an $A_j$-distance antimagic labelling of $G_j$. For $l \in L$, let $g_l$ be an $A_l$-balanced labelling of $G_l$. Then, for every vertex $x = (x_1, \ldots, x_k)$ of $G_1 \Box \cdots \Box G_k$, we have $w_{G_i}(x_i) = \mu_i$ if $i \in I$, $(r-r_j)g_{j}(x_j) = 0$ if $j \in J$ as $\exp(A_j)$ is a divisor of $r-r_j$, and $w_{G_l}(x_l) = r_l g_{l}(x_l)$ if $l \in L$, where $w_{G_t}(x_t)$ is the weight of $x_t$ with respect to $g_t$. Define $f$ as in (\ref{eq:f}). Then the weight of $x$ under $f$ is given by (\ref{eq:w}). Thus the $t$th component of $w(x)$ is equal to $\mu_t + (r-r_t)g_{t}(x_t)$ if $t \in I$, $w_{G_{t}}(x_{t})$ if $t \in J$, and $rg_{t}(x_t)$ if $t \in L$.
For any vertex $x' = (x'_1, \ldots, x'_k) \ne x$ of $G_1 \Box \cdots \Box G_k$, there exists at least one $t$ such that $x_t \ne x'_t$. If $t \in I$, then $(r-r_t)g_{t}(x_t) \ne (r-r_t)g_{t}(x'_t)$ since $r-r_t$ is coprime to $n_t$ and the order of $g_{t}(x_t) - g_{t}(x'_t)$ ($\ne 0$) in $A_t$ is a divisor of $n_t$. If $t \in J$, then
$w_{G_t}(x_t) \ne w_{G_t}(x'_t)$ as $g_t$ is $A_t$-distance antimagic. If $t \in L$, then $rg_{t}(x_t) \ne rg_{t}(x'_t)$ as $r$ is coprime to $n_t$. In any case we have $w(x) \ne w(x')$. Therefore, $f$ is an $A_1 \times \cdots \times A_k$-distance antimagic labelling of $G_1 \Box \cdots \Box G_k$.
\end{proof}

Part (a) of Theorem \ref{thm:cart-prod-gen} can be obtained from Theorem \ref{thm:cart-prod-gen1} by setting $I = L = \emptyset$.

Theorem \ref{thm:cart-prod-gen1} enables us to construct new families of distance antimagic graphs based on known distance antimagic graphs and distance magic graphs. We illustrate this by the following corollary obtained by setting $n_t = 2^{d_t}$ and $A_t = \Z_2^{d_t}$ for $1 \le t \le s+t$, and $I = \{1, \ldots, s\}$, $J = \{s+1, \ldots, s+t\}$ and $L = \emptyset$ in Theorem \ref{thm:cart-prod-gen1}.

\begin{cor}
\label{cor:cart-prod-gen1}
Suppose $G_i$ is an $r_i$-regular $\Z_{2}^{d_i}$-distance magic graph, $1 \le i \le s$, and $G_j$ an $r_j$-regular $\Z_{2}^{d_j}$-distance antimagic graph, $s+1 \le j \le s+t$. Let $r = \sum_{i=1}^{s+t} r_i$ and $d = \sum_{i=1}^{s+t} d_i$. If $r_i$ and $r$ have different parity for $i = 1, \ldots, s$ and the same parity for $i = s+1, \ldots, s+t$, then $G_1 \Box \cdots \Box G_{s+t}$ is $\Z_2^d$-distance antimagic.

In particular, if $G_1$ is a regular $\Z_{2}^{d_1}$-distance magic graph with even degree, and $G_2$ a regular $\Z_{2}^{d_2}$-distance antimagic graph with odd degree, then $G_1 \Box G_{2}$ is $\Z_2^{d_1 + d_2}$-distance antimagic.
\end{cor}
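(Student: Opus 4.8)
The plan is to derive this corollary directly from Theorem \ref{thm:cart-prod-gen1} by choosing the partition $I = \{1, \ldots, s\}$, $J = \{s+1, \ldots, s+t\}$ and $L = \emptyset$, and taking $n_i = 2^{d_i}$ and $A_i = \Z_2^{d_i}$ for each $i$. The only real work is to check that the parity hypotheses on the $r_i$ translate precisely into the divisibility and coprimality conditions (a) and (b) of that theorem; condition (c) is then vacuous because $L$ is empty.

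First I would record two elementary facts about the group $\Z_2^{d_i}$. Its order is $n_i = 2^{d_i}$, a power of $2$, so an integer $m$ is coprime to $n_i$ if and only if $m$ is odd. Moreover $\exp(\Z_2^{d_i}) = 2$, so $\exp(A_i)$ divides an integer $m$ if and only if $m$ is even. These two observations are what convert the abstract hypotheses of Theorem \ref{thm:cart-prod-gen1} into statements purely about the parity of $r - r_i$.

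Next I would verify the two conditions. For $i \in I$ the graph $G_i$ is $\Z_2^{d_i}$-distance magic, so the magic part of (a) holds; and by hypothesis $r_i$ and $r$ have different parity, so $r - r_i$ is odd and hence coprime to $n_i = 2^{d_i}$, giving the coprimality part of (a). For $j \in J$ the graph $G_j$ is $\Z_2^{d_j}$-distance antimagic, so the antimagic part of (b) holds; and by hypothesis $r_j$ and $r$ have the same parity, so $r - r_j$ is even and hence divisible by $\exp(A_j) = 2$, giving the remaining part of (b). All hypotheses of Theorem \ref{thm:cart-prod-gen1} are therefore met, so $G_1 \Box \cdots \Box G_{s+t}$ is $\Z_2^{d_1} \times \cdots \times \Z_2^{d_{s+t}}$-distance antimagic. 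Since $\Z_2^{d_1} \times \cdots \times \Z_2^{d_{s+t}} \cong \Z_2^{d}$ with $d = \sum_{i=1}^{s+t} d_i$, this is exactly the claimed $\Z_2^{d}$-distance antimagicness.

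Finally, the ``in particular'' statement is the special case $s = t = 1$. Here $r_1$ is even and $r_2$ is odd, so $r = r_1 + r_2$ is odd; then $r_1$ and $r$ have different parity while $r_2$ and $r$ have the same parity, so the general statement applies and $G_1 \Box G_2$ is $\Z_2^{d_1 + d_2}$-distance antimagic. I do not expect a genuine obstacle here: the whole content is the observation that, over $2$-groups, ``coprime to the order'' means ``odd'' and ``divisible by the exponent'' means ``even'', so the parity bookkeeping between the conditions on $I$ and on $J$ is the only point requiring care.
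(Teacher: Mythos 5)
Your proposal is correct and follows exactly the route the paper takes: the corollary is obtained from Theorem \ref{thm:cart-prod-gen1} by setting $A_t = \Z_2^{d_t}$, $I = \{1,\ldots,s\}$, $J = \{s+1,\ldots,s+t\}$ and $L = \emptyset$, with the parity conditions translating conditions (a) and (b) via the facts that an integer is coprime to $2^{d_t}$ iff it is odd and is divisible by $\exp(\Z_2^{d_t}) = 2$ iff it is even. The paper leaves this translation implicit, so your write-up just makes explicit the bookkeeping the authors intended.
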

 
The reader is invited to compare the last statement in Corollary \ref{cor:cart-prod-gen1} with Corollary \ref{cor:cart-prod-2}. To satisfy the conditions in Corollary \ref{cor:cart-prod-gen1}, $s$ must be even when $r$ is even, and $t$ must be odd when $r$ is odd.

\section{Labelling direct products}
\label{sec:dir}

The \emph{direct product} $G_1 \times \cdots \times G_d$ of $d$ graphs $G_1, \ldots, G_d$ is defined \cite{IK} to have vertex set $V(G_1) \times \cdots \times V(G_d)$ such that two vertices $(x_1, \ldots, x_d)$ and $(y_1, \cdots, y_d)$ are adjacent if and only if $x_i$ is adjacent to $y_i$ in $G_i$ for $i = 1, \ldots, d$.

\begin{thm}
\label{gr_pr}
Let $G_{i}$ be an $r_{i}$-regular graph with $n_i$ vertices and $A_i$ an Abelian group of order $n_i$, $i=1, \ldots, d$. Denote $r = r_1 \cdots r_d$. If $G_i$ is $A_{i}$-distance antimagic and $n_i$ and $r/r_i$ are coprime for $i=1, \ldots, d$, then $G_1 \times \cdots \times G_d$ is $A_{1} \times \cdots \times A_{d}$-distance antimagic.
\end{thm}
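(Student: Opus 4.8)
The plan is to follow the template of Theorem \ref{thm:cart-prod-gen}, taking the labelling of the product to be the ``coordinatewise product'' of the given labellings and then reducing distinctness of weights to a single coordinate. Concretely, I would fix an $A_i$-distance antimagic labelling $g_i \colon V(G_i) \to A_i$ for each $i$ (one exists by hypothesis) and define $f \colon V(G_1) \times \cdots \times V(G_d) \to A_1 \times \cdots \times A_d$ by
\[
f(x_1, \ldots, x_d) = (g_1(x_1), \ldots, g_d(x_d)).
\]
Since each $g_i$ is a bijection, $f$ is automatically a bijection, so the whole problem comes down to showing that distinct vertices receive distinct weights.

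The heart of the argument is the weight formula, and this is where the structure of the direct product enters. By definition the neighbourhood of $x = (x_1, \ldots, x_d)$ is the full product $N(x_1) \times \cdots \times N(x_d)$, so I would write
\[
w(x) = \sum_{y_1 \in N(x_1)} \cdots \sum_{y_d \in N(x_d)} (g_1(y_1), \ldots, g_d(y_d))
\]
and then read off coordinates. The $i$th coordinate of the summand is $g_i(y_i)$, which is independent of the $y_j$ for $j \ne i$; summing over those $d-1$ free neighbourhoods therefore introduces a multiplicative factor $\prod_{j \ne i} r_j = r/r_i$. This gives
\[
w(x) = \left( \tfrac{r}{r_1}\, w_{G_1}(x_1), \ldots, \tfrac{r}{r_d}\, w_{G_d}(x_d) \right),
\]
where $w_{G_i}(x_i)$ denotes the weight of $x_i$ under $g_i$ and the operation in the $i$th slot is carried out in $A_i$.

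To finish, I would argue coordinate by coordinate exactly as in the proof of Theorem \ref{obs:cay}(a). Given distinct vertices $x$ and $x'$, choose an index $i$ with $x_i \ne x_i'$; antimagicity of $g_i$ gives $w_{G_i}(x_i) \ne w_{G_i}(x_i')$, so $z := w_{G_i}(x_i) - w_{G_i}(x_i')$ is a nonzero element of $A_i$. Were the $i$th coordinates of $w(x)$ and $w(x')$ equal, I would get $(r/r_i)\,z = 0$, forcing the order of $z$ to divide $r/r_i$; but that order also divides $n_i$ and is greater than $1$, contradicting $\gcd(n_i, r/r_i) = 1$. Hence the $i$th coordinates differ, $w(x) \ne w(x')$, and $f$ is the desired $A_1 \times \cdots \times A_d$-distance antimagic labelling.

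I expect the only delicate point to be the bookkeeping in the weight computation: one must track that each coordinate accrues the factor $r/r_i$ --- the product of the degrees of the \emph{other} factors --- rather than $r$ or $r_i$. Once this is correctly identified, the coprimality hypothesis $\gcd(n_i, r/r_i) = 1$ slots in precisely as needed, with no extra cases, so the remainder is a direct transcription of the order-divisibility argument already used for Cayley graphs.
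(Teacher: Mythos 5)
Your proposal is correct and follows essentially the same route as the paper: the same coordinatewise labelling $f$, the same weight identity $w(x) = \bigl((r/r_1)w_{G_1}(x_1), \ldots, (r/r_d)w_{G_d}(x_d)\bigr)$, and the same use of $\gcd(n_i, r/r_i)=1$ to see that multiplication by $r/r_i$ is injective on $A_i$ (the paper phrases this as the map $(a_1,\ldots,a_d)\mapsto((r/r_1)a_1,\ldots,(r/r_d)a_d)$ being a bijection of $A$, while you give the equivalent order-divisibility argument coordinate by coordinate). No gaps.
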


\begin{proof}
Since $G_i$ is $A_{i}$-distance antimagic, it admits at least one $A_{i}$-distance antimagic labelling, say, $g_{i}\colon V(G_{i}) \rightarrow A_{i}$, $i=1, \ldots, d$. Denote $G = G_1 \times \cdots \times G_d$ and $A = A_{1} \times \cdots \times A_{d}$. Define 
\begin{equation*}
f(x) = (g_1(x_1), \ldots, g_d(x_d)),\; \mbox{\rm for\; $x = (x_1, \ldots, x_d)$\; with $x_i \in V(G_i)$}.
\end{equation*}
Then $f$ is a bijection from $V(G)$ to $A$. The weight of $x$ under $f$ is given by
\begin{eqnarray*}
w(x) & = & \sum_{(y_1, \ldots, y_d) \in N_{G}(x)}(g_1(y_1), \ldots, g_d(y_d))\\[0.2cm]
&=& \left(\sum_{y_i \in N_{G_i}(x_i),\, i \ne 1}g_{1}(y_1), \ldots, \sum_{y_i \in N_{G_i}(x_i),\, i \ne d}g_{d}(y_d)\right) \\[0.2cm]
&=& ((r/r_1)w_{G_1}(x_1), \ldots, (r/r_d)w_{G_d}(x_d)).
\end{eqnarray*}%
Since $\gcd(r/r_i, n_i)=1$ for each $i$, the mapping defined by $(a_1, \ldots, a_d) \mapsto ((r/r_1)a_1, \ldots, (r/r_d)a_d)$, $(a_1, \ldots, a_d) \in A$, is a bijection from $A$ to itself. Combining this with the assumption that each $g_i$ is an $A_i$-distance antimagic labelling, we obtain that $f$ is an $A$-distance antimagic labelling of $G$.
\end{proof}

Denote $G^d_{\times} = G \times \cdots \times G$ ($d$ factors) and $A^d = A \times \cdots \times A$ ($d$ factors). Theorem \ref{gr_pr} implies:

\begin{cor}
Suppose $G$ is an $r$-regular $A$-distance antimagic graph with $n$ vertices, where $A$ is an Abelian group of order $n$. If $n$ and $r$ are coprime, then for any integer $d \ge 1$, $G^d_{\times}$ is $A^d$-distance antimagic.
\end{cor}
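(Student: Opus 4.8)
The plan is to obtain this corollary as the equal-factor specialisation of Theorem \ref{gr_pr}. First I would set $G_i = G$, $A_i = A$, $n_i = n$ and $r_i = r$ for every $i = 1, \ldots, d$, so that $G_1 \times \cdots \times G_d = G^d_{\times}$ and $A_1 \times \cdots \times A_d = A^d$. By hypothesis each $G_i$ is then $r$-regular on $n$ vertices and $A_i$-distance antimagic, so all of the structural assumptions of Theorem \ref{gr_pr} are met once the arithmetic condition is checked.

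The only hypothesis of Theorem \ref{gr_pr} that requires verification is the coprimality condition, which asks that $n_i$ and $(r_1 \cdots r_d)/r_i$ be coprime for each $i$. After the specialisation above, $(r_1 \cdots r_d)/r_i = r^d/r = r^{d-1}$, so this condition becomes $\gcd(n, r^{d-1}) = 1$. For $d = 1$ this reads $\gcd(n, 1) = 1$ and holds trivially; for $d \ge 2$, any prime dividing both $n$ and $r^{d-1}$ would divide $r$, contradicting the assumption $\gcd(n, r) = 1$. Hence $\gcd(n, r^{d-1}) = 1$ in all cases.

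With this verified, Theorem \ref{gr_pr} applies directly and yields that $G^d_{\times}$ is $A^d$-distance antimagic, as claimed. I do not anticipate any genuine obstacle: the whole content is the elementary number-theoretic observation that coprimality with $r$ is inherited by every power of $r$, which is exactly what converts the theorem's requirement $\gcd(n_i, (r_1 \cdots r_d)/r_i) = 1$ into the corollary's hypothesis $\gcd(n, r) = 1$.
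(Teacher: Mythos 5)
Your proposal is correct and matches the paper's intent exactly: the corollary is stated as an immediate specialisation of Theorem \ref{gr_pr} with all factors equal, and the only thing to check is that $\gcd(n,r)=1$ implies $\gcd(n,r^{d-1})=1$, which you verify correctly. Nothing further is needed.
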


Since $C_n$ is $\Z_n$-distance antimagic when $n$ is odd (Theorem \ref{cor:prod-cycle}), Theorem \ref{gr_pr} implies:
\begin{cor}
For any odd integers $n_1, \ldots, n_d \ge 3$, $C_{n_1} \times \cdots \times C_{n_d}$ is $\Z_{n_1} \times \cdots \times \Z_{n_d}$-distance antimagic.

In particular, for any integer $d \ge 1$ and odd integer $n \ge 3$, $(C_n)_{\times}^d$ is $\Z_{n}^d$-distance antimagic.
\end{cor}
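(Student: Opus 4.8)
The plan is to apply Theorem \ref{gr_pr} directly, drawing the required antimagic labellings of the factor cycles from Theorem \ref{cor:prod-cycle}(a). First I would set $G_i = C_{n_i}$ and $A_i = \Z_{n_i}$ for $1 \le i \le d$. Since each cycle is $2$-regular, we have $r_i = 2$ for every $i$, so that $r = r_1 \cdots r_d = 2^d$ and hence $r/r_i = 2^{d-1}$ for each $i$.

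Next I would verify the two hypotheses of Theorem \ref{gr_pr}. Because each $n_i$ is odd and at least $3$, Theorem \ref{cor:prod-cycle}(a) guarantees that $C_{n_i}$ is $\Z_{n_i}$-distance antimagic, so the antimagic hypothesis holds for every factor. For the coprimality condition, I would observe that $\gcd(n_i, r/r_i) = \gcd(n_i, 2^{d-1}) = 1$, since $n_i$ is odd while $r/r_i$ is a power of $2$. With both hypotheses in place, Theorem \ref{gr_pr} immediately yields that $C_{n_1} \times \cdots \times C_{n_d}$ is $\Z_{n_1} \times \cdots \times \Z_{n_d}$-distance antimagic. For the final ``in particular'' statement I would simply specialise to $n_1 = \cdots = n_d = n$ and $A_1 = \cdots = A_d = \Z_n$, so that the product graph becomes $(C_n)_{\times}^d$ and the group becomes $\Z_n^d$; the general assertion then gives the claim.

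There is essentially no obstacle here, as the entire argument collapses to checking that an odd integer is coprime to a power of two, which is automatic. The only subtlety worth flagging is that the oddness assumption on the $n_i$ is used in two distinct places: once to invoke the cycle labelling supplied by Theorem \ref{cor:prod-cycle}(a), and once to ensure the coprimality of $n_i$ with $r/r_i = 2^{d-1}$. Both uses trace back to the same hypothesis, so no additional conditions are needed, and the corollary follows with no further computation.
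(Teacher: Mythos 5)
Your proposal is correct and is exactly the paper's argument: the corollary is stated as an immediate consequence of Theorem \ref{gr_pr} combined with part (a) of Theorem \ref{cor:prod-cycle}, with the coprimality of each odd $n_i$ to $r/r_i = 2^{d-1}$ being the only thing to check. Nothing further is needed.
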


Since $K_n$ is $\Z_n$-distance antimagic (Theorem \ref{ham}), by Theorem \ref{gr_pr} we obtain:
\begin{cor}
Let $n_1, \ldots, n_d \ge 3$ be integers such that $n_i$ and $n_j - 1$ are coprime for distinct $i, j$. Then $K_{n_1} \times \cdots \times K_{n_d}$ is $\Z_{n_1} \times \cdots \times \Z_{n_d}$-distance antimagic.

In particular, for any integers $d \ge 1$ and $n \ge 3$, $(K_n)^d_{\times}$ is $\Z_{n}^d$-distance antimagic.
\end{cor}

Denote by $D_n=C_n\Box P_2$ the \emph{prism} of $2n \ge 6$ vertices.

\begin{lemma}
\label{obs:Cn-P2}
Let $n \geq 4$ be an integer not divisible by $3$. Then $D_n$ is $\Z_{2n}$-distance antimagic.
\end{lemma}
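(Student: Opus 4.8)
The plan is to give an explicit labelling and verify it directly, using the condition $3 \nmid n$ only at the very end. Write $D_n = C_n \Box P_2$ with vertex set $\{(i,j) : i \in \Z_n,\ j \in \{0,1\}\}$, where $(i,0)$ and $(i,1)$ are joined by a rung, and each $(i,j)$ is also joined to $(i-1,j)$ and $(i+1,j)$ (indices mod $n$); thus $D_n$ is $3$-regular. Note first that the necessary condition in Corollary~\ref{obs:zn-even} does not obstruct us here, since the degree $r=3$ is odd. Define $f\colon V(D_n)\to\Z_{2n}$ by
\[
f(i,0) = 2i, \qquad f(i,1) = 2i+1 \pmod{2n}.
\]
This is well defined on $\Z_n$ because $2(i+n)\equiv 2i \pmod{2n}$, and it is a bijection: as $i$ runs over $\{0,\dots,n-1\}$ the values $2i$ give all even residues and $2i+1$ all odd residues of $\Z_{2n}$.

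Next I would compute the two weight formulas. Using the neighbourhoods above,
\[
w(i,0) = f(i-1,0)+f(i+1,0)+f(i,1) = 2(i-1)+2(i+1)+(2i+1) = 6i+1,
\]
\[
w(i,1) = f(i-1,1)+f(i+1,1)+f(i,0) = (2(i-1)+1)+(2(i+1)+1)+2i = 6i+2,
\]
all congruences taken mod $2n$. The one point that needs care is the cyclic wraparound at $i=0$ and $i=n-1$: there the indices $i\pm1$ are reduced mod $n$, but the displayed formulas still hold because the resulting correction is a multiple of $2n$ (concretely, $6n\equiv 0\pmod{2n}$, which also shows $6i \bmod 2n$ is well defined for $i\in\Z_n$). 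I would check the two boundary cases explicitly to confirm the uniform formulas $w(i,0)=6i+1$ and $w(i,1)=6i+2$.

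Finally, the distinctness argument is where the hypothesis $3\nmid n$ enters, and this is the crux of the proof. The even residues of $\Z_{2n}$ form a subgroup isomorphic to $\Z_n$ via $2k\leftrightarrow k$, and under this identification the map $i\mapsto 6i = 2(3i)$ corresponds to $i\mapsto 3i \bmod n$. Since $3\nmid n$ we have $\gcd(3,n)=1$, so $i\mapsto 3i$ permutes $\Z_n$; hence $i\mapsto 6i \bmod 2n$ is a bijection from $\Z_n$ onto the set of $n$ even residues of $\Z_{2n}$. Consequently $\{w(i,0)\}=\{6i+1\}$ runs over all $n$ odd residues while $\{w(i,1)\}=\{6i+2\}$ runs over all $n$ even residues, so the $2n$ weights are pairwise distinct and $f$ is a $\Z_{2n}$-distance antimagic labelling. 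I do not expect a genuine obstacle beyond bookkeeping; the only subtlety is making the $\gcd(3,n)=1$ step precise, and it is worth remarking that when $3\mid n$ this labelling collapses (e.g. $6i\bmod 2n$ takes only the values $0$ and $6$ for $n=6$), which explains why the hypothesis is exactly $3\nmid n$.
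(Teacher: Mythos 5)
Your proposal is correct and follows essentially the same route as the paper: the identical labelling $f(i,0)=2i$, $f(i,1)=2i+1$, the same weight formulas $6i+1$ and $6i+2$, and the same use of $\gcd(3,n)=1$ to get distinctness (the paper phrases this via the bijection $x\mapsto 3x+1$ of $\Z_{2n}$, while you split by parity, which is if anything a slightly more careful write-up of the same point).
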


\begin{proof}
Denote the vertices of $D_n$ by $x_{i,j}$ such that $x_{i,j}x_{i+1,j}$ and $x_{i,1}x_{i,2}$ are edges of $D_n$ for $i = 0, 1, \ldots, n-1$ and $j = 1, 2$, where the first subscript is taken modulo $n$. Define
$$
f(x_{i,j})=\left\{
             \begin{array}{ll}
             2i, & \hbox { if $j=1$}\\ [0.2cm]
             2i+1, & \hbox{ if $j=2$}\\
             \end{array}
             \right.
$$
Then
$$
w(x_{i,1})= f(x_{i-1},1)+f(x_{i+1},1)+f(x_i,2) \equiv 6i+1 \pmod {2n}
$$
$$
w(x_{i,2})= f(x_{i-1},2)+f(x_{i+1},2)+f(x_i,1) \equiv 6i+2 \pmod {2n}.
$$
Since $n \neq 0 \pmod 3$, $g(x)=3x+1$, $x \in \Z_{2n}$ defines a bijection from $\Z_{2n}$ to itself. Therefore, the weights are all different elements of $\Z_{2n}$.
\end{proof}

Combining Theorem \ref{gr_pr} and Lemma \ref{obs:Cn-P2}, we obtain the following result.

\begin{cor}
Let $n_1, \ldots, n_d \ge 4$ be integers not divisible by $3$. Then $D_{n_1} \times \cdots \times D_{n_d}$ is $\Z_{n_1} \times \cdots \times \Z_{n_d}$-distance antimagic.

In particular, for any $d \ge 1$, and any $n \ge 4$ not divisible by $3$, $(D_n)_{\times}^d$ is $\Z_{n}^d$-distance antimagic.
\end{cor}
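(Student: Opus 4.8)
The plan is to obtain this corollary as a direct application of Theorem \ref{gr_pr}, with the prisms $D_{n_i}$ playing the role of the factor graphs $G_i$ and the cyclic groups $\Z_{n_i}$ playing the role of the factor groups $A_i$. Theorem \ref{gr_pr} requires two inputs for each factor: that $D_{n_i}$ be distance antimagic over its factor group, and that the order of that group be coprime to $r/r_i$. First I would invoke Lemma \ref{obs:Cn-P2}, which supplies the required distance antimagic labelling of each prism $D_{n_i}$ precisely under the standing hypotheses $n_i \ge 4$ and $3 \nmid n_i$; this is where those two hypotheses are consumed at the level of the individual factors.

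Next I would assemble the regularity data that feeds the coprimality condition. Since $D_{n_i} = C_{n_i} \Box P_2$ is the Cartesian product of the $2$-regular cycle $C_{n_i}$ with the $1$-regular edge $P_2$, every prism is $3$-regular, so $r_i = 3$ for all $i$. Consequently $r = r_1 \cdots r_d = 3^d$ and $r/r_i = 3^{d-1}$ is the same quantity for every index $i$. The coprimality hypothesis of Theorem \ref{gr_pr} therefore collapses to the single uniform requirement $\gcd(n_i, 3^{d-1}) = 1$, which is equivalent to $3 \nmid n_i$ and hence holds by assumption.

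With both hypotheses of Theorem \ref{gr_pr} verified for every factor, the theorem applies directly and produces the desired $\Z_{n_1} \times \cdots \times \Z_{n_d}$-distance antimagic labelling of $D_{n_1} \times \cdots \times D_{n_d}$, which is the main assertion. The \emph{in particular} statement is then just the diagonal case $n_1 = \cdots = n_d = n$: here $\Z_{n_1} \times \cdots \times \Z_{n_d} = \Z_n^d$ and $D_{n_1} \times \cdots \times D_{n_d} = (D_n)_{\times}^d$, so the general conclusion specialises immediately.

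I do not expect a genuine obstacle here, since all the substantive work has already been done in Lemma \ref{obs:Cn-P2} and Theorem \ref{gr_pr}; the proof is essentially a verification that the hypotheses line up. The only point deserving a moment's care is the observation that, because all factors are $3$-regular, the quantities $r/r_i$ coincide and the many coprimality conditions of Theorem \ref{gr_pr} reduce to the clean condition $3 \nmid n_i$ on each factor.
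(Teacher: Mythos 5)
Your route is exactly the paper's: the corollary is stated there as an immediate consequence of combining Theorem \ref{gr_pr} with Lemma \ref{obs:Cn-P2}, and your verification of the hypotheses (each $D_{n_i}$ is $3$-regular, so $r/r_i=3^{d-1}$, and the coprimality condition reduces to $3\nmid n_i$) is the intended one.

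One point you should not have let pass, however: Theorem \ref{gr_pr} requires $|A_i|=|V(G_i)|$, and the prism $D_{n_i}$ has $2n_i$ vertices, not $n_i$. Lemma \ref{obs:Cn-P2} accordingly gives a $\Z_{2n_i}$-distance antimagic labelling, so the group you must feed into Theorem \ref{gr_pr} is $A_i=\Z_{2n_i}$, and the conclusion it delivers is that $D_{n_1}\times\cdots\times D_{n_d}$ is $\Z_{2n_1}\times\cdots\times\Z_{2n_d}$-distance antimagic (the coprimality check is unaffected, since $\gcd(2n_i,3^{d-1})=1$ is still equivalent to $3\nmid n_i$). The statement as printed, with groups $\Z_{n_i}$, cannot be literally correct -- there is no bijection from a graph on $2^d n_1\cdots n_d$ vertices to a group of order $n_1\cdots n_d$ -- so this is a typo in the corollary that your write-up reproduces rather than catches. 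Your proof is otherwise sound once the groups are corrected.
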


\section{$\Z_{n}$-distance antimagic product graphs}
\label{sec:Zn}

A \emph{balanced labelling} of an $r$-regular graph $G$ with $n$ vertices is defined as a $\Z_n$-balanced labelling of $G$; that is, a bijection $f: V(G) \rightarrow \Z_n$ such that $w(x) \equiv r f(x) \pmod n$ for every $x \in V(G)$.

\begin{lemma}
\label{obs:comp1}
\begin{itemize}
\item[\rm (a)] The cycle $C_n$ for any $n \ge 3$ admits a balanced labelling.
\item[\rm (b)] The complete graph $K_n$ on $n \geq 2$ vertices admits a balanced labelling if and only if $n$ is odd.
\end{itemize}
\end{lemma}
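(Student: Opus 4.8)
The plan is to exhibit explicit balanced labellings by direct computation, exploiting the fact that in both cases the weight of a vertex can be written in closed form in terms of its own label. Recall that a balanced labelling of an $r$-regular graph on $n$ vertices is a bijection $f : V(G) \to \Z_n$ with $w(x) \equiv r f(x) \pmod n$ for all $x$.

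For part (a), since $C_n$ is $2$-regular, I would label the vertices consecutively around the cycle as $v_0, v_1, \ldots, v_{n-1}$ and set $f(v_i) = i$, regarded as an element of $\Z_n$. The two neighbours of $v_i$ are $v_{i-1}$ and $v_{i+1}$ (subscripts modulo $n$), so $w(v_i) = f(v_{i-1}) + f(v_{i+1}) \equiv (i-1) + (i+1) = 2i \equiv 2 f(v_i) \pmod n$. Since this holds for every $i$ and $f$ is clearly a bijection onto $\Z_n$, the consecutive labelling is balanced for every $n \ge 3$, and no case distinction is needed.

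For part (b), the key observation is that $K_n$ is $(n-1)$-regular and the neighbourhood of any vertex is every other vertex. Hence for \emph{any} bijection $f : V(K_n) \to \Z_n$ one has $w(x) = \sigma - f(x)$, where $\sigma = \sum_{y} f(y) \equiv \frac{n(n-1)}{2} \pmod n$ is independent of the choice of $f$, being simply the sum of all elements of $\Z_n$. Because $(n-1) f(x) \equiv -f(x) \pmod n$, the balanced condition $w(x) \equiv (n-1) f(x) \pmod n$ reduces, after cancelling $-f(x)$ from both sides, to the single congruence $\sigma \equiv 0 \pmod n$, which no longer involves $x$ or the particular labelling.

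The remaining point, and the only one requiring care, is to evaluate $\sigma$ modulo $n$: when $n$ is odd, $\frac{n(n-1)}{2} = n \cdot \frac{n-1}{2} \equiv 0$, whereas when $n$ is even, $\frac{n(n-1)}{2} \equiv \frac{n}{2}(n-1) \equiv \frac{n}{2} \not\equiv 0 \pmod n$. Thus $\sigma \equiv 0 \pmod n$ precisely when $n$ is odd. Since the balanced condition was shown to be equivalent to $\sigma \equiv 0$ uniformly—for every vertex and every bijection simultaneously—this settles both directions at once: every bijection is balanced when $n$ is odd, and none is balanced when $n$ is even. I do not anticipate a genuine obstacle here; the entire content is the closed-form expression $w(x) = \sigma - f(x)$ together with the parity computation of $\sigma$.
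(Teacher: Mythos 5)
Your proof is correct and follows essentially the same route as the paper: the consecutive labelling of $C_n$ with weight $2i \equiv 2f(v_i)$, and for $K_n$ the identity $w(x)=\sigma-f(x)$ combined with $(n-1)f(x)\equiv -f(x)$ and the evaluation $\sigma\equiv n(n-1)/2 \pmod n$. Your only (harmless) refinement is observing that the balanced condition for $K_n$ is independent of the chosen bijection, which lets you dispatch both directions of the equivalence at once, whereas the paper exhibits a specific labelling for odd $n$ and argues the converse separately.
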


\begin{proof}
(a) The labelling that sequentially assigns $0, 1, \ldots, n-1$ to the vertices of $C_n$ along the cycle is balanced.

(b) Denote the vertices of $K_n$ by $x_i$, $1 \le i \le n$. Define $f: V(G) \rightarrow \Z_n$ by $f(x_i)=i-1$.
Then $(n-1)f(x_i) \equiv 1-i \pmod n$ for each $i$. If $n$ is odd, then $\sum_{i=1}^n f(x_i) \equiv n(n-1)/2 \equiv 0 \pmod n$ and the weight of $x_i$ is $w(x_i) = \sum_{j=1}^n f(x_j)-f(x_i) \equiv 1-i \equiv (n-1)f(x_i)$. Thus $f$ is a balanced labelling of $K_n$ when $n$ is odd.

Conversely, suppose $K_n$ admits a balanced labelling $f: V(G) \rightarrow \Z_n$. Then $w(x_i)=(n-1)f(x_i) \equiv -f(x_i) \pmod n$ for each $i$, and on the other hand $w(x_i)= \sum_{j=1}^n f(x_j)-f(x_i)$. Since $f$ is a bijection, we have
$$
\sum_{j=1}^n f(x_j) \equiv \sum_{j=1}^n j \equiv
\left\{
\begin{array}{ll}
0,               & \mbox{if } n \mbox{ is odd},\\ [0.2cm]
\frac{n}{2}, & \mbox{if } n \mbox{ is even}.
\end{array}
\right.
$$
It follows that $n$ must be odd.
\end{proof}

\begin{thm}
\label{thm:bal}
Suppose $G_i$ is an $r_i$-regular graph with $n_i$ vertices which admits a balanced labelling, $1 \le i \le k$. Suppose further that $r_i \le n_j$ for any $1 \le i, j \le k$ and that $r = \sum_{i=1}^k r_i$ is coprime to $n_1 \cdots n_k$. Then $G_1 \Box \cdots \Box G_k$ is $\Z_{n_1 \cdots n_k}$-distance antimagic.
\end{thm}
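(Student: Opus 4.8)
The plan is to combine the given balanced labellings into a single mixed-radix labelling of the product and then to verify that the induced weight map is injective by stripping off one factor at a time through a nested chain of moduli. Set $M_1 = 1$ and $M_i = n_1 \cdots n_{i-1}$ for $2 \le i \le k$, so $M_{k+1} = N := n_1 \cdots n_k$. For each $i$ fix a balanced labelling $g_i \colon V(G_i) \to \{0, 1, \ldots, n_i - 1\}$ (viewing the values of the labelling as integer representatives), and define $f \colon V(G_1 \Box \cdots \Box G_k) \to \Z_N$ by $f(x) = \sum_{i=1}^{k} M_i\, g_i(x_i)$ for $x = (x_1, \ldots, x_k)$. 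Since $0 \le g_i(x_i) \le n_i - 1$, the right-hand side is the unique mixed-radix expansion of an integer in $\{0, \ldots, N-1\}$, so $f$ is a bijection; the whole task is then to show that the weights $w(x)$ are pairwise distinct in $\Z_N$.

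Next I would compute the weight explicitly. In the Cartesian product a neighbour of $x$ is obtained by replacing a single coordinate $x_i$ by a neighbour $x_i' \in N_{G_i}(x_i)$, and this replacement alters $f$ by exactly $M_i\big(g_i(x_i') - g_i(x_i)\big)$. Summing over all $r = \sum_{i} r_i$ neighbours and writing $W_i(x_i) = \sum_{x_i' \in N_{G_i}(x_i)} g_i(x_i')$ yields
\[
w(x) \equiv \sum_{i=1}^{k} M_i\big[(r - r_i)\, g_i(x_i) + W_i(x_i)\big] \pmod{N}.
\]
Abbreviating the bracketed quantity by $h_i(x_i)$ and invoking the defining identity $W_i(x_i) \equiv r_i\, g_i(x_i) \pmod{n_i}$ of a balanced labelling, I obtain the key congruence $h_i(x_i) \equiv r\, g_i(x_i) \pmod{n_i}$. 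Thus, modulo $n_i$, the $i$-th summand behaves like $r$ times the $i$-th mixed-radix digit.

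The core of the proof is the injectivity of $w$, which I would prove by induction on the coordinate index. Suppose $w(x) \equiv w(x') \pmod N$ and put $\delta_i = h_i(x_i) - h_i(x_i')$, so that $\sum_{i=1}^{k} M_i \delta_i \equiv 0 \pmod N$ while $\delta_i \equiv r\big(g_i(x_i) - g_i(x_i')\big) \pmod{n_i}$. Assume $x_1 = x_1', \ldots, x_{m-1} = x_{m-1}'$, whence $\delta_1 = \cdots = \delta_{m-1} = 0$; the surviving tail $\sum_{i \ge m} M_i \delta_i$ is divisible by $M_m$, and cancelling this factor against the matching factor $M_m$ of $N$ leaves a sum that is $\equiv 0 \pmod{n_m \cdots n_k}$. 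Reducing it modulo $n_m$ annihilates every term but the one with $i = m$ and forces $\delta_m \equiv 0 \pmod{n_m}$, that is $r\big(g_m(x_m) - g_m(x_m')\big) \equiv 0 \pmod{n_m}$. Because $\gcd(r, N) = 1$ gives $\gcd(r, n_m) = 1$, this yields $g_m(x_m) \equiv g_m(x_m') \pmod{n_m}$, and since both values lie in $\{0, \ldots, n_m - 1\}$ and $g_m$ is a bijection we conclude $x_m = x_m'$. The induction delivers $x = x'$, so $w$ is injective and hence a bijection onto $\Z_N$, which is precisely the assertion that $f$ is a $\Z_N$-distance antimagic labelling.

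The step I expect to require the most care is exactly this descent, because each $h_i(x_i)$ may be as large as $r(n_i - 1)$, so the integer $\sum_i M_i h_i(x_i)$ can wrap around $N$ repeatedly and one cannot simply read the digits off the residue $w(x) \bmod N$. The device that avoids this carrying problem is to argue through the nested divisor chain $n_1 \mid n_1 n_2 \mid \cdots \mid N$ rather than through bounded integers, with the coprimality $\gcd(r, N) = 1$ making each single-coordinate extraction reversible. I would note that the hypothesis $r_i \le n_j$ is what confines the integer weight $\sum_i M_i h_i(x_i)$ to a controlled range and would be the linchpin of an alternative, purely counting argument; in the modular formulation above the work is instead carried by the coprimality condition together with the ranges of the $g_i$.
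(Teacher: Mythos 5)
Your proof is correct, and in the crucial injectivity step it takes a genuinely different --- and in fact more careful --- route than the paper. Both arguments use the identical mixed-radix labelling $f(x)=\sum_i M_i g_i(x_i)$ and arrive at the same integer identity $w(x)=\sum_i M_i\bigl[(r-r_i)g_i(x_i)+W_i(x_i)\bigr]$. The paper then reduces each bracket modulo $n_i$ using the balanced property and concludes $w(x)\equiv r f(x)\pmod{n_1\cdots n_k}$, finishing in one stroke via $\gcd(r,n_1\cdots n_k)=1$ and the bijectivity of $f$. That reduction is not valid: replacing the $i$-th bracket by its residue modulo $n_i$ changes the sum by a multiple of $n_1\cdots n_i$ only, not of $N=n_1\cdots n_k$, so the cross-terms do not vanish. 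Indeed, for $C_3\Box C_3$ with the standard balanced labellings one gets $w(x)\equiv f(x)+3\pmod 9$ rather than $4f(x)$, so the paper's claimed congruence is false (though its conclusion survives, since $x\mapsto f(x)+3$ is still a bijection). Your descent through the nested chain $n_1\mid n_1n_2\mid\cdots\mid N$, which only uses $h_i(x_i)\equiv r\,g_i(x_i)\pmod{n_i}$ and extracts one coordinate at a time, is exactly what is needed to close this gap; the price is a short induction in place of a one-line conclusion, and the reward is a sound proof that, as you observe, never uses the hypothesis $r_i\le n_j$ at all. You might state explicitly that the base case $m=1$ of the induction is the same computation with an empty list of fixed coordinates, but nothing of substance is missing.
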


\begin{proof}
Let $g_i: V(G_i) \rightarrow \{0, 1, \ldots, n_i - 1\}$ be a balanced labelling of $G_i$, $1 \le i \le k$. Define  
\bea
f(x) & = & (g_1(x_1)\,\mbox{mod}\,{n_1}) + (g_2(x_2)\,\mbox{mod}\,{n_2})\ n_1 + (g_3(x_3)\,\mbox{mod}\,{n_3})\ n_1n_2 \non \\
& & \quad \quad + \cdots + (g_k(x_k)\,\mbox{mod}\,{n_k})\ n_1 \cdots n_{k-1} \label{eq:bal}
\eea
for $x = (x_1, \ldots, x_k) \in V(G_1) \times \cdots \times V(G_k)$. Then $f$ is a well-defined mapping from $V(G_1) \times \cdots \times V(G_k)$ to $\Z_{n_1 \cdots n_k}$ since it takes minimum value 0 and maximum value $(n_1 - 1) + (n_2 - 1)n_1 + \cdots + (n_k - 1)n_1 \cdots n_{k-1} = n_1 \cdots n_{k} - 1$. Moreover, $f$ is injective, and hence must be bijective since $|V(G_1) \times \cdots \times V(G_k)| = n_1 \cdots n_k$. In fact, if $f(x) = f(x')$ but $x \ne x'$, then $g_i(x_i) \ne g_i(x'_i)$ for at least one $i$ as all mappings $g_j$ are bijective. Let $t$ be the largest subscript such that $g_t(x_t) \ne g_t(x'_t)$. Then $g_1(x_1) + g_2(x_2)n_1 + \cdots + g_t(x_t)n_1 \cdots n_{t-1}  = g_1(x'_1) + g_2(x'_2)n_1 + \cdots + g_t(x'_t)n_1 \cdots n_{t-1}$. Without loss of generality we may assume $g_t(x_t) > g_t(x'_t)$. Then $(g_t(x_t) - g_t(x'_t)) n_1 \cdots n_{t-1} = (g_1(x'_1) - g_1(x_1)) + (g_2(x'_2) - g_2(x_2)) n_1 + \cdots + (g_{t-1}(x'_{t-1}) - g_{t-1}(x_{t-1})) n_1 \cdots n_{t-2}$. However, the right-hand side of this equality is no more than $(n_1 - 1) + (n_2 - 1) n_1 + \cdots + (n_{t-1} - 1) n_1 \cdots n_{t-2} = n_1 \cdots n_{t-1} - 1$, but the left-hand side of it is no less than $n_1 \cdots n_{t-1}$. This contradiction shows that $f$ is a bijection.

Denote $w_{G_i}(x_i) = \sum_{x_i':\, x_i x_i' \in E(G_i)} g_i(x'_i)$ for $x_i \in V(G_i)$, $1 \le i \le k$. With congruence modulo $n_1 \cdots n_k$, we have
\bean
w(x) & \equiv & \sum_{i=1}^k\; \sum_{x_i':\, x_i x_i' \in E(G_i)} f(x_1, \ldots, x_{i-1}, x'_i, x_{i+1}, \ldots, x_k) \\
& \equiv & \sum_{i=1}^k\; \sum_{x_i':\, x_i x_i' \in E(G_i)} \{(g_1(x_1)\,\mbox{mod}\,{n_1}) + \cdots + (g_i(x'_i)\,\mbox{mod}\,{n_i})\ n_1 \cdots n_{i-1} \\
& & \quad \quad + \cdots + (g_k(x_k)\,\mbox{mod}\,{n_k})\ n_1 \cdots n_{k-1}\} \\
& \equiv & \sum_{i=1}^k \{(r_i g_1(x_1)\,\mbox{mod}\,{n_1})\ + \cdots + (w_{G_i}(x_i)\,\mbox{mod}\,{n_i})\ n_1 \cdots n_{i-1} \\
& & \quad \quad + \cdots + (r_i g_k(x_k)\,\mbox{mod}\,{n_k})\ n_1 \cdots n_{k-1}\} \\
& \equiv & r f(x) + \sum_{i=1}^k \{(w_{G_i}(x_i) - r_i g_i(x_i))\,\mbox{mod}\,{n_i}\} n_1 \cdots n_{i-1} \\
& \equiv & r f(x).
\eean
In the last two steps we used the assumption that $r_i \le n_j$ for each pair $i, j$ and that $g_i$ is a balanced labelling of $G_i$. (Since $r_i \le n_j$, we have $(r_i g_j(x_j))\,\mbox{mod}\,{n_j} = r_i \cdot (g_j(x_j)\,\mbox{mod}\,{n_j})$.) If $w(x) \equiv w(x')$ (mod $n_1 \cdots n_k$), then $r(f(x) - f(x')) \equiv 0$ (mod $n_1 \cdots n_k$), which implies $f(x) \equiv f(x')$ since $r$ is coprime to $n_1 \cdots n_k$. Since $f$ is bijective as shown above, it follows that $f$ is a $\Z_{n_1 \cdots n_k}$-distance antimagic labelling of $G_1 \Box \cdots \Box G_k$.
\end{proof}

Denote $G^k_{\Box} = G \Box \cdots \Box G$ ($k$ factors). We have the following corollary of Theorem \ref{thm:bal} and Corollary \ref{obs:zn-even}.

\begin{cor}
\label{cor:bal}
Let $n_1, \ldots, n_k \ge 3$ be (not necessarily distinct) integers such that $k$ is coprime to $n_1 \cdots n_k$. Then $C_{n_1} \Box \cdots \Box C_{n_k}$ is $\Z_{n_1 \cdots n_k}$-distance antimagic if and only if all $n_1, \ldots, n_k$ are odd.

In particular, for any integer $k \ge 1$ and odd integer $n \ge 3$ that are coprime, $(C_n)_{\Box}^k$ is $\Z_{n^k}$-distance antimagic.
\end{cor}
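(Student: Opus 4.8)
The statement is a biconditional about $G = C_{n_1} \Box \cdots \Box C_{n_k}$ together with a diagonal special case, and my plan is to obtain each direction directly from the machinery already established rather than build any labelling by hand. For the sufficiency direction I assume every $n_i$ is odd and aim to verify the three hypotheses of Theorem \ref{thm:bal} for the family $G_i = C_{n_i}$ with $r_i = 2$. First, each $C_{n_i}$ admits a balanced labelling by part (a) of Lemma \ref{obs:comp1}. Second, the condition $r_i \le n_j$ for all $i, j$ is immediate since $r_i = 2$ and every $n_j \ge 3$. Third, I must check that $r = \sum_{i=1}^k r_i = 2k$ is coprime to $N := n_1 \cdots n_k$: because all $n_i$ are odd, $N$ is odd, so $\gcd(2, N) = 1$, and combining this with the hypothesis $\gcd(k, N) = 1$ gives $\gcd(2k, N) = 1$. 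Theorem \ref{thm:bal} then delivers a $\Z_N$-distance antimagic labelling of $G$.

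For the necessity direction I would argue by contraposition using Corollary \ref{obs:zn-even}. The product $G$ is $2k$-regular, since the degree of a Cartesian product is the sum of the factor degrees, each equal to $2$, and it has $N$ vertices. Its degree $2k$ is even regardless of $k$. If some $n_i$ were even, then $N$ would be even, so both the order and the degree of $G$ would be even; Corollary \ref{obs:zn-even} then forbids $G$ from being $\Z_N$-distance antimagic, contradicting the hypothesis. Hence every $n_i$ is odd, which completes the biconditional.

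Finally, the displayed special case follows by specialising all $n_i$ to a common odd value $n$: the hypothesis $\gcd(k, n) = 1$ together with $n$ odd yields $\gcd(k, n^k) = 1$, so the ``if'' direction applies to $(C_n)_{\Box}^k$ with $N = n^k$.

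I do not expect any genuine obstacle here, since the labelling construction and its verification are entirely absorbed into Theorem \ref{thm:bal}, and the parity obstruction is exactly Corollary \ref{obs:zn-even}. The only point requiring a moment's care is the coprimality bookkeeping in the sufficiency direction: the degree is $2k$ rather than $k$, so one must explicitly use that odd factors force $N$ odd in order to discharge the stray factor of $2$ and conclude $\gcd(2k, N) = 1$.
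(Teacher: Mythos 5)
Your proposal is correct and follows essentially the same route as the paper: sufficiency via Theorem \ref{thm:bal} applied to the cycles $C_{n_i}$ (using Lemma \ref{obs:comp1}(a) for the balanced labellings and the observation that odd $n_i$ plus $\gcd(k, n_1\cdots n_k)=1$ gives $\gcd(2k, n_1\cdots n_k)=1$), and necessity via the parity obstruction of Corollary \ref{obs:zn-even} applied to the $2k$-regular product. No gaps; the coprimality bookkeeping you flag is exactly the step the paper also makes explicit.
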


\begin{proof}
Since $C_{n_1} \Box \cdots \Box C_{n_k}$ has degree $2k$, by Corollary \ref{obs:zn-even}, $C_{n_1} \Box \cdots \Box C_{n_k}$ cannot be $\Z_{n_1 \cdots n_k}$-distance antimagic unless $n_1, \ldots, n_k$ are all odd.

Suppose $n_1, \ldots, n_k \ge 3$ are all odd. Applying Theorem \ref{thm:bal} to $C_{n_1}, \ldots, C_{n_k}$, we have $r_i = 2 < n_j$, $r = 2k$, and each $C_{n_i}$ admits a balanced labelling by Lemma \ref{obs:comp1}. Since $n_1, \ldots, n_k$ are odd and $k$ is coprime to $n_1 \cdots n_k$ by our assumption, $r$ is coprime to $n_1 \cdots n_k$. Thus, by Theorem \ref{thm:bal}, $C_{n_1} \Box \cdots \Box C_{n_k}$ is $\Z_{n_1 \cdots n_k}$-distance antimagic.
\end{proof}

In particular, when $k$ is a prime, Corollary \ref{cor:bal} yields the following result.

\begin{cor}
\label{cor:bal1}
Let $p$ be a prime, and let $n_1, \ldots, n_p \ge 3$ be (not necessarily distinct) integers none of which has $p$ as a factor. Then $C_{n_1} \Box \cdots \Box C_{n_p}$ is $\Z_{n_1 \cdots n_p}$-distance antimagic if and only if all $n_1, \ldots, n_p$ are odd.

In particular, for any integers $n_1, n_2 \ge 3$, $C_{n_1} \Box C_{n_2}$ is $\Z_{n_1 n_2}$-distance antimagic if and only if both $n_1$ and $n_2$ are odd.

Moreover, if a prime $p$ is not a divisor of an odd integer $n \ge 3$, then $(C_n)_{\Box}^p$ is $\Z_{n^p}$-distance antimagic.
\end{cor}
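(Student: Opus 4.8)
The plan is to obtain all three assertions by specializing Corollary~\ref{cor:bal} to $k = p$, with one supplementary appeal to Corollary~\ref{obs:zn-even}. The key arithmetic point is that for a prime $p$ the coprimality hypothesis of Corollary~\ref{cor:bal}---that $p$ be coprime to $n_1 \cdots n_p$---is equivalent to $p \nmid n_1 \cdots n_p$, and since $p$ is prime this holds precisely when $p$ divides none of $n_1, \ldots, n_p$. Hence the hypothesis ``none of which has $p$ as a factor'' is exactly the instance $k = p$ of the hypothesis in Corollary~\ref{cor:bal}.

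For the first assertion I would simply set $k = p$ in Corollary~\ref{cor:bal}: the coprimality condition is then satisfied, and that corollary delivers verbatim the equivalence that $C_{n_1} \Box \cdots \Box C_{n_p}$ is $\Z_{n_1 \cdots n_p}$-distance antimagic if and only if all of $n_1, \ldots, n_p$ are odd. No further work is required.

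For the second assertion I take $p = 2$, but now the equivalence is claimed for \emph{all} $n_1, n_2 \ge 3$, not merely for pairs with both $n_i$ odd, so I split into cases. If both $n_1$ and $n_2$ are odd, then $2 \nmid n_1 n_2$ and the first assertion (with $p = 2$) applies, showing $C_{n_1} \Box C_{n_2}$ is $\Z_{n_1 n_2}$-distance antimagic. If instead at least one of $n_1, n_2$ is even, then $n_1 n_2$ is even while $C_{n_1} \Box C_{n_2}$ is $4$-regular, hence of even degree; Corollary~\ref{obs:zn-even} then forbids a $\Z_{n_1 n_2}$-distance antimagic labelling. Together the two cases give the equivalence for every pair $n_1, n_2 \ge 3$.

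For the third assertion I specialize the first to $n_1 = \cdots = n_p = n$: as $n$ is odd and $p \nmid n$, both the coprimality hypothesis and the ``all odd'' condition hold, so $(C_n)_{\Box}^p$ is $\Z_{n^p}$-distance antimagic. Since each step is a direct specialization, I expect no substantive obstacle; the only point needing care is the second assertion, where the equivalence for arbitrary $n_1, n_2$ forces the separate invocation of Corollary~\ref{obs:zn-even} to dispose of the case when $n_1 n_2$ is even, which the hypothesis of the first assertion had excluded.
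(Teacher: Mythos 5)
Your proposal is correct and matches the paper's own (unwritten) argument, which simply observes that the corollary is the case $k=p$ of Corollary~\ref{cor:bal}, using that for a prime $p$ coprimality to $n_1\cdots n_p$ is the same as $p$ dividing no $n_i$. Your extra care on the second assertion --- invoking Corollary~\ref{obs:zn-even} to rule out the case where some $n_i$ is even, which falls outside the hypothesis of Corollary~\ref{cor:bal} --- is exactly the right (and needed) supplement, and is consistent with how the paper proves the ``only if'' direction of Corollary~\ref{cor:bal} itself.
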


Applying Theorem \ref{thm:bal} to $d$ copies of $K_q$ and using Lemma \ref{obs:comp1}, we obtain that $H(d, q)$ ($\cong (K_q)_{\Box}^d$) is $\Z_{q^d}$-distance antimagic for any $d \ge 1$ and odd $q \ge 3$ that are coprime. It is interesting to note that this is also given in part (a) of Theorem \ref{ham} where $q$ is not required to be odd.

\begin{thm}
\label{thm:dir}
Let $G_{i}$ be an $r_{i}$-regular graph with $n_i$ vertices, $i=1, \ldots, k$. Suppose $n_i$ and $r/r_i$ are coprime for $i=1, \ldots, k$, where $r = r_1 \cdots r_k$.
\begin{itemize}
\item[\rm (a)] If each $G_i$ is $\Z_{n_i}$-distance antimagic, then $G_1 \times \cdots \times G_k$ is $\Z_{n_1 \ldots n_k}$-distance antimagic.
\item[\rm (b)] If each $G_i$ is $\Z_{n_i}$-distance magic, then $G_1 \times \cdots \times G_k$ is $\Z_{n_1 \ldots n_k}$-distance magic.
\end{itemize}
\end{thm}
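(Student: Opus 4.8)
The plan is to derive both parts from the direct-product machinery already established in Theorem \ref{gr_pr} and then to re-read its conclusion inside a cyclic group by means of the Chinese Remainder Theorem. Concretely, for each $i$ fix a $\Z_{n_i}$-distance antimagic (resp. magic) labelling $g_i\colon V(G_i)\to\Z_{n_i}$, let $\phi\colon \Z_{n_1}\times\cdots\times\Z_{n_k}\to\Z_{n_1\cdots n_k}$ be the isomorphism supplied by the Chinese Remainder Theorem, and define $f\colon V(G_1\times\cdots\times G_k)\to\Z_{n_1\cdots n_k}$ by $f(x)=\phi(g_1(x_1),\ldots,g_k(x_k))$. Since $\phi$ and each $g_i$ are bijections, $f$ is a bijection.

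Next I would transport the weight computation from the proof of Theorem \ref{gr_pr} through $\phi$. That computation is purely additive and does not invoke antimagicity until its last line, so it applies verbatim in the product group and, because $\phi$ is a homomorphism, gives
\[
w(x)=\phi\bigl((r/r_1)\,w_{G_1}(x_1),\ldots,(r/r_k)\,w_{G_k}(x_k)\bigr),
\]
where $w_{G_i}(x_i)$ is the weight of $x_i$ under $g_i$ in $\Z_{n_i}$. From here the two parts split. For (b), each $g_i$ is magic, so $w_{G_i}(x_i)$ equals a constant $\mu_i$ independent of $x_i$; hence every coordinate of the argument of $\phi$ is constant and $w(x)=\phi((r/r_1)\mu_1,\ldots,(r/r_k)\mu_k)$ for all $x$, which is the distance magic property. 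For (a), the hypothesis $\gcd(n_i,r/r_i)=1$ makes multiplication by $r/r_i$ a bijection of $\Z_{n_i}$, so $(a_1,\ldots,a_k)\mapsto((r/r_1)a_1,\ldots,(r/r_k)a_k)$ is a bijection of $\Z_{n_1}\times\cdots\times\Z_{n_k}$; composing this with the antimagic labellings $g_i$ and with the bijection $\phi$ shows that distinct vertices receive distinct weights.

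The step deserving the most care, and which I expect to be the main obstacle, is the passage from the external direct product group $\Z_{n_1}\times\cdots\times\Z_{n_k}$ (the natural home of the labelling coming from Theorem \ref{gr_pr}) to the \emph{cyclic} group $\Z_{n_1\cdots n_k}$ named in the statement. This identification is exactly the isomorphism $\phi$, and it is available precisely when $n_1,\ldots,n_k$ are pairwise coprime; without pairwise coprimality $\Z_{n_1}\times\cdots\times\Z_{n_k}$ is not cyclic, and a separable mixed-radix labelling fails because of carries when the weights are reduced modulo $n_1\cdots n_k$. I would therefore either add pairwise coprimality of the $n_i$ to the hypotheses or verify that it holds in the intended applications (e.g.\ when the $n_i$ are powers of distinct primes). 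Once the cyclic reformulation is justified, everything reduces to the additive bookkeeping already carried out for Theorem \ref{gr_pr}, so no genuinely new computation is required.
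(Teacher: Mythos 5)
Your reduction to Theorem \ref{gr_pr} via the Chinese Remainder Theorem proves only a special case of the statement, and the restriction you flag at the end is fatal rather than cosmetic: the theorem does not assume the $n_i$ are pairwise coprime, and the paper's own applications require that they not be. Corollary \ref{cor:dir} applies Theorem \ref{thm:dir} to $C_{n_1}\times\cdots\times C_{n_k}$ for arbitrary odd $n_i\ge 3$, ``not necessarily distinct,'' and in particular to $(C_n)^d_{\times}$ with all factors equal, where $\Z_{n}\times\cdots\times\Z_{n}$ is certainly not isomorphic to $\Z_{n^d}$. The hypothesis $\gcd(n_i,r/r_i)=1$ constrains each $n_i$ against the degrees, not against the other $n_j$, so adding pairwise coprimality to the hypotheses would gut the result: the whole point of Section \ref{sec:Zn} is to obtain labellings by the \emph{cyclic} group in situations where the product group is not cyclic.

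The idea you dismissed --- the ``separable mixed-radix labelling'' --- is exactly what the paper uses. The labelling is $f(x)=\sum_{i=1}^k (g_i(x_i)\bmod n_i)\,n_1\cdots n_{i-1}$, as in (\ref{eq:bal}); this is a set bijection onto $\Z_{n_1\cdots n_k}$ requiring no coprimality among the $n_i$. Your worry about carries is legitimate but not fatal, because carries only propagate upward and the coordinates can be recovered successively. The weight of $x$ is $\sum_{i=1}^k \frac{r}{r_i}\bigl(\sum_{y_i\in N_{G_i}(x_i)}g_i(y_i)\bigr)n_1\cdots n_{i-1}$ (each neighbour $y_i$ of $x_i$ occurs $r/r_i$ times among the neighbours of $x$), so reducing modulo $n_1$ annihilates every term with $i\ge 2$ and yields $\frac{r}{r_1}w_{G_1}(x_1)\bmod n_1$, which determines $x_1$ since $\gcd(r/r_1,n_1)=1$ and $g_1$ is antimagic; with $x_1$ known, reducing modulo $n_1n_2$ determines $\frac{r}{r_2}w_{G_2}(x_2)\bmod n_2$ and hence $x_2$; and so on. This successive-peeling argument is what the paper invokes with ``similar to the proof after (\ref{eq:bal})'', and it is the step your proposal is missing. (Your argument for part (b) has the same defect in that it still routes the conclusion through the isomorphism $\phi$; it too should be run through the mixed-radix $f$, which makes the weight a constant in $\Z_{n_1\cdots n_k}$ directly.)
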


\begin{proof}
Denote $G = G_1 \times \cdots \times G_k$ and $n = n_1 \ldots n_k$.

(a) Define $f$ as in (\ref{eq:bal}) with the understanding that each $g_i$ is a $\Z_{n_i}$-distance antimagic labelling of $G_i$. As shown in the proof of Theorem \ref{thm:bal}, $f$ is a bijection from $V(G)$ to $\Z_{n}$. The weight of $x$ under $f$ is given by
\bean
w(x) & \equiv & \sum_{(y_1, \ldots, y_k) \in N_{G}(x)} f(y_1, \ldots, y_k) \\
& \equiv & \sum_{(y_1, \ldots, y_k) \in N_{G}(x)}\, \left\{\sum_{i=1}^k\, (g_i(y_i)\,\mbox{mod}\,{n_i})\ n_1 \cdots n_{i-1}\right\} \\
& \equiv & \sum_{i=1}^k\, \left\{\sum_{(y_1, \ldots, y_k) \in N_{G}(x)}\, (g_i(y_i)\,\mbox{mod}\,{n_i})\ n_1 \cdots n_{i-1}\right\} \\
& \equiv & \sum_{i=1}^k\, \left\{\left(\frac{r}{r_i} w_{G_i}(x_i)\,\mbox{mod}\,{n_i}\right)\ n_1 \cdots n_{i-1}\right\},
\eean
where the congruence is modulo $n$. Similar to the proof after (\ref{eq:bal}), one can show that for $x = (x_1, \ldots, x_k)$, $x' = (x'_1, \ldots, x'_k) \in V(G)$, $w(x)  \equiv w(x')\,\pmod {n_1 \cdots n_k}$ if and only if $\frac{r}{r_i} w_{G_i}(x_i)\,\mbox{mod}\,{n_i} \equiv \frac{r}{r_i} w_{G_i}(x'_i)\,\mbox{mod}\,{n_i}$ for each $i$. Since $\gcd(r/r_i, n_i) = 1$, the latter holds if and only if $w_{G_i}(x_i)\,\mbox{mod}\,{n_i}$ $\equiv$ $w_{G_i}(x'_i)\,\mbox{mod}\,{n_i}$ for each $i$. However, since $g_i$ is a $\Z_{n_i}$-distance antimagic labelling, we have $w_{G_i}(x_i)\,\mbox{mod}\,{n_i} \not \equiv w_{G_i}(x'_i)\,\mbox{mod}\,{n_i}$ for $x_i \ne x'_i$. Therefore, $w(x)  \not \equiv w(x')\,\pmod {n_1 \cdots n_k}$ for distinct vertices $x, x'$ of $G$, and hence $f$ is a $\Z_n$-distance antimagic labelling of $G$.

(b) Define $f$ as above with the understanding that each $g_i$ is a $\Z_{n_i}$-distance magic labelling of $G_i$. Denote by $\mu_i$ the magic constant of $G_i$ with respect to $g_i$. The computation above yields
$$
w(x) \equiv \sum_{i=1}^k\, \left\{\left(\frac{r\mu_i}{r_i}\,\mbox{mod}\,{n_i}\right)\ n_1 \cdots n_{i-1}\right\}.
$$
Since this is independent of $x$, $f$ is a $\Z_n$-distance magic labelling of $G$.
\end{proof}

Combining part (a) of Theorem \ref{thm:dir}, Corollary \ref{obs:zn-even} and part (a) of Theorem \ref{cor:prod-cycle}, we obtain:

\begin{cor}
\label{cor:dir}
Let $n_1, \ldots, n_k \ge 3$ be (not necessarily distinct) integers. Then $C_{n_1} \times \cdots \times C_{n_k}$ is $\Z_{n_1 \cdots n_k}$-distance antimagic if and only if all $n_1, \ldots, n_k$ are odd.

In particular, for any integer $d \ge 1$ and odd integer $n \ge 3$, $(C_n)_{\times}^d$ is $\Z_{n^d}$-distance antimagic.
\end{cor}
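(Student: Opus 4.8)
The plan is to derive the statement directly from the three cited results by verifying that their hypotheses apply to a direct product of cycles. First I would record the two structural facts that drive the whole argument: each factor $C_{n_i}$ is $2$-regular on $n_i$ vertices, so the direct product $G = C_{n_1} \times \cdots \times C_{n_k}$ has $n = n_1 \cdots n_k$ vertices and is $r$-regular with $r = r_1 \cdots r_k = 2^k$. In particular the degree of $G$ is always even.

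For the \emph{necessity} (only-if) direction I would argue by contrapositive. Suppose some $n_j$ is even; then $n = n_1 \cdots n_k$ is even, and since $r = 2^k$ is also even, Corollary \ref{obs:zn-even} applies and shows that $G$ is not $\Z_n$-distance antimagic. Hence if $G$ is $\Z_n$-distance antimagic, every $n_i$ must be odd.

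For the \emph{sufficiency} (if) direction I would assume all $n_i$ are odd and invoke part (a) of Theorem \ref{thm:dir} with $G_i = C_{n_i}$ and $r_i = 2$. Here $r/r_i = 2^{k-1}$, and since each $n_i$ is odd we have $\gcd(n_i, 2^{k-1}) = 1$, so the coprimality hypothesis of Theorem \ref{thm:dir} is satisfied. Moreover, by part (a) of Theorem \ref{cor:prod-cycle} each $C_{n_i}$ is $\Z_{n_i}$-distance antimagic because $n_i$ is odd and $n_i \ge 3$. Theorem \ref{thm:dir}(a) then yields that $G$ is $\Z_{n_1 \cdots n_k}$-distance antimagic. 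The final ``in particular'' claim follows immediately by taking all $n_i = n$, for which $\gcd(n, 2^{d-1}) = 1$ whenever $n$ is odd.

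Since the corollary is essentially the assembly of three earlier results, there is no deep obstacle; the only points requiring care are the bookkeeping that the direct product of $2$-regular cycles has degree exactly $2^k$ (so that Corollary \ref{obs:zn-even} applies with even $r$), and the observation that the oddness of each $n_i$ is precisely what simultaneously guarantees the $\gcd(n_i, 2^{k-1}) = 1$ hypothesis of Theorem \ref{thm:dir} and the $\Z_{n_i}$-distance antimagic property of $C_{n_i}$ supplied by Theorem \ref{cor:prod-cycle}(a).
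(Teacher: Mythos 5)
Your proposal is correct and follows exactly the route the paper intends: the paper gives no separate proof but simply states that the corollary is obtained by combining Corollary \ref{obs:zn-even} (for necessity, since the direct product is $2^k$-regular of even degree), Theorem \ref{cor:prod-cycle}(a) (each odd cycle is $\Z_{n_i}$-distance antimagic), and Theorem \ref{thm:dir}(a) (with $\gcd(n_i, 2^{k-1})=1$ for odd $n_i$). Your write-up just makes these verifications explicit, and they all check out.
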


The \emph{strong product} $G_1 \boxtimes \cdots \boxtimes G_d$ of $d$ graphs $G_1, \ldots, G_d$ is defined \cite{IK} to have vertex set $V(G_1) \times \cdots \times V(G_d)$, such that two vertices $(x_1, \ldots, x_d)$ and $(y_1, \cdots, y_d)$ are adjacent if and only if, for $i = 1, \ldots, d$, either $x_i$ is adjacent to $y_i$ in $G_i$ or $x_i = y_i$.

\begin{thm}
\label{thm:str}
Let $G_i$ be an $r_{i}$-regular graph with $n_i$ vertices, for $i=1, 2$.
\begin{itemize}
\item[\rm (a)] Suppose both $G_1$ and $G_2$ have balanced labellings and $r_1 r_2 + r_1 + r_2$ is coprime to $n_1 n_2$. Then $G_1 \boxtimes G_2$ is $\Z_{n_1 n_2}$-distance antimagic.
\item[\rm (b)] Suppose $G_1$ is $\Z_{n_1}$-distance magic, $G_2$ is $\Z_{n_2}$-distance magic, $r_1$ is coprime to $n_2$, and $r_2$ is coprime to $n_1$. Then $G_1 \boxtimes G_2$ is $\Z_{n_1 n_2}$-distance antimagic.
\end{itemize}
\end{thm}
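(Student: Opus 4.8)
The plan is to reuse the mixed-radix labelling from Theorem~\ref{thm:bal}: writing $g_1, g_2$ for the two given labellings and $\bar{g}_i(x_i) \in \{0, \ldots, n_i - 1\}$ for the canonical representatives, I set $f(x_1, x_2) = \bar{g}_1(x_1) + \bar{g}_2(x_2)\, n_1$, which is already known to be a bijection onto $\Z_{n_1 n_2}$. The first step is to read off the neighbourhood of a vertex $x = (x_1, x_2)$ in $G_1 \boxtimes G_2$: it splits into the $r_1$ vertices $(y_1, x_2)$ with $y_1 \in N_{G_1}(x_1)$, the $r_2$ vertices $(x_1, y_2)$ with $y_2 \in N_{G_2}(x_2)$, and the $r_1 r_2$ vertices $(y_1, y_2)$ with $y_1 \in N_{G_1}(x_1)$ and $y_2 \in N_{G_2}(x_2)$, for a total degree $R = r_1 r_2 + r_1 + r_2$. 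Summing $f$ over these neighbours and separating the two radix digits gives $w(x) = S_1 + S_2\, n_1$ as an integer, where $S_1 = (1 + r_2)\hat{W}_1 + r_2 \bar{g}_1(x_1)$ and $S_2 = (1 + r_1)\hat{W}_2 + r_1 \bar{g}_2(x_2)$, with $\hat{W}_i = \sum_{y_i \in N_{G_i}(x_i)} \bar{g}_i(y_i)$ the integer sum of the $i$-th digits over the $G_i$-neighbours.

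The key point is that I will \emph{not} try to prove a clean identity $w(x) \equiv R f(x) \pmod{n_1 n_2}$; this fails because $\hat{W}_1$ overflows $n_1$ and produces an $x$-dependent carry into the $n_1$-place. Instead I argue in two cases. When $x_1 \ne x_1'$, I reduce modulo $n_1$, which annihilates the entire $S_2 n_1$ term (carry included), so $w(x) \equiv S_1 \pmod{n_1}$. In case (a) the balanced hypothesis $\hat{W}_1 \equiv r_1 \bar{g}_1(x_1) \pmod{n_1}$ collapses this to $w(x) \equiv R\,\bar{g}_1(x_1) \pmod{n_1}$, and $\gcd(R, n_1) = 1$ together with the bijectivity of $g_1$ makes $x_1 \mapsto w(x) \bmod n_1$ injective; in case (b) the magic hypothesis makes $\hat{W}_1 \equiv \mu_1$ constant, leaving $w(x) \equiv (1+r_2)\mu_1 + r_2 \bar{g}_1(x_1) \pmod{n_1}$, which is injective in $x_1$ because $\gcd(r_2, n_1) = 1$. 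Either way distinct first coordinates force distinct weights.

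The remaining case is $x_1 = x_1'$ but $x_2 \ne x_2'$. Here $S_1$ depends only on $x_1$, so it cancels and $w(x) - w(x') \equiv (S_2 - S_2')\, n_1 \pmod{n_1 n_2}$; this vanishes exactly when $S_2 \equiv S_2' \pmod{n_2}$. Reducing $S_2 - S_2'$ modulo $n_2$ and applying the balanced relation $\hat{W}_2 \equiv r_2 \bar{g}_2(x_2) \pmod{n_2}$ in case (a), or the magic relation $\hat{W}_2 \equiv \mu_2 \pmod{n_2}$ in case (b), leaves $R\,(\bar{g}_2(x_2) - \bar{g}_2(x_2'))$ and $r_1\,(\bar{g}_2(x_2) - \bar{g}_2(x_2'))$ respectively; since $\gcd(R, n_2) = 1$ (resp. $\gcd(r_1, n_2) = 1$) and $g_2$ is injective, this is nonzero modulo $n_2$, whence $w(x) \ne w(x')$. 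Combining the two cases shows $f$ is a $\Z_{n_1 n_2}$-distance antimagic labelling.

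I expect the main obstacle to be precisely the carry issue flagged above: the tempting guess that $f$ is a balanced labelling of $G_1 \boxtimes G_2$ is false, so one must avoid proving a global congruence and instead exploit the positional structure, namely that carries are multiples of $n_1$ and hence invisible modulo $n_1$ and identical (so cancelling) once the first coordinate is fixed. This is also what lets case (a) proceed without the auxiliary hypothesis $r_i \le n_j$ that Theorem~\ref{thm:bal} requires.
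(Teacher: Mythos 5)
Your proof is correct, and it uses the same labelling $f(x_1,x_2)=\bar g_1(x_1)+\bar g_2(x_2)n_1$ and the same three-part decomposition of the neighbourhood in $G_1\boxtimes G_2$ as the paper. The difference lies in the final step, and it is a difference worth recording: the paper argues through a single chain of congruences modulo $n_1n_2$, in which sums such as $\sum_{y_1\in N_{G_1}(x_1)}(g_1(y_1)\bmod n_1)$ are replaced by $w_{G_1}(x_1)\bmod n_1$ \emph{modulo $n_1n_2$}. Read literally, that replacement discards an $x_1$-dependent multiple of $n_1$ (the carry you flag), so the paper's displayed identity $w(x)\equiv\{[(r_1r_2+r_1+r_2)g_1(x_1)]\bmod n_1\}+\{[(r_1r_2+r_1+r_2)g_2(x_2)]\bmod n_2\}n_1$ is not in general a true identity in $\Z_{n_1n_2}$ (already for $C_3\boxtimes C_3$ with the standard balanced labellings one finds a vertex where the two sides disagree), even though the injectivity conclusion it is meant to support is true. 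Your two-case argument --- reducing modulo $n_1$ when the first coordinates differ, so the carry is invisible, and observing that the carry cancels exactly when the first coordinates agree, so only the $n_2$-residue of $S_2$ matters --- supplies the justification the paper's write-up elides, and it is the correct way to finish. Your closing remark is also accurate: because the argument never needs a global congruence $w(x)\equiv Rf(x)$, no hypothesis of the form $r_i\le n_j$ (as in Theorem~\ref{thm:bal}) is required here, consistent with the statement of Theorem~\ref{thm:str}. Case (b) is handled correctly in the same way, with $\hat W_i\equiv\mu_i\pmod{n_i}$ and the coprimality of $r_2$ with $n_1$ and of $r_1$ with $n_2$ doing the work.
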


\begin{proof}
Denote $G = G_1 \boxtimes G_2$ and $n = n_1 n_2$.

(a) Let $g_i: V(G_i) \rightarrow \Z_{n_i}$ be a balanced labelling of $G_i$, $i=1,2$. Define $f(x) = (g_1(x_1)\,\mbox{mod}\,{n_1}) + (g_2(x_2)\,\mbox{mod}\,{n_2}) n_1$ for $x=(x_1, x_2) \in V(G_1) \times V(G_2)$. 
Then $f$ is a bijection from $V(G)$ to $\Z_{n}$ as seen in the paragraph after (\ref{eq:bal}).
With congruence modulo $n$, we have
\bean
w(x) & \equiv & \sum_{(y_1, y_2) \in N_{G}(x)} f(y_1, y_2) \\
& \equiv & \sum_{y_1 \in N_{G_1}(x_1)} f(y_1, x_2) + \sum_{y_2 \in N_{G_2}(x_2)} f(x_1, y_2)  + \sum_{y_1 \in N_{G_1}(x_1),\, y_2 \in N_{G_2}(x_2)} f(y_1, y_2) \\
& \equiv & \sum_{y_1 \in N_{G_1}(x_1)} \left\{(g_1(y_1)\,\mbox{mod}\,{n_1}) + (g_2(x_2)\,\mbox{mod}\,{n_2})n_1\right\} \\
& & + \sum_{y_2 \in N_{G_2}(x_2)} \left\{(g_1(x_1)\,\mbox{mod}\,{n_1}) + (g_2(y_2)\,\mbox{mod}\,{n_2})n_1\right\} \\
& & + \sum_{y_1 \in N_{G_1}(x_1),\, y_2 \in N_{G_2}(x_2)} \left\{(g_1(y_1)\,\mbox{mod}\,{n_1}) + (g_2(y_2)\,\mbox{mod}\,{n_2})n_1\right\} \\
& \equiv & (w_{G_1}(x_1)\,\mbox{mod}\,{n_1}) + (r_1 g_2(x_2)\,\mbox{mod}\,{n_2})n_1 + (r_2 g_1(x_1)\,\mbox{mod}\,{n_1}) + (w_{G_2}(x_2)\,\mbox{mod}\,{n_2})n_1 \\
& & + (r_2 w_{G_1}(x_1)\,\mbox{mod}\,{n_1}) + (r_1 w_{G_2}(x_2)\,\mbox{mod}\,{n_2})n_1 \\
& \equiv & \{[(r_2 + 1) w_{G_1}(x_1) + r_2 g_1(x_1)]\,\mbox{mod}\,{n_1}\} + \{[(r_1 + 1) w_{G_2}(x_2) + r_1 g_2(x_2)]\,\mbox{mod}\,{n_2}\}n_1 \\
& \equiv & \{[(r_1 r_2 + r_1 + r_2) g_1(x_1)]\,\mbox{mod}\,{n_1}\} + \{[(r_1 r_2 + r_1 + r_2) g_2(x_2)]\,\mbox{mod}\,{n_2}\}n_1.
\eean
In the last step we used the assumption that $g_1, g_2$ are balanced labellings of $G_1, G_2$ respectively. Since $r_1 r_2 + r_1 + r_2$ is coprime to $n_1n_2$, the computation above implies $w(x) \ne w(x')$ for distinct $x, x' \in V(G)$. Thus $f$ is a $\Z_n$-distance antimagic labelling of $G$.

(b) Define $f: V(G) \rightarrow \Z_{n}$ as in (a) with the understanding that $g_i$ is a $\Z_{n_i}$-distance magic labelling of $G_i$, $i=1,2$. Let $\mu_1, \mu_2$ be the corresponding magic constants of $g_1, g_2$ respectively. The computation in (a) implies that, for $x = (x_1, x_2) \in V(G)$, $w(x) = \{[(r_2 + 1) \mu_1 + r_2 g_1(x_1)]\,\mbox{mod}\,{n_1}\} + \{[(r_1 + 1) \mu_2 + r_1 g_2(x_2)]\,\mbox{mod}\,{n_2}\}n_1$. Since both $g_1$ and $g_2$ are bijective, and since $r_1$ is coprime to $n_2$ and $r_2$ is coprime to $n_1$, one can show that $w(x) \ne w(x')$ for distinct $x, x' \in V(G)$. Therefore, $f$ is a $\Z_n$-distance antimagic labelling of $G$.
\end{proof}

Theorem \ref{thm:str} involves only two factor graphs. Nevertheless, with the help of the following lemma (together with the associativity \cite{IK} of the strong product), we can obtain results about strong products with more than two factors by recursively applying part (a) of Theorem \ref{thm:str}.

\begin{lemma}
\label{lem:bl}
Let $G_1$ and $G_2$ be regular graphs. If both $G_1$ and $G_2$ have balanced labellings, then $G_1 \boxtimes G_2$ has a balanced labelling.
\end{lemma}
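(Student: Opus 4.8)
The plan is to re-use the bijection and the weight computation from the proof of Theorem \ref{thm:str}(a), observing that when \emph{both} factor labellings are balanced the expression obtained there should collapse to precisely the balanced condition for the product. Let $g_i\colon V(G_i)\to\Z_{n_i}$ be a balanced labelling of the $r_i$-regular graph $G_i$, $i=1,2$. The strong product $H=G_1\boxtimes G_2$ has $n_1 n_2$ vertices, and since the closed neighbourhood of $(x_1,x_2)$ in $H$ is $N_{G_1}[x_1]\times N_{G_2}[x_2]$, it is regular of degree $r=(r_1+1)(r_2+1)-1=r_1 r_2+r_1+r_2$. Hence I must produce a bijection $f\colon V(H)\to\Z_{n_1 n_2}$ with $w(x)\equiv r\,f(x)\pmod{n_1 n_2}$ for every vertex $x$.

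First I would take the mixed-radix labelling $f(x_1,x_2)=(g_1(x_1)\bmod n_1)+(g_2(x_2)\bmod n_2)\,n_1$ used in Theorem \ref{thm:str}(a), which is a bijection onto $\Z_{n_1 n_2}$ for the reasons given after (\ref{eq:bal}). Splitting the neighbours of $(x_1,x_2)$ into the three families considered there (those changing only the first coordinate, only the second, and both), the weight factors coordinatewise exactly as in that computation; the only change is that I feed in the balanced identity $w_{G_i}(x_i)\equiv r_i g_i(x_i)\pmod{n_i}$ for \emph{each} $i$ rather than a magic or antimagic input. After simplification the coefficient $(r_1+1)(r_2+1)-1=r$ appears on both blocks, and the task reduces to verifying that the resulting weight is congruent to $r\,f(x)$ modulo $n_1 n_2$.

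The transparent route, which I would write up first, is the case $\gcd(n_1,n_2)=1$: here $\Z_{n_1 n_2}\cong\Z_{n_1}\times\Z_{n_2}$, so it suffices to check $w(x)\equiv r\,f(x)$ separately modulo $n_1$ and modulo $n_2$ (taking $f$ to be the Chinese-remainder combination with $f\equiv g_i(x_i)\pmod{n_i}$ if convenient), and each of these follows at once from $g_1$ and $g_2$ being balanced. The main obstacle is the general modulus: when $n_1$ and $n_2$ share a common factor the two blocks no longer decouple, and the congruence $w_{G_1}(x_1)\equiv r_1 g_1(x_1)\pmod{n_1}$, which pins down the first coordinate only modulo $n_1$, can push a nonzero ``carry'' into the $n_1$-scaled second block. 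Concretely the discrepancy one must kill is a multiple of $n_1$ determined by $(w_{G_1}(x_1)-r_1 g_1(x_1))/n_1$, and showing it vanishes modulo $n_1 n_2$ is exactly the delicate bookkeeping I expect to be the crux; I would attack it by tracking the mixed-radix reductions explicitly as in the proof of Theorem \ref{thm:str}.
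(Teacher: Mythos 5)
You have correctly reconstructed the labelling the paper uses --- the mixed-radix bijection $f(x_1,x_2)=(g_1(x_1)\bmod n_1)+(g_2(x_2)\bmod n_2)\,n_1$ --- and your treatment of the coprime case via the Chinese Remainder Theorem is complete and correct. But the proposal stops exactly where the proof has to be finished: you acknowledge that for general $n_1,n_2$ a carry of size $(r_2+1)\,c_1(x_1)\,n_1$ survives, where $c_1(x_1)=\bigl(\sum_{y_1\in N_{G_1}(x_1)}g_1(y_1)-r_1g_1(x_1)\bigr)/n_1$ is computed in the integers, and you defer killing it to later bookkeeping. That term genuinely does not die. Take $G_1=C_3$ and $G_2=C_5$ with the sequential balanced labellings $g_i(j)=j$; then $f(i,j)=i+3j$, and the weight of $(0,0)$ in $C_3\boxtimes C_5$ is $0+1+2+3+4+5+12+13+14-0=54\equiv 9\pmod{15}$, whereas $r\,f(0,0)=8\cdot 0=0$. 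So the mixed-radix labelling is simply not balanced here, and no amount of careful reduction will change that: the hypothesis $w_{G_1}(x_1)\equiv r_1g_1(x_1)\pmod{n_1}$ controls the low-order block but is too weak to control the contribution it scatters into the $n_1$-scaled block modulo $n_2$.

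You should also know that the paper's own proof consists of the same definition of $g$ followed by ``one can verify that $g$ is a balanced labelling,'' so the obstruction you isolated is not an artifact of your write-up; it is a gap in the paper itself. (In the example above the opposite ordering, $f(i,j)=j+5i$, happens to work because $r_{C_3}+1=3=n_{C_3}$ annihilates the carry; but for $C_5\boxtimes C_7$ neither ordering works for any choice of balanced labellings of the factors, since a balanced labelling of $C_5$, or of $C_7$, would need all integer carries equal to $0$, which forces the labels around the cycle to form an arithmetic progression and hence to be constant.) The statement does hold via your CRT labelling whenever $\gcd(n_1,n_2)=1$ --- which in particular rescues $C_5\boxtimes C_7$ --- and it holds via the paper's labelling when the high-order factor $G_2$ is complete, since then $n_2=r_2+1$ divides the carry coefficient. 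In general, though, the lemma needs either an additional hypothesis or a genuinely different construction, and neither your proposal nor the paper supplies one.
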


\begin{proof}
Denote $n_i = |V(G_i)|$, and let $g_i: V(G_i) \rightarrow \Z_{n_i}$ be a balanced labelling of $G_i$,  $i=1,2$. Define $g: V(G_1) \times V(G_2) \rightarrow \Z_{n_1 n_2}$ by $g(x_1, x_2) = g_1(x_1) + g_2(x_2) n_1$. Then $g$ is a bijection from $V(G_1) \times V(G_2)$ to $\Z_{n_1 n_2}$. One can verify that $g$ is a balanced labelling of $G_1 \boxtimes G_2$.
\end{proof}

Since any cycle $C_n$ admits a $\Z_n$-balanced labelling, in view of Corollary \ref{obs:zn-even}, we obtain the following result from part (a) of Theorem \ref{thm:str}.

\begin{cor}
\label{cor:str}
Let $m, n \ge 3$ be (not necessarily distinct) integers. Then $C_{m} \boxtimes C_{n}$ is $\Z_{mn}$-distance antimagic if and only if both $m$ and $n$ are odd.
\end{cor}

Similarly, since $K_n$ admits a $\Z_n$-balanced labelling if $n$ is odd (Lemma \ref{obs:comp1}), by part (a) of Theorem \ref{thm:str} we obtain the following corollary.

\begin{cor}
\label{cor:boxcomplete}
For any odd integers $m, n \geq 3$ (not necessarily distinct), $K_{m} \boxtimes K_{n}$ is $\Z_{mn}$-distance antimagic.
\end{cor}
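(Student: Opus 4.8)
The plan is to apply part (a) of Theorem \ref{thm:str} with $G_1 = K_m$ and $G_2 = K_n$. First I would record the relevant parameters: $K_m$ is $(m-1)$-regular on $m$ vertices and $K_n$ is $(n-1)$-regular on $n$ vertices, so in the notation of Theorem \ref{thm:str} we have $n_1 = m$, $n_2 = n$, $r_1 = m-1$ and $r_2 = n-1$.

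Next I would verify the two hypotheses of Theorem \ref{thm:str}(a). Since $m$ and $n$ are odd, Lemma \ref{obs:comp1}(b) guarantees that both $K_m$ and $K_n$ admit balanced labellings; this is exactly where the oddness assumption is used. It remains to check that $r_1 r_2 + r_1 + r_2$ is coprime to $n_1 n_2 = mn$. Here the key computation is
\[
r_1 r_2 + r_1 + r_2 = (m-1)(n-1) + (m-1) + (n-1) = mn - 1,
\]
so the quantity in question is simply $mn - 1$.

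Finally I would observe that $mn - 1$ and $mn$ are consecutive integers and hence coprime, so $\gcd(r_1 r_2 + r_1 + r_2,\, n_1 n_2) = 1$ holds automatically. With both hypotheses verified, Theorem \ref{thm:str}(a) immediately yields that $K_m \boxtimes K_n$ is $\Z_{mn}$-distance antimagic. There is essentially no obstacle here: the only substantive input is the existence of balanced labellings, which is precisely what forces $m$ and $n$ to be odd, while the coprimality condition collapses to the trivial fact that consecutive integers are coprime. The one point worth highlighting is that $r_1 r_2 + r_1 + r_2$ telescopes to $mn - 1$, which is what makes the numerical hypothesis of Theorem \ref{thm:str}(a) satisfied for free in the case of complete graphs.
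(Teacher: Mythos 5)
Your proposal is correct and follows exactly the paper's route: invoking Lemma \ref{obs:comp1}(b) for balanced labellings of $K_m$ and $K_n$ (this is where oddness is needed) and then applying Theorem \ref{thm:str}(a). The only difference is that you make explicit the computation $r_1 r_2 + r_1 + r_2 = mn-1$, which the paper leaves implicit; this is a welcome clarification but not a different argument.
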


We conclude this section with the following results. 

\begin{thm}
\label{Knregular}
Let $G$ be a regular graph on $m$ vertices other than the empty graph $\overline{K}_m$.
\begin{itemize}
  \item[\rm (a)] If $n$ is even, then $K_n\boxtimes G$ is $\Z_{nm}$-distance antimagic.
  \item[\rm (b)] If both $n$ and $m$ are odd, then $K_n\boxtimes G$ is $\Z_{nm}$-distance antimagic.
\end{itemize}
\end{thm}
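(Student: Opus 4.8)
The plan is to reduce the whole statement to a single combinatorial fact — the existence of an $m \times n$ array with equal row sums — and then to supply that array separately in the two parity regimes. Write $V(K_n) = \{0, 1, \ldots, n-1\}$ and $V(G) = \{x_0, \ldots, x_{m-1}\}$, let $G$ be $r$-regular with $r \ge 1$, and for a labelling $f \colon V(K_n) \times V(G) \to \Z_{nm}$ set $C(y) = \sum_{b \in V(K_n)} f(b, y)$, the sum of the labels sharing the $G$-coordinate $y$. First I would record the structure of the strong product: the neighbours of $(a, x_i)$ are the vertices $(b, x_i)$ with $b \ne a$, the vertices $(a, y)$ with $y \sim_G x_i$, and the vertices $(b, y)$ with $b \ne a$ and $y \sim_G x_i$.

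Next I would derive the weight formula. Summing the labels over these three families and using that $K_n$ is complete, a short calculation gives
\[
w(a, x_i) \;=\; \Big( C(x_i) - f(a, x_i) \Big) + \sum_{y \sim_G x_i} C(y) \;=\; D(x_i) - f(a, x_i),
\]
where $D(x) := \sum_{y \in N_G[x]} C(y)$ is the sum of $C(\cdot)$ over the closed neighbourhood of $x$. The decisive observation is that $D(x)$ involves $x$ only through the numbers $C(y)$. Hence the \emph{key reduction}: if $f$ can be chosen so that $C(y) = c$ is the same constant for every $y \in V(G)$, then, because $G$ is $r$-regular and $|N_G[x]| = r+1$ for all $x$, we get $D(x) = (r+1)c =: D_0$, a single constant. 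Then $w(a, x_i) = D_0 - f(a, x_i)$, and since $f$ is a bijection onto $\Z_{nm}$ and $t \mapsto D_0 - t$ is a bijection of $\Z_{nm}$, all weights are distinct. Thus it suffices to produce a bijection $f$ with all $C(y)$ equal. Displaying the labels as an $m \times n$ array (rows indexed by $V(G)$, columns by $V(K_n)$, with $f(a, x_i)$ in row $i$, column $a$), this is exactly an array containing each of $\{0, 1, \ldots, nm-1\}$ once whose $m$ row sums are all equal.

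I would then build such an array in each case. For part (a), with $n$ even, I would use complementary pairs: split $\{0, 1, \ldots, nm-1\}$ into the $nm/2$ pairs $\{k, nm-1-k\}$, each of sum $nm-1$, and put $n/2$ of them in each of the $m$ rows, so every row sums to $\tfrac{n}{2}(nm-1)$; this works for \emph{every} $m$. For part (b), with $n$ and $m$ both odd (hence both at least $3$, since $G \ne \overline{K}_m$ forces $m \ge 2$), internal pairing fails because $n$ is odd, and here I would invoke the known existence of a magic $(m,n)$-rectangle, which holds whenever $m \equiv n \pmod 2$ with $m, n \ge 3$. Such a rectangle has constant row sums, and subtracting $1$ from every entry converts its entries $1, \ldots, nm$ into $0, \ldots, nm-1$ while keeping the row sums equal. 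In either case the reduction above then yields a $\Z_{nm}$-distance antimagic labelling.

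The hard part is the odd case (b): with $n$ odd one cannot equalise the rows by internal pairing, so producing an array with equal row sums is genuinely non-trivial, and this is precisely where the external input — the existence theorem for magic rectangles, valid only under the matching-parity condition $m \equiv n \pmod 2$ — is indispensable. This also explains the parity hypotheses: in the excluded case $n$ odd and $m$ even the common row sum $\tfrac{n(nm+1)}{2}$ is not even an integer, so no constant-row-sum array exists and an entirely different method would be needed.
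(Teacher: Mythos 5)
Your proposal is correct and follows essentially the same route as the paper: both reduce the problem to making the label sums over the $K_n$-fibres constant (so that $w(x) = (r+1)c - f(x)$ is a bijective function of the label), achieve this in case (a) by pairing labels $k$ and $nm-1-k$ within each fibre, and in case (b) by importing the existence of a magic $(m,n)$-rectangle for $m \equiv n \pmod 2$. The only differences are cosmetic — you state the pairing abstractly where the paper writes an explicit formula, and you make the reduction step ($D(x)=(r+1)c$) more transparent than the paper does.
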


\begin{proof}
Let $r \ge 1$ be the degree of $G$. Then $K_n\boxtimes G$ is $(n(r+ 1)-1)$-regular with $mn$ vertices. Denote $V(K_n) = \{x_1,x_2,\ldots,x_n\}$ and $V(G) = \{v_1,v_2,\ldots,v_m\}$.

(a) Let $n$ be even. As in~\cite{Be}, define $f: V(K_n\boxtimes G)\rightarrow \{1,2,\ldots,mn\}$ by
$$
f(x_j,v_i)=\left\{
      \begin{array}{ll}
        \frac{i-1}{2}n +j-1, & \hbox{if } j \hbox{ is odd,} \\[0.2cm]
        mn-1-f(x_{j-1},v_i), &  \hbox{if } j \hbox{ is even}
      \end{array}
    \right.
$$
for $1 \leq i \leq m$ and $1 \leq j \leq n$. It can be verified that $f$ is a bijection and
$w(x)=(r+1)\frac{n}{2}-f(x)$ for each $x\in V(K_n\boxtimes G)$. Hence $f$ is a $\Z_{nm}$-distance antimagic labelling of $K_n\boxtimes G$.

(b) Suppose both $n$ and $m$ are odd. Then there exists \cite{Har1,Har2} a magic $(n, m)$-rectangle, say, with $(j,i)$-entry $a_{j,i}$, $1\leq j \leq n$, $1\leq i\leq m$ and $\sum_{j=1}^{n}a_{j,i}=C$ for some constant $C$ and every $i$. (A magic $(n,m)$-rectangle is an $n \times m$ array in which each of the integers $1, 2, \ldots, mn$ occurs exactly once such that the sum over each row is a constant and the sum over each column is also a constant.) 
Define $f \colon V(K_n \boxtimes G)\rightarrow \{1,2,\ldots,nm\}$ by $f(x_j,v_i)=a_{j,i}$ for each pair $(j, i)$. Obviously, $f$ is a bijection and moreover $\sum_{j=1}^{n}f(x_j,v_i)=C$ for every $i$. Therefore, for any $x\in V(K_n\boxtimes G)$, we have
$w(x)=(r+1)C-f(x)$. It follows that $f$ is a $\Z_{nm}$-distance antimagic labelling of $K_n\boxtimes G$.
\end{proof}

\section{$\Z_{n}$-distance antimagic circulants}
\label{sec:circulant}

A \emph{circulant graph} is a Cayley graph $\Cay(\Z_n, S)$ on a cyclic group $\Z_n$, where $S \subseteq \Z_n \setminus \{0\}$ such that $S = -S = \{-s: s \in S\}$ with operation modulo $n$. The degree $|S|$ of $\Cay(\Z_n, S)$ is odd if $n$ is even and $n/2 \in S$, and even otherwise. The purpose of this section is to show the following result on $\Z_{n}$-distance antimagic circulants.
 
\begin{thm}
\label{thm:circ1}
The circulant graph $\Cay(\Z_{n},S)$ is $\Z_{n}$-distance antimagic if one of the following holds:
\begin{enumerate}
\item[\rm (a)] $n$ is even, $n/2 \in S$, $S$ contains $2s$ even integers and $2t$ odd integers other than $n/2$, and $2(s-t)+1$ and $2(t-s)+1$ are both coprime to $n$;
\item[\rm (b)] $|S|$ and $n$ are coprime.
\end{enumerate}
 
On the other hand, if $|S|$ and $n$ are both even, then $\Cay(\Z_{n},S)$ is not $\Z_{n}$-distance antimagic. 
\end{thm}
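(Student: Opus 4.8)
The two outer statements come essentially for free from earlier results, so I would dispose of them first. For the negative statement, note that $\Cay(\Z_n,S)$ is $|S|$-regular on $n$ vertices; hence if $|S|$ and $n$ are both even, Corollary~\ref{obs:zn-even} immediately gives that it is not $\Z_n$-distance antimagic. For part (b), the hypothesis $\gcd(|S|,n)=1$ is precisely the hypothesis of part (a) of Theorem~\ref{obs:cay} applied with $A=\Z_n$ and $r=|S|$, so any automorphism of $\Z_n$ (for instance the identity) is a $\Z_n$-distance antimagic labelling. Thus all the real content lies in part (a).

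For part (a) the identity labelling will not suffice in general: its weight $w(x)=|S|x+\sigma(S)$ is linear in $x$ and collides whenever $\gcd(|S|,n)>1$, which is permitted under the hypotheses of (a). Instead I would use a \emph{parity-twisted} labelling, namely $f(x)=x$ when $x$ is even and $f(x)=n-x$ when $x$ is odd. Since $n$ is even, this map fixes the even residues and permutes the odd residues among themselves, so it is a bijection $\Z_n\to\Z_n$; moreover $f(y)\equiv y\pmod 2$ for every $y$, i.e.\ $f$ preserves parity. The plan is then to compute $w(x)=\sum_{s\in S}f(x+s)$ and to show these weights are pairwise distinct.

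The argument rests on two observations. First, because $f$ preserves parity and $|S|$ is odd, one obtains $w(x)\equiv x+c\pmod 2$ for a constant $c$ depending only on $S$; hence all even vertices receive weights of one parity and all odd vertices weights of the other, so the two classes of weights are automatically disjoint and it suffices to prove distinctness within each parity class. Second, fixing the parity of $x$, the neighbours $x+e$ with $e\in S$ even keep the parity of $x$ while the neighbours $x+o$ with $o\in S$ odd flip it; combining this with the definition of $f$ and with the fact that $S\setminus\{n/2\}$ is closed under negation (so its even part and its odd part each sum to $0$ in $\Z_n$), the weight collapses to an affine function of $x$ whose leading coefficient is, up to sign, either $2(s-t)+1$ or $2(t-s)+1$. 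Since both of these are assumed coprime to $n$, multiplication by either is a bijection of $\Z_n$, so $w$ restricted to each parity class is injective; together with the disjointness of the two classes this yields the desired $\Z_n$-distance antimagic labelling.

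The delicate point, and the step I expect to require the most care, is the contribution of the single self-paired neighbour $x+n/2$ arising from the involution $n/2\in S$: its image under $f$ depends on the parity of $x+n/2$, which in turn depends on whether $n\equiv 0$ or $n\equiv 2\pmod 4$. This is exactly why both coprimality conditions are imposed — for $n\equiv 0\pmod 4$ the relevant coefficient works out to $\pm(2(s-t)+1)$, while for $n\equiv 2\pmod 4$ it is $\pm(2(t-s)+1)$ — and assuming both coprime to $n$ covers the two cases uniformly. I would therefore organise the final write-up as a short case split on $n\bmod 4$ to fix the signs and additive offsets, the arithmetic in each case being routine once the parity bookkeeping above is in place.
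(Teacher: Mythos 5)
Your proposal is correct and follows essentially the same route as the paper: parts (b) and the final negative statement are dispatched via Theorem~\ref{obs:cay}(a) and Corollary~\ref{obs:zn-even} exactly as in the paper, and for part (a) the paper uses the parity-twisted labelling $f(k)=k$ for $k$ odd, $f(k)=-k$ for $k$ even (the negative of yours, hence equivalent), with the same parity bookkeeping, the same collapse to an affine map with leading coefficient $\pm(2(s-t)+1)$ or $\pm(2(t-s)+1)$, and the same case split on the parity of $n/2$.
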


\begin{proof}
Part (b) follows from Theorem \ref{obs:cay}. To prove (a), assume $n=2m$ is even and $m \in S$. Denote $\Z_n = \{0, 1, \ldots, 2m-1\}$ and let $S=\{\pm i_1,\pm i_2,\dots,\pm i_s,\pm j_1,\pm j_2,\dots,\pm j_t,m\}$, with operation modulo $2m$, where $0<i_1<i_2<\dots<i_s~(<m)$ are even and $0<j_1<j_2<\dots<j_t~(<m)$ are odd. Define the labelling as $f(k)=k$ for $k$ odd and $f(k)=-k$ for $k$ even. If $k$ is even, then the weight of vertex $k$ under $f$ is given by
\begin{align*}
w(k)
= & \sum_{\ell =1}^s (f(k+i_{\ell})+f(k-i_{\ell})) + \sum_{\ell=1}^t (f(k+j_{\ell})+f(k-j_{\ell}))+f(k+m)\\
= & \sum_{\ell =1}^s (-(k+i_{\ell})-(k-i_{\ell})) + \sum_{\ell=1}^t ((k+j_{\ell})+(k-j_{\ell}))+f(k+m)\\
=&2(t-s)k + f(k + m).
\end{align*}
Thus $w(k) = (2(t-s)-1)k - m$ if $m$ is even, and $w(k) = (2(t-s)+1)k + m$ if $m$ is odd. 
Since both $2(s-t)+1$ and $2(t-s)+1$ are coprime to $2m$, if $m$ is even, then $w$ induces a bijection from even elements of $\Z_{2m}$ to even elements of $\Z_{2m}$; and if $m$ is odd, then $w$ induces a bijection from even elements of $\Z_{2m}$ to odd elements of $\Z_{2m}$.
 
Similarly, when $k$ is odd, we have
\begin{align*}
w(k)
= & \sum_{\ell =1}^s ((k+i_{\ell})+(k-i_{\ell})) + \sum_{\ell=1}^t (-(k+j_{\ell})-(k-j_{\ell}))+f(k+m)\\
= & 2(s-t)k + f(k + m).
\end{align*}
Thus $w(k) = (2(s-t)+1)k + m$ if $m$ is even, and $w(k) = (2(s-t)-1)k - m$ if $m$ is odd. Similar to the above, if $m$ is even, then $w$ induces a bijection from odd elements of $\Z_{2m}$ to odd elements of $\Z_{2m}$; and if $m$ is odd, then $w$ induces a bijection from odd elements of $\Z_{2m}$ to even elements of $\Z_{2m}$. Combining this with what we proved in the previous paragraph, we conclude that $w$ is a bijection from $\Z_{2m}$ to $\Z_{2m}$, and this proves part (a).

By Corollary \ref{obs:zn-even}, if both $n$ and $|S|$ are even, then $\Cay(\Z_{n},S)$ is not $\Z_n$-distance antimagic.
\end{proof}

\section{Remarks and questions}

Many problems and questions arise from our studies. We list a few of them with no attempt to be exhaustive.
\begin{enumerate}
\item When does a Cayley graph on $\Z_2^d$ with even degree (odd degree, respectively) admit a $\Z_2^d$-distance antimagic (magic, respectively) labelling? (See Corollary \ref{cor:cay1}.)
\item Give a necessary and sufficient condition for the Hamming graph $H(d, q)$ ($d \ge 1, q \ge 3$) to be $\Z_{q^d}$-distance antimagic. (Partial results were obtained in Theorem \ref{ham}, and when $q=2$ the answer was given in Corollary \ref{thm:cube-Zn}.)
\item Prove or disprove that $Q_6$ is $\Z_2^6$-distance antimagic. (See Remark \ref{rem:Q6}.) 
\item Give a necessary and sufficient condition for $C_{n_1} \Box \cdots \Box C_{n_d}$ to be $\Z_{n_1 \cdots n_d}$-distance antimagic when $d$ is not coprime to $n_1 \cdots n_d$. (A necessary condition is that all $n_i$'s must be odd. See Corollary \ref{cor:bal} for the case when $d$ and $n_1 \cdots n_d$ are coprime.)
\item Give a necessary and sufficient condition for a circulant graph on $n$ vertices to be $\Z_n$-distance antimagic. (See Section \ref{sec:circulant}.)
\end{enumerate}

\medskip

\noindent \textbf{Acknowledgements}~~Cichacz was partially supported by the Polish Ministry of Science and Higher Education. Zhou was supported by the Australian Research Council (FT110100629).

\small{

}

\end{document}